\providecommand{\arxiv}[2][]{\href{http://www.arXiv.org/abs/#2}{arXiv:#2}}
\renewcommand{\geq}{\geqslant}
\renewcommand{\leq}{\leqslant}
\chardef \atcode = \the \catcode `\@
\newenvironment{rajout}{}{}
\newcommand{\br}{\begin{rajout}}
\newcommand{\er}{\end{rajout}}
\newenvironment{mycomment}{\footnotesize \bf Comments: }{}
\newcommand{\bc}{\begin{mycomment}}
\newcommand{\ec}{\end{mycomment}}
\numberwithin{equation}{section}
\newtheorem{theorem}{Theorem}[section]
\newtheorem{lemma}[theorem]{Lemma}
\newtheorem{proposition}[theorem]{Proposition}
\newtheorem{corollary}[theorem]{Corollary}
\theoremstyle{definition}
\theoremstyle{remark}
\newtheorem{remark}[theorem]{Remark}
\newcommand{\closure}[1]{\operatorname{cl}#1}
\newcommand{\tho}{d_T}%
\newcommand{\hil}{d_H}%
\newcommand{\C}{\mathscr{C}}
\newcommand{\A}{\mathcal{A}}
\newcommand{\R}{\mathbb{R}}
\newcommand{\N}{\mathbb{N}}
\newcommand{\Z}{\mathbb{Z}}
\newcommand{\leqc}{\leq_{C}}
\newcommand{\NEW}[1]{{\em #1}\index{#1}}
\newcommand{\NEWINDEX}[2]{{\em #1}\index{#1#2}}
\newcommand{\interior}{\mrm{int}\,}
\newcommand{\Cint}{\interior C}
\newcommand{\mrm}[1]{\text{\rm #1}}
\newcommand{\set}[2]{\{#1\mid\,#2\}}
\newcommand{\con}{\operatorname{conv}}
\newcommand{\PF}{\mrm{(F)}}
\newcommand{\id}{\mrm{Id}}
\newcommand{\seq}[2]{\langle #1\mid #2\rangle}
\newcommand{\stf}[2]{M(#2\, /\,#1)}
\newcommand{\sbf}[2]{m(#2\, /\,#1)}
\def\M(#1/#2){M(#1\,/\,#2)}
\def\m(#1/#2){m(#1\,/\,#2)}
\long\def\tlist@if@empty@nTF #1{%
\expandafter\ifx\expandafter\\\detokenize{#1}\\%
\expandafter\@firstoftwo
\else
\expandafter\@secondoftwo
\fi
}
\newcommand{\norm}[2][]{%
\|#2\|\tlist@if@empty@nTF{#1}{}{_{#1}}%
}
\newcommand{\bonsall}[2][]{%
\tilde{r}(#2
\tlist@if@empty@nTF{#1}{}{\,,#1}%
)
}
\newcommand{\cw}[2][]{
\operatorname{cw}%
(#2
\tlist@if@empty@nTF{#1}{}{\,,#1}%
)
}
\newcommand{\cspr}[2][]{
r(#2
\tlist@if@empty@nTF{#1}{}{\,,#1}%
)
}
\newcommand{\eigenvalspr}[2][]{
\hat{r}(#2
\tlist@if@empty@nTF{#1}{}{\,,#1}%
)
}
\newcommand{\cczech}[2][]{
\check{r}(#2
\tlist@if@empty@nTF{#1}{}{\,,#1}%
)
}
\newcommand{\mesname}{\nu}
\newcommand{\mes}[2][]{
\mesname(#2
\tlist@if@empty@nTF{#1}{}{\,,#1}%
)
}
\newcommand{\essp}[2][]{
\rho(#2
\tlist@if@empty@nTF{#1}{}{\,,#1}%
)
}
\newcommand{\esspalt}[2][]{
\tilde{\rho}(#2
\tlist@if@empty@nTF{#1}{}{\,,#1}%
)
}
\newcommand{\darbo}{\alpha}
\newcommand{\contract}[2][]{
\darbo(#2
\tlist@if@empty@nTF{#1}{}{\,,#1}%
)
}
\newcommand{\trn}{\interleave}
\def\laba#1{{\rm (\@alph {#1})}}
\begin{document}
\title[A Collatz-Wielandt characterization of the spectral radius]{A Collatz-Wielandt characterization of the spectral radius of order-preserving homogeneous maps on cones}
\author{Marianne Akian}
\address{Marianne Akian, INRIA \& Centre de
Math\'ematiques Appliqu\'ees (CMAP), UMR 7641 CNRS,
\'Ecole Polytechnique, 91128
Palaiseau, France.}
\email{marianne.akian@inria.fr}
\author{St\'ephane Gaubert}
\address{St\'ephane Gaubert, INRIA \& Centre de
Math\'ematiques Appliqu\'ees (CMAP), UMR 7641 CNRS,
\'Ecole Polytechnique, 91128
Palaiseau, France.}
\email{stephane.gaubert@inria.fr}
\author{Roger Nussbaum}
\address{Roger Nussbaum, Mathematics Department, Hill Center,
Rutgers University, 110 Frelinghuysen Road,
Piscataway, New Jersey, U.S.A. 08854-8019}
\email{nussbaum@math.rutgers.edu}
\date{\today}
\subjclass[2000]{Primary 47H07; Secondary 47H09, 47H10, 47J10}
\keywords{Nonlinear eigenvector, spectral radius, measures of non-compactness, $k$-set-contractions, Hilbert's metric, Thompson's metric, nonexpansive maps, AM-space with unit.}
\thanks{The first two authors were partially supported by the PGMO Programme of FMJH and EDF, and by the programme ``Ing{\'e}nierie Num{\'e}rique \& S{\'e}curit{\'e}'' of the French National Agency of Research, project ``MALTHY'', number ANR-13-INSE-0003. 
The third author was partially supported by NSFDMS 0701171 and by NSFDMS 1201328.}

\begin{abstract}
Several notions of spectral radius arise in the study of nonlinear
order-preserving positively homogeneous self-maps of cones in Banach
spaces.  We give conditions that guarantee that all these notions lead to
the same value. In particular, we give a Collatz-Wielandt type
formula, which characterizes the growth rate of the orbits in terms of
eigenvectors in the closed cone or super-eigenvectors in the interior
of the cone. This characterization holds when the cone is normal and
when a quasi-compactness condition, involving an essential spectral
radius defined in terms of $k$-set-contractions, is satisfied.
Some fixed point theorems for non-linear maps on cones are derived as intermediate results. We finally apply these results to show that non-linear spectral radii commute with respect to suprema and infima of families of order preserving maps satisfying selection properties.
\end{abstract}
\maketitle

\section{Introduction}
Non-linear self-maps of a Banach space preserving the order induced by a 
(closed, convex, and pointed) cone 
arise in a number of fields,
including population dynamics~\cite{perthame},
entropy maximization and scaling problems~\cite{menonSchneider69,borweinlewisnussbaum}, renormalization operators and fractal diffusions~\cite{sabot,metz,LinsNus08}, mathematical economy~\cite{morishima}, 
mathematical biology~\cite{angeli08}, optimal filtering and optimal control~\cite{bougerol}, and zero-sum games~\cite{kolokoltsov92,rosenbergsorin,neymansurv}, the latter being related with tropical geometry~\cite{AGGut10}. 
They turn out to share many of the 
features of nonnegative matrices, as shown by
a number of works establishing non-linear
analogues of classical results of Perron-Frobenius theory,
including~\cite{krut48,birkhoff57,birkhoff62,hopf,krasnoselskii,birkhoff67,potter,bushell73,bushell86,nussbaum88,nussbaummemoir89,krause,nussbaumlunelmemoir,arxiv1,AGLN,hujiang10a,hujiang10b,GV10}. See~\cite{lemmensnussbaum} for a recent overview.

A classical problem, for a map $f$ leaving invariant a subset $D$ of a Banach space $X$, is  to characterize the (maximal) \NEW{growth rate} of the orbits of $f$:
\[
\cspr[D]{f}:=\sup_{x\in D}\limsup_{k\to\infty} \|f^k(x)\|^{1/k}  
\enspace, 
\]
where $f^k:=f\circ\dots\circ f$ denotes the $k$th iterate of $f$.

When $f$ is \NEW{positively homogeneous} (meaning that $f$ commutes with the 
product with a positive constant), and when $f$ preserves
the order induced by a cone $C$, we may look for non-linear
\NEWINDEX{sub}{eigenvector!sub} and \NEWINDEX{super-eigenvectors}{eigenvector!super}, $u,v\in C\setminus\{0\}$, satisfying respectively
\begin{align}\label{sp-e-dualeigen}
f(u)\leq \lambda u \enspace,\qquad\qquad
f(v)\geq \mu v\enspace, 
\end{align}
where $\lambda,\mu>0$; see Sections~\ref{sp-sec-intro-hilbert} and~\ref{sp-sec-spec} for more background.
In particular, if the cone is \NEWINDEX{normal}{normal!cone}, it is easily verified that if $x$ is such that 
\[a v\leq x\leq b u
\]
for some positive constants $a$ and $b$,
then 
\begin{align}\label{e-2bound}
\mu \leq \limsup_{k\to\infty} \|f^k(x)\|^{1/k}   \leq \lambda  
\enspace .
\end{align}
Furthermore, a simple argument shows that the first inequality 
in~\eqref{e-2bound}
holds even if the cone is not normal. 
See Lemma~2.2 in~\cite{LN13}. 

A fortiori, the same conclusion persists if we require $u$ or $v$
to be eigenvectors, i.e., to satisfy the equality in~\eqref{sp-e-dualeigen}.
It is natural to ask whether the bounds obtained in this way are tight.
The main result of this paper shows that the answer is positive,
under rather general circumstances.
\begin{theorem}[Generalized Collatz-Wielandt Theorem]\label{sp-th-gencw}
Let $C$ be a normal cone with non-empty interior in a Banach space,
and let $f$ be a positively homogeneous self-map of $C$ that
preserves the partial order induced by $C$. Suppose that $f$ is uniformly continuous on every bounded subset of $C$.
Then, we have
\begin{align}
\cspr[C]{f}
&= \inf\set{\lambda >0}{\exists u\in\interior C,\; f(u)\leq \lambda u} \label{sp-e-init2} \enspace .
\end{align}
If, in addition, the cone essential spectral radius of $f$ is strictly less
than $\cspr[C]{f}$, we also have
\begin{align}
\cspr[C]{f}
 &=
 \max\set{\mu\geq 0 }{\exists v\in C\setminus\{0\},\; f(v)= \mu v}\label{sp-e-init} \enspace .
\end{align}
\end{theorem}
The notion of \NEW{cone essential spectral radius} 
appearing in Theorem~\ref{sp-th-gencw} is a non-linear extension of the notion
of essential spectral radius of linear maps
introduced in~\cite{nussbaum70}.
It is defined using generalized measures of non-compactness and 
$k$-set-contractions
(Section~\ref{sp-sec-spec}).
Note that when some iterate of the map $f$ is compact, the cone essential spectral radius of $f$ is zero.
Then, the assumption of the theorem is satisfied as soon as $\cspr[C]{f}>0$.

Theorem~\ref{sp-th-gencw} follows from Theorem~\ref{sp-supeigenth0} below. 
Note the lack of symmetry between~\eqref{sp-e-init} and~\eqref{sp-e-init2}:
we use the notation max in~\eqref{sp-e-init} to indicate that the set has
a maximum element, whereas the infimum in~\eqref{sp-e-init2} is not attained
in general. Thus, this theorem states in particular
that there exists $v\in C\setminus\{0\}$ such that $f(v)=\cspr[C]{f}v$.
In other words, the maximal growth rate of the orbits
coincides with the maximal non-linear eigenvalue of $f$ associated
to an eigenvector in the (closed) cone $C$.

The classical theorem of Collatz and Wielandt concerns the case
in which $X=\R^n$, $C=\R_+^n$ and $f$ is linear. Nussbaum extended
this result in~\cite[Th.~3.1]{nussbaum86} to non-linear continuous self-maps of $\R_+^n$ preserving the partial order induced by the cone $\R_+^n$. 
Theorem~\ref{sp-th-gencw} should also be compared with a result of
Mallet-Paret and Nussbaum, who showed
that under the assumption
that the cone essential spectral radius of $f$ is strictly less
than $\cspr[C]{f}$, 
and $C$ is a closed, proper cone (not necessarily normal),
there exists an integer $m$ and a vector $v$ such
that $f^m(v)=(\cspr[C]{f})^m v$, see~\cite[Th.~3.1]{Nuss-Mallet}.
An interest of a Collatz-Wielandt type characterization
lies in its {\em strong duality} nature: 
the eigenvector $v\in C$ and the super-eigenvector $u\in \operatorname{int}C$
allow one to bound the growth rate from above and from below.

In the course of proving Theorem~\ref{sp-th-gencw}, we establish several
non-linear fixed-point results of independent interest. 
In particular, Theorem~\ref{sp-th-add1-new}
which completes the previously mentioned result of~\cite{Nuss-Mallet},
shows that under the same compactness
assumption (on the cone essential spectral radius),  and when the cone ordering
of the Banach space $X$ induces a lattice, but the cone is not necessarily 
normal, there exists a vector $v\in C\setminus\{0\}$
such that $f(v)\geq \cspr[C]{f}v$. 
Corollary~\ref{sp-proper-fix} gives further assumptions under which $v$ can be chosen so that the equality holds. 

In Section~\ref{sp-sec-X}, we consider
the situation in which the map $f$ is defined over the whole Banach
space and $C$ is reproducing, meaning that $X=C-C$.
Then, we show that the growth rate of the orbit of a vector $x$
is maximal when $x\in C$ or $x\in -C$, i.e.,
\begin{align}\label{sp-e-morphism}
\cspr[X]{f} = \max(\cspr[C]{f},\cspr[-C]{f}) \enspace .
\end{align}
We also show (Theorem~\ref{sp-prop-add1}) that, again under a compactness assumption, $0$ is the unique fixed point of $f$ in $X$ if and only if $\cspr[X]{f}<1$.
In other words, the uniqueness of the fixed point implies every orbit
converges to this fixed point with a geometric rate.
This result can be used in combination with the ones
of the companion article~\cite{AGN}, which gives
sufficient conditions to check the uniqueness of the fixed point of 
a semidifferentiable nonexpansive mapping.

As a consequence, we show
(Proposition~\ref{sp-lem-add3} and~\ref{sp-lem-inf}) that the spectral radius commutes
with suprema or infima of families of maps satisfying selection
properties. This is motivated by zero-sum games, in which Shapley
operators can be defined as infima or suprema of families
of more elementary operators. 
The present results are applicable to several problems
arising in this context.
A first application, to characterize
the convergence rate of value iteration,
has appeared in~\cite{mariannemtns}. A second application,
to derive complexity bounds for policy
iteration for zero-sum two player games,
has recently appeared in~\cite{marianne102013}. 
We refer to these references,
and also to the last section of~\cite{AGN}, for more
information on the application to games. 

We finally note that in work~\cite{nussbaummp10} which
postdates this paper, Mallet-Paret
and Nussbaum pointed out inadequacies with the
notion of cone essential spectral radius which is used
here and in~\cite{Nuss-Mallet}. 
Generalization of the present results to the setting
of~\cite{nussbaummp10} will be considered in a future
work.

The paper is organized as follows. In Sections~\ref{sp-sec-intro-hilbert}
and~\ref{sp-sec-spec}, we recall notions and results about cones,
non-linear spectral radii, and $k$-set-contractions.
In Section~\ref{sec-fredholm}, we establish auxiliary
results concerning a non-linear Fredholm-type property.
Our main results are established in Section~\ref{sp-sec-existence}--\ref{sp-sec-CW}. We apply some of them in Section~\ref{sp-spec-inf} to
establish the ``morphism'' properties of the spectral radius with
respect to the operations of supremum and infimum.

\section{Preliminary results about Hilbert's and Thompson's metric}
\label{sp-sec-intro-hilbert}
We next recall classical notions
about cones.
See \cite[Chapter 1]{nussbaum88} and \cite[Section 1]{nussbaum94}
for more background.

A subset $C$ of a real vector space $X$ 
is called a \NEW{cone} (with vertex 0) if 
$tC:=\set{tx}{x\in C} \subset C$ for all $t\geq 0$. 
If $f$ is a map from a cone $C$ of a vector space $X$ to a cone
$C'$ of a vector space $Y$,
we shall say that $f$ is (positively) \NEWINDEX{homogeneous}{!(positively)} (of degree $1$)
if $f(t y)=t f(y)$, for all $t>0$ and $y\in C$.
We say that the cone $C$ is \NEW{pointed}
if $C \cap (-C) = \{0\}$.
A convex pointed cone $C$ of $X$
induces on $X$ a partial ordering $\leqc$, which
is defined by $x\leqc y$ iff $y - x \in C$. 
If the choice of $C$ is obvious, we shall write $\leq$ instead of $\leqc$.
When $X$ is a topological vector space, we say that
$C$ is \NEW{proper} if it is closed convex and pointed.
Note that in \cite{nussbaum88}, 
a {\em cone} is by definition what we call here a proper cone.
We next recall the definition of Hilbert's\index{metric!Hilbert} and Thompson's
metrics \index{metric!Thompson} associated to a proper cone $C$ of a topological vector space $X$.

Let $x\in C\setminus\{0\}$ and $y\in X$. 
We define
$\M(y/x)$ by
\begin{align}\label{sp-def-M}
\M(y/x) := \inf \set{b \in \R}{ y \leq bx }\enspace ,
\end{align}
where the infimum of the empty set is by definition equal to
$+\infty$. %
Similarly, %
we define $\m(y/x)$ by
\begin{align}\label{sp-def-m}
\m(y/x) := \sup \set{a \in \R}{ax \leq y }\enspace,
\end{align}
where the supremum of the empty set is by definition equal to
$-\infty$. %
We have $\m(y/x)=-\M(-y/x)$ and if in addition 
$y\in C\setminus\{0\}$, $\m(y/x)=1/ \M(x/y)$ (with the convention
$1/(+\infty)=0$).
Since $C$ is pointed and closed, we have $\M(y/x)\in \R\cup\{+\infty\}$, and
$y \leq \M(y/x) x$ as soon as $\M(y/x)<+\infty$. %
Symmetrically $m(y/x)\in \R\cup\{-\infty\}$ and $\m(y/x) x \leq y$, as soon as 
$m(y/x)>-\infty$. %

We shall say that two elements $x$ and $y$ in $C$ are \NEW{comparable}
and write $x \sim_C y$  or $x \sim y$ 
if there exist positive constants $a > 0$ and $b > 0$ such that
$ax \leq y \leq bx$. 
If $x,y \in C\setminus \{0\}$ are comparable,
we define
\begin{align*}
\hil(x,y) = &  \log \M(y/x) -\log \m(y/x) \enspace ,%
\\
\tho(x,y) = &  \log \M(y/x) \vee (-\log \m(y/x))\enspace ,
\end{align*}
where we use the notation $a\vee b=\max(a,b)$.
We adopt the convention that $\hil(0,0)=\tho(0,0)=0$.
If $u \in C$, the set of elements comparable with $u$,
\[
C_u := \{x \in C \mid x \sim u\}\enspace ,
\]
is called a \NEW{part of $C$}.
If $C$ has nonempty interior $\Cint$, and $u \in \Cint$, 
then $C_u = \Cint$. 
In general, $C_u \cup \{0\}$ is a pointed convex cone
but $C_u \cup \{0\}$ is not closed. 
The map $\tho$ %
is a metric on $C_u$,
called \NEW{Thompson's metric}.
The map $\hil$ %
is called the \NEW{Hilbert projective metric}
on $C_u$. The term ``projective metric''
is justified by the following properties:
for all $x,y,z \in C_u$, $\hil(x,z) \leq \hil(x,y) + \hil(y,z)$, 
$\hil(x,y) = \hil(y,x)\geq 0$ and $\hil(x,y) = 0$
iff $y = \lambda x$ for some $\lambda > 0$.

{From} now on, we will assume that $X=(X,\norm{\cdot})$ is a Banach space.
We denote by $X^*$ the space of continuous linear forms over $X$,
and by $C^*:=\set{\psi\in X^*}{\psi(x)\geq 0\; \forall x\in C}$ 
the \NEW{dual cone} of $C$.
If $f$ is a map between two ordered sets $(D,\leq)$ and $(D',\leq)$, 
we shall say that $f$ is \NEW{order-preserving} if $f(x)\leq f(y)$ for all   
$x,y\in D$ such that $x\leq y$. Then, any element of $C^*$ is a
homogeneous and order-preserving  map from $(X,\leq_C)$ to $[0,+\infty)$.
If the cone $C$ is proper, the Hahn-Banach theorem implies 
that for all $u\in C\setminus\{0\}$, 
there exists $\psi\in C^*$ such that $\psi(u)>0$.
For such a $\psi$, we have $\psi(x)>0$ for all $x\in C_u$.
More generally, if $q:C_u\to (0,+\infty)$ is homogeneous and
order-preserving, we shall write \index{$\Sigma_u$}
\[\Sigma_u=\set{x\in C_u}{ q(x)=q(u)}
\enspace .
\]
Then, $\hil$ and $\tho$ are equivalent metrics on $\Sigma_u$.
Indeed, as shown in~\cite[Remark~1.3, p. 15]{nussbaum88}:
\begin{align}
\frac 1 2 \hil(x,y)\leq \tho(x,y) \leq \hil(x,y),\; \forall x,y\in \Sigma_u
\enspace.
\label{sp-e-equivm}
\end{align}
More precisely:
\begin{align}
1\leq \M(y/x) \leq e^{\hil(x,y)}\quad \forall x,y\in \Sigma_u\enspace.
\label{sp-mleqd}
\end{align}
To see this, let us apply $q$ to the inequality $y\leq \M(y/x)\; x$.
Using that $q$ is order-preserving and homogeneous, and
that $q(x)=q(y)$, we get $\M(y/x)\geq 1$.
By symmetry, $\M(x/y)\geq 1$, hence 
$\log \M(y/x)= \hil(x,y)-\log\M(x/y)\leq \hil(x,y)$.

We say that a cone $C$ is 
\NEW{normal} if $C$ is proper and 
there exists a constant $M$ such that $\norm{x} \leq M \norm{y}$ 
whenever $0 \leq x \leq y$. Every proper cone $C$ in a
finite dimensional Banach space $(X,\norm{\cdot})$ is necessarily normal.
We shall need the following result of Thompson.
\begin{proposition}[{\cite[Lemma~3]{thompson}}]\label{sp-prop-thompson}
Let $C$ be a normal cone in a Banach space $(X,\norm{\cdot})$.
For all $u\in C\setminus\{0\}$, $(C_u,\tho)$ 
is a complete metric space.
\end{proposition}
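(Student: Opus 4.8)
The metric axioms for $\tho$ are already recorded in the text, so the plan is to prove only completeness, and the whole argument will rest on a single translation between the metric and the order: for $x,y\in C_u$ and $\epsilon\ge 0$ one has $\tho(x,y)\le \epsilon$ if and only if $e^{-\epsilon}x\le y\le e^{\epsilon}x$. This is immediate from $\tho(x,y)=\log\M(y/x)\vee(-\log\m(y/x))$ together with the facts $y\le \M(y/x)\,x$ and $\m(y/x)\,x\le y$ recorded above. I would state this equivalence first and invoke it at every step.

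First I would show that a $\tho$-Cauchy sequence $(x_n)$ in $C_u$ is Cauchy for the norm; this is the place where normality is genuinely used, and it is the main obstacle. Applying the equivalence with $\epsilon=1$, fix $n_0$ with $\tho(x_m,x_{n_0})\le 1$ for all $m\ge n_0$; then $0\le x_m\le e\,x_{n_0}$, so normality yields a uniform bound $\norm{x_m}\le B:=Me\norm{x_{n_0}}$. For the Cauchy estimate, given $\epsilon>0$ choose $N\ge n_0$ with $e^{-\epsilon}x_n\le x_m\le e^{\epsilon}x_n$ for $m,n\ge N$. The one trick is that $x_m-x_n$ need not lie in $C$, so I would \emph{not} apply normality to it directly; instead I use the shifted sandwich
\[
0\le x_m-e^{-\epsilon}x_n\le (e^{\epsilon}-e^{-\epsilon})x_n ,
\]
whence normality gives $\norm{x_m-e^{-\epsilon}x_n}\le M(e^{\epsilon}-e^{-\epsilon})\norm{x_n}$, and a triangle inequality absorbing $\norm{e^{-\epsilon}x_n-x_n}=(1-e^{-\epsilon})\norm{x_n}$ produces
\[
\norm{x_m-x_n}\le \bigl(M(e^{\epsilon}-e^{-\epsilon})+(1-e^{-\epsilon})\bigr)B .
\]
Since the bracketed factor tends to $0$ as $\epsilon\to 0$, the sequence is norm-Cauchy; as $X$ is complete and $C$ is closed, $x_n\to x$ for some $x\in C$.

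It then remains to place the limit in $C_u$ and to upgrade norm-convergence to $\tho$-convergence, both soft consequences of closedness. Passing to the limit $m\to\infty$ in $e^{-1}x_{n_0}\le x_m\le e\,x_{n_0}$ and using that the order $\leqc$ is closed (because $C$ is closed) gives $e^{-1}x_{n_0}\le x\le e\,x_{n_0}$, so $x\sim x_{n_0}\sim u$; pointedness of $C$ forbids $x=0$, hence $x\in C_u$ and $\tho(x_n,x)$ is well defined. For convergence, fix $\epsilon>0$ and $N$ as above, and for each fixed $n\ge N$ let $m\to\infty$ in $e^{-\epsilon}x_n\le x_m\le e^{\epsilon}x_n$, again using closedness of the order, to obtain $e^{-\epsilon}x_n\le x\le e^{\epsilon}x_n$. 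By the reformulation of the first paragraph this is exactly $\tho(x_n,x)\le\epsilon$ for all $n\ge N$, so $x_n\to x$ in $(C_u,\tho)$ and the space is complete.

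I expect the transition from a $\tho$-Cauchy sequence to a norm-Cauchy one to be the crux: it requires both the prior uniform norm bound and the shifted sandwich, since the naive difference $x_m-x_n$ is not order-comparable to a multiple of $x_n$ and so normality cannot be applied to it as it stands. Everything after the existence of the norm limit $x$ follows formally from the closedness of $C$ and the order-theoretic description of $\tho$.
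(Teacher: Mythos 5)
Your proof is correct; the paper itself gives no argument here but simply cites Thompson's Lemma~3, and your proof is essentially that standard argument: translate $\tho(x,y)\le\epsilon$ into the order sandwich $e^{-\epsilon}x\le y\le e^{\epsilon}x$, use normality (via the shifted difference $x_m-e^{-\epsilon}x_n\in C$) to show a $\tho$-Cauchy sequence is norm-Cauchy, and then use completeness of $X$ together with closedness of $C$ to identify the limit, place it in $C_u$, and upgrade to $\tho$-convergence. All steps check out, including the use of $y\le \M(y/x)\,x$ (valid since $C$ is closed) needed for your metric--order equivalence.
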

The next proposition follows from a general result
of Zabre{\u\i}ko, Krasnosel{$'$}ski{\u\i} and Pokorny{\u\i}~%
\cite{zabreiko} (see~\cite[Theorem~1.2 and Remarks~1.1 and 1.3]{nussbaum88}
and a previous result of Birkhoff~\cite{birkhoff62}).
When $q\in C^*$, it follows from 
Proposition~\ref{sp-prop-thompson}, together with Eqn~\eqref{sp-e-equivm} and the property that
$\Sigma_u$ is closed in the topology of the Thompson's metric $\tho$.
\begin{proposition}\label{sp-prop-birkhoff}
Let $C$ be a normal cone in a Banach space $(X,\norm{\cdot})$.
Let $u\in C\setminus\{0\}$ and let $q:C_u\to (0,+\infty)$,
be homogeneous and order-preserving with respect to $C$. 
Define $\Sigma_u=\set{x\in C_u}{ q(x)=1}$. Then,  
$(\Sigma_u,d)$ and $(\Sigma_u,\tho)$ are complete metric spaces. 
\end{proposition}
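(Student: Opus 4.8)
The plan is to deduce both completeness statements from Proposition~\ref{sp-prop-thompson}, which already gives that $(C_u,\tho)$ is a complete metric space. Since a closed subset of a complete metric space is itself complete, it suffices to prove that $\Sigma_u$ is closed in $(C_u,\tho)$; completeness of $(\Sigma_u,\tho)$ is then immediate. For the Hilbert metric I would not argue directly, but rather transfer completeness through the bi-Lipschitz equivalence~\eqref{sp-e-equivm}, which holds on $\Sigma_u$. (Here I normalize $q$ so that $q(u)=1$, so that the set $\Sigma_u=\set{x\in C_u}{q(x)=1}$ matches the one on which~\eqref{sp-e-equivm} was established.)

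The crux is therefore the closedness of $\Sigma_u$ for the Thompson metric. Let $x_n\in\Sigma_u$ with $\tho(x_n,x)\to 0$ for some $x\in C_u$; I must check that $q(x)=1$. Since $x_n$ and $x$ are comparable, the quantities $\M(x_n/x)$ and $\m(x_n/x)$ are finite and positive, the sandwiching inequalities $\m(x_n/x)\,x\le x_n\le \M(x_n/x)\,x$ hold, and, using $\m(x_n/x)\le \M(x_n/x)$, the convergence $\tho(x_n,x)\to 0$ forces $\M(x_n/x)\to 1$ and $\m(x_n/x)\to 1$. Applying the order-preserving and homogeneous map $q$ to each side of the sandwiching inequalities and using $q(x_n)=1$ gives
\[
\m(x_n/x)\,q(x)\le 1\le \M(x_n/x)\,q(x)\enspace .
\]
Letting $n\to\infty$ forces $q(x)\le 1$ and $q(x)\ge 1$, hence $q(x)=1$ and $x\in\Sigma_u$. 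This shows that $\Sigma_u$ is $\tho$-closed, whence $(\Sigma_u,\tho)$ is complete.

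For the Hilbert metric I would then observe that~\eqref{sp-e-equivm} makes $\hil$ and $\tho$ bi-Lipschitz equivalent on $\Sigma_u$: a sequence is $\hil$-Cauchy if and only if it is $\tho$-Cauchy, and $\hil$-convergence to a point is equivalent to $\tho$-convergence to the same point. Completeness of $(\Sigma_u,\tho)$ therefore transfers verbatim to $(\Sigma_u,\hil)$, which finishes the proof.

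The step I expect to be the main obstacle is the closedness argument, precisely because $q$ is only assumed homogeneous and order-preserving, not continuous; one cannot simply exhibit $\Sigma_u$ as the preimage of $\{1\}$ under a continuous map. The key point is that convergence in Thompson's metric is strong enough to produce the two-sided order bounds $\m(x_n/x)\,x\le x_n\le \M(x_n/x)\,x$ with both multipliers tending to $1$, and it is exactly the monotonicity and homogeneity of $q$ applied to these bounds that pin down $q(x)=1$ with no continuity hypothesis.
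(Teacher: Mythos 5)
Your argument is correct, and it is essentially the route the paper itself indicates, but carried out in full detail and in greater generality. The paper's own justification is two-fold: for a general homogeneous order-preserving $q$ it simply cites a theorem of Zabre{\u\i}ko, Krasnosel{$'$}ski{\u\i} and Pokorny{\u\i}~\cite{zabreiko}, and only for the linear case $q\in C^*$ does it sketch the argument you give --- completeness of $(C_u,\tho)$ from Proposition~\ref{sp-prop-thompson}, closedness of $\Sigma_u$ in the $\tho$-topology, and transfer to $\hil$ via~\eqref{sp-e-equivm} --- without writing out the closedness step. Your contribution is precisely to supply that step for an arbitrary $q$: the observation that $\tho(x_n,x)\to 0$ together with $\m(x_n/x)\leq \M(x_n/x)$ forces both quantities to tend to $1$, and that sandwiching $x_n$ between $\m(x_n/x)\,x$ and $\M(x_n/x)\,x$ and applying the monotone homogeneous $q$ pins down $q(x)=1$ with no continuity hypothesis on $q$. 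This makes the proof self-contained where the paper defers to the literature. The only point worth flagging, which you already address by normalizing $q(u)=1$, is that~\eqref{sp-e-equivm} is stated for the level set $\set{x\in C_u}{q(x)=q(u)}$; since its derivation uses only $q(x)=q(y)$ together with homogeneity and monotonicity of $q$, it applies verbatim on $\set{x\in C_u}{q(x)=1}$, so the transfer of completeness from $\tho$ to $\hil$ goes through as you describe.
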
 
Given $u \in C\setminus \{0\}$, we define the linear space 
\[X_u=\set{x \in X}{\exists a > 0,\; -au \leq x \leq au}
\enspace .
\]
Let $M$ and $m$ be defined as in~\eqref{sp-def-M} and \eqref{sp-def-m}.
We equip $X_u$ with the norm:
\begin{equation}\label{sp-eq6}
\norm[u]{x} = \stf{u}{x}\vee (-\sbf{u}{x}) = 
\inf \set{a > 0}{- au \leq x \leq au} \enspace .
\end{equation}

\begin{proposition}[{\cite[Proposition~1.1]{nussbaum94}}]\label{sp-prop-normequiv}
Let $C$ be a normal cone of nonempty interior
in a Banach space $(X, \norm{\cdot})$.
If $u\in \Cint$,
then
$X_u = X$ and $\norm{\cdot}$ and $\norm[u]{\cdot}$ are equivalent norms on $X$.
\end{proposition}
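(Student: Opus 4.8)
The plan is to prove the two norm inequalities separately, obtaining $X_u=X$ as a byproduct of the first. It is convenient to rewrite the definition: since $\sbf{u}{x}=-\stf{u}{-x}$ by the identity $\m(y/x)=-\M(-y/x)$ recorded above, we have $\norm[u]{x}=\stf{u}{x}\vee(-\sbf{u}{x})=\stf{u}{x}\vee\stf{u}{-x}$, where $a\vee b=\max(a,b)$. Thus $-au\leq x\leq au$ holds exactly when $a\geq\norm[u]{x}$, and $\norm[u]{x}$ is the least such $a$.

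First I would exploit the hypothesis $u\in\Cint$. Since the interior is open, there is $r>0$ with the open ball $B(u,r)\subset C$, i.e. $u+z\in C$ whenever $\norm{z}<r$. Given any nonzero $x\in X$ and any $a>\norm{x}/r$, both vectors $\pm a^{-1}x$ have norm strictly less than $r$, so $u+a^{-1}x\in C$ and $u-a^{-1}x\in C$; multiplying by $a>0$ gives $-au\leq x$ and $x\leq au$, that is $-au\leq x\leq au$. This already shows $x\in X_u$, hence $X_u=X$, and taking the infimum over such $a$ yields $\norm[u]{x}\leq r^{-1}\norm{x}$.

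For the reverse inequality I would invoke normality. Fix $x$ and let $a>\norm[u]{x}$, so that $-au\leq x\leq au$ and therefore $0\leq x+au\leq 2au$. Normality provides a constant $M$ with $\norm{x+au}\leq M\norm{2au}=2Ma\norm{u}$, whence $\norm{x}\leq\norm{x+au}+a\norm{u}\leq(2M+1)a\norm{u}$; letting $a\downarrow\norm[u]{x}$ gives $\norm{x}\leq(2M+1)\norm{u}\,\norm[u]{x}$. Combined with the bound of the previous paragraph, $\norm{\cdot}$ and $\norm[u]{\cdot}$ are equivalent. It remains only to confirm that $\norm[u]{\cdot}$ is a genuine norm: absolute homogeneity and the triangle inequality follow from the positive homogeneity and subadditivity of $x\mapsto\stf{u}{x}$, while positive definiteness follows from pointedness and closedness of $C$, since $\norm[u]{x}=0$ forces $-\epsilon u\leq x\leq\epsilon u$ for all $\epsilon>0$ and hence, letting $\epsilon\to 0$, $x\in C\cap(-C)=\{0\}$.

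I do not expect a serious obstacle here: the whole content is to convert the topological fact that $u$ is interior into a two-sided order sandwich $-au\leq x\leq au$ via a ball lying inside $C$, and then to convert such a sandwich back into a norm estimate via normality. The step deserving the most care is the first inequality, where one must remember that $u\pm a^{-1}x\in C$ for small $a^{-1}$ encodes precisely the comparison $-au\leq x\leq au$.
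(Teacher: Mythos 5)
Your proof is correct; the paper itself gives no proof of this proposition, simply citing \cite[Proposition~1.1]{nussbaum94}, and your argument (a ball $B(u,r)\subset C$ yields the order sandwich $-au\leq x\leq au$ and hence $\norm[u]{x}\leq r^{-1}\norm{x}$, while normality applied to $0\leq x+au\leq 2au$ yields the reverse bound) is exactly the standard one. The verification that $\norm[u]{\cdot}$ is positive definite via closedness and pointedness of $C$ is also handled correctly.
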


We say that a cone $C$ in a Banach space $(X,\norm{\cdot})$ is
\NEW{reproducing} if $X=C-C:=\set{x-y}{x, y\in C}$.
The following observation is standard. We include
a proof for the convenience of the reader.
\begin{proposition}\label{sp-normalempty}
A cone $C$ in a Banach space $(X, \norm{\cdot})$
with nonempty interior is reproducing.
\end{proposition}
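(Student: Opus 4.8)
The plan is to use the nonempty interior to produce, for every $x\in X$, a decomposition $x=c_1-c_2$ with $c_1,c_2\in C$. Fix a point $u\in\Cint$ and, using that $u$ is interior, choose a radius $r>0$ such that the open ball $\set{z\in X}{\norm{z-u}<r}$ is contained in $C$. Since $0\in C$, the point $x=0$ already lies in $C-C$, so I may assume $x\neq 0$ and concentrate on representing an arbitrary nonzero $x$.

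The key idea is to perturb the interior point $u$ slightly in the direction of $x$. Set $\epsilon=r/(2\norm{x})>0$; then $\norm{\epsilon x}<r$, so $u+\epsilon x$ lies in the chosen ball and hence in $C$. Now I invoke the defining property of a cone, namely that $tC\subset C$ for all $t\geq 0$: from $u\in C$ and $u+\epsilon x\in C$ it follows that $\epsilon^{-1}u\in C$ and $\epsilon^{-1}(u+\epsilon x)\in C$, and therefore
\[
x=\epsilon^{-1}\bigl((u+\epsilon x)-u\bigr)=\epsilon^{-1}(u+\epsilon x)-\epsilon^{-1}u\in C-C\enspace .
\]
As $x$ was arbitrary, this gives $X=C-C$, which is exactly the assertion that $C$ is reproducing.

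The argument is elementary and I expect no genuine obstacle: no convexity of $C$ is needed, only its homogeneity (the cone property) together with the topological fact that an interior point admits a ball contained in $C$. The one point worth stating cleanly is that $C-C$ is itself stable under multiplication by nonnegative scalars—this follows at once from $t(c_1-c_2)=tc_1-tc_2$ with $tc_i\in C$—which is precisely what allows me to clear the factor $\epsilon$ and conclude that $x$ itself, and not merely $\epsilon x$, belongs to $C-C$.
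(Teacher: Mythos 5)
Your proof is correct and follows essentially the same argument as the paper: fix an interior point $u$, use a ball around $u$ contained in $C$ to see that $u+\epsilon x\in C$ for small $\epsilon>0$, and then clear the scalar using $tC\subset C$. The paper perturbs in both directions ($u\pm\frac{\epsilon}{\norm{x}}x$) and writes $x$ as a difference of two terms each containing $\tfrac12 x$, whereas you use only one perturbation and the decomposition $x=\epsilon^{-1}(u+\epsilon x)-\epsilon^{-1}u$; this is a cosmetic difference, and your version is if anything slightly leaner.
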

\begin{proof}
If $C$ is a cone and has nonempty interior, take $u\in \Cint$.
Then, $B_\epsilon(u)=\set{z}{\norm{z-u}\leq \epsilon}\subset C$
for some $\epsilon>0$. Consider $x\in X$. If $x=0$, $x\in C$.
If $x\neq 0$, $u\pm \frac{\epsilon}{\norm{x}} x\in C$, which
implies that $\pm\frac 1 2 x +\frac{\norm{x}}{2\epsilon}u \in C$,
and so, $x=(\frac 1 2 x +\frac{\norm{x}}{2\epsilon}u)
-(-\frac 1 2 x +\frac{\norm{x}}{2\epsilon}u) \in C-C$.
\end{proof}
We shall finally need
the following well known elementary property
(see for instance~\cite{nussbaum88,thompson,bushell73,potter}):

\begin{lemma}\label{sp-lemma-nonexpan}
Let $C$ be a proper cone and $u\in C\setminus\{0\}$.
If $f:C_u\to C$ is order-preserving and homogeneous,
then $f(C_u)\subset C_{f(u)}$ and 
$f$ is nonexpansive both in Hilbert's projective metric $\hil$ and 
in Thompson's metric $\tho$.
\end{lemma}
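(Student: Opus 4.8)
The plan is to prove the three claims of Lemma~\ref{sp-lemma-nonexpan} in the order they are stated, working directly from the definitions of $\M(\cdot/\cdot)$, $\m(\cdot/\cdot)$, and the two metrics. First I would verify the inclusion $f(C_u)\subset C_{f(u)}$. Given $x\in C_u$, by definition there exist $a,b>0$ with $ax\leq u\leq bx$ (equivalently, comparing $x$ and $u$ in either direction). Applying the order-preserving property of $f$ to these inequalities and using homogeneity to pull the positive scalars out, namely $f(ax)=af(x)$, one gets $af(x)\leq f(u)\leq bf(x)$, which says precisely that $f(x)\sim_C f(u)$, i.e.\ $f(x)\in C_{f(u)}$. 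One should note here that $f(u)\neq 0$: since $u\in C_u$ we have $u\sim_C u$, so $f(u)\sim_C f(u)$ forces $f(u)$ into a genuine part, and the comparability $af(x)\le f(u)\le bf(x)$ with $x\in C\setminus\{0\}$ prevents degeneration.

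The core of the argument is the nonexpansiveness, and the key mechanism is a monotonicity property of $\M$ and $\m$ under $f$. The plan is to show that for comparable $x,y\in C_u\setminus\{0\}$,
\begin{align*}
\M(f(y)/f(x)) \leq \M(y/x),\qquad \m(f(y)/f(x)) \geq \m(y/x)\enspace .
\end{align*}
To see the first inequality, set $b=\M(y/x)$; since $C$ is proper (hence closed), we have $y\leq bx$. Applying $f$ and using order-preservation and homogeneity gives $f(y)\leq f(bx)=bf(x)$, so $b$ is an admissible bound in the infimum defining $\M(f(y)/f(x))$, whence $\M(f(y)/f(x))\leq b=\M(y/x)$. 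The inequality for $\m$ is dual: with $a=\m(y/x)$ we have $ax\leq y$, so $af(x)\leq f(y)$ and $a$ is admissible in the supremum defining $\m(f(y)/f(x))$.

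Granting these two monotonicity estimates, both nonexpansiveness statements follow by taking logarithms. For Thompson's metric,
\begin{align*}
\tho(f(x),f(y)) &= \log\M(f(y)/f(x)) \vee (-\log\m(f(y)/f(x)))\\
&\leq \log\M(y/x)\vee(-\log\m(y/x)) = \tho(x,y)\enspace ,
\end{align*}
using that $\log$ is increasing (so $\M$ decreasing gives $\log\M$ decreasing) and that $-\log$ reverses the $\m$ inequality. For Hilbert's metric, subtracting the two monotonicity inequalities after taking logs gives $\log\M(f(y)/f(x))-\log\m(f(y)/f(x))\leq \log\M(y/x)-\log\m(y/x)$, i.e.\ $\hil(f(x),f(y))\leq\hil(x,y)$. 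The one technical point I would take care over is the handling of the infinite and boundary cases: since $x,y$ are comparable and nonzero, $\M(y/x)$ and $\m(y/x)$ are finite and strictly positive, and the inclusion just proved guarantees $f(x),f(y)\in C_{f(u)}$ are likewise comparable and nonzero, so all the quantities $\M(f(y)/f(x))$, $\m(f(y)/f(x))$ are finite and positive and the logarithms are well defined. I do not expect any serious obstacle here; the only thing to be careful about is invoking closedness of $C$ to pass from the defining infimum/supremum to the actual inequalities $y\leq\M(y/x)x$ and $\m(y/x)x\leq y$ before applying $f$, exactly as recorded in the text following~\eqref{sp-def-m}.
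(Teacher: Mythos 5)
Your argument is correct and is exactly the standard one that the paper's cited references use (the paper itself states this lemma without proof): apply $f$ to $\m(y/x)\,x\leq y\leq \M(y/x)\,x$ to get $\m(f(y)/f(x))\geq \m(y/x)$ and $\M(f(y)/f(x))\leq \M(y/x)$, then take logarithms. One small inaccuracy: your parenthetical claim that the comparability relation forces $f(u)\neq 0$ is not justified (the zero map is order-preserving and homogeneous, so $f(u)=0$ is possible); in that degenerate case $f(C_u)=\{0\}=C_{f(u)}$ and nonexpansiveness holds trivially by the convention $\hil(0,0)=\tho(0,0)=0$, so the lemma is unaffected, but the case should be dispatched explicitly rather than asserted away.
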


\section{Spectral radius notions and $k$-set-contractions}
\label{sp-sec-spec}

In this section, we recall some results of~\cite{Nuss-Mallet}
concerning spectral radii of non-linear maps
and $k$-set-contractions.

Let $C$ be a cone in a Banach space $(X, \norm{\cdot})$.
If $h$ is a homogeneous map (of degree 1)
from $C$ to a normed vector space $(Y,\norm{\cdot})$, we define:
\begin{equation}\label{sp-definormh}
\norm[C]{h}:=\sup_{x\in C\setminus\{0\}} \frac{\norm{h(x)}}{\norm{x}} 
\enspace .
\end{equation}
If $h$ is continuous at point $0$, then $\norm[C]{h}<+\infty$.
Indeed, since $h(0)=0$, by continuity of $h$,
there exists $\delta>0$ such that $\norm{h(x)}\leq 1$ for all 
$x\in C$ such that $\norm{x}\leq \delta$. Hence, by homogeneity of $h$,
$\norm[C]{h}\leq 1/\delta$.

Consider now a homogeneous
map $h$ from $C$ to $C$. Following~\cite{Nuss-Mallet}, we define:
\begin{subequations}%
\begin{align}
\bonsall[C]{h} & =
\lim_{k \rightarrow \infty} \norm[C]{h^k}^{1/k} 
=\inf_{k\geq 1} \norm[C]{h^k}^{1/k}\enspace ,
\label{sp-eq31-bonsallcspr}\\
\cspr[C]{h} &= \sup_{x \in C} \mu(x) \qquad \text{ where } \mu(x)= \limsup_{k \rightarrow \infty} \norm{h^k(x)}^{1/k} \enspace,\label{sp-eq31-cspr}\\
\eigenvalspr[C]{h} & = \sup\, \set{ \lambda \geq 0}{\exists x \in C \backslash \{0\}, \; h(x) = \lambda x } \enspace ,\\
\cczech[C]{h} & =  \sup_{k \geq  1}\, (\eigenvalspr[C]{h^k})^{1/k}  \enspace.
\label{sp-eq31-cczech}
\end{align}
\end{subequations}
When $C=X$, $C$ will be omitted in the previous notation. 
The equality of the limit and the infimum in~\eqref{sp-eq31-bonsallcspr}
follows from $\norm{h^{k+\ell}}\leq \norm{h^k} \norm{h^\ell}$.
The number $\bonsall[C]{h}$ is called
\NEW{Bonsall's cone spectral radius} of $h$,
$\cspr[C]{h}$ is called the \NEW{cone spectral radius} of $h$,
and $\eigenvalspr[C]{h}$ is called the \NEW{cone eigenvalue spectral radius}
of $h$. We have the following elementary inequalities.
\begin{proposition}[{\cite[Eqn (2.9) and Prop.~ 2.1]{Nuss-Mallet}}]
\label{sp-prop-easy}
If $h$ is a homogeneous self-map of a convex pointed cone in a Banach
space, then
\[
0\leq \eigenvalspr[C]{h} \leq \cczech[C]{h} \leq \cspr[C]{h} \leq 
\bonsall[C]{h}
\enspace .
\]
\end{proposition}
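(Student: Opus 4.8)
The plan is to prove the four inequalities of the chain one at a time, reading it from left to right; all are elementary consequences of the definitions~\eqref{sp-eq31-bonsallcspr}--\eqref{sp-eq31-cczech}, and the only one requiring a genuine idea is the middle inequality $\cczech[C]{h}\le\cspr[C]{h}$.

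First, $0\le\eigenvalspr[C]{h}$ holds because $\eigenvalspr[C]{h}$ is by definition the supremum of a set of nonnegative reals (with the convention $\sup\emptyset=0$, consistent with the statement). Next, $\eigenvalspr[C]{h}\le\cczech[C]{h}$ is immediate from $\cczech[C]{h}=\sup_{k\ge1}(\eigenvalspr[C]{h^k})^{1/k}$, since the term indexed by $k=1$ is exactly $\eigenvalspr[C]{h}$.

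For the middle inequality I would fix $k\ge1$ and take any eigenvalue $\lambda$ of $h^k$ in $C$, so that $h^k(x)=\lambda x$ for some $x\in C\setminus\{0\}$. Iterating and using positive homogeneity gives $h^{nk}(x)=\lambda^{n}x$ for every $n\ge1$, hence $\norm{h^{nk}(x)}^{1/(nk)}=\lambda^{1/k}\norm{x}^{1/(nk)}\to\lambda^{1/k}$ as $n\to\infty$, where $\norm{x}>0$ since $x\ne0$ (the degenerate case $\lambda=0$ gives $0\le\cspr[C]{h}$ trivially). Evaluating the $\limsup$ defining $\mu(x)$ along the subsequence $m=nk$ therefore yields $\mu(x)\ge\lambda^{1/k}$, so that $\cspr[C]{h}=\sup_{y\in C}\mu(y)\ge\lambda^{1/k}$. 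Taking the supremum over all eigenvalues $\lambda$ of $h^k$ gives $(\eigenvalspr[C]{h^k})^{1/k}\le\cspr[C]{h}$, and then the supremum over $k\ge1$ delivers $\cczech[C]{h}\le\cspr[C]{h}$.

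Finally, for $\cspr[C]{h}\le\bonsall[C]{h}$ I would use the defining inequality $\norm{h^m(x)}\le\norm[C]{h^m}\,\norm{x}$, valid for all $x\in C$ and $m\ge1$, so that $\norm{h^m(x)}^{1/m}\le\norm[C]{h^m}^{1/m}\norm{x}^{1/m}$. Since $\norm{x}^{1/m}\to1$ for $x\ne0$ and $\norm[C]{h^m}^{1/m}\to\bonsall[C]{h}$ by~\eqref{sp-eq31-bonsallcspr}, passing to the $\limsup$ gives $\mu(x)\le\bonsall[C]{h}$ (the case $x=0$ being trivial, and the case $\norm[C]{h^m}=+\infty$ making $\bonsall[C]{h}=+\infty$); the supremum over $x\in C$ then closes the chain. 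I do not anticipate any real obstacle here: the whole statement is bookkeeping with homogeneity and the submultiplicativity underlying~\eqref{sp-eq31-bonsallcspr}, the only points to watch being the degenerate values $\lambda=0$, $x=0$, and $\norm[C]{h^m}=+\infty$, each of which trivializes the inequality concerned.
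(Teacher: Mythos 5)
Your proof is correct, and each step (the $k=1$ term in the supremum defining $\cczech[C]{h}$, the subsequence $m=nk$ giving $\mu(x)\geq\lambda^{1/k}$, and the bound $\norm{h^m(x)}\leq\norm[C]{h^m}\,\norm{x}$ combined with $\norm{x}^{1/m}\to 1$) is the standard elementary argument, with the degenerate cases $\lambda=0$, $x=0$ and $\norm[C]{h^m}=+\infty$ handled properly. The paper itself gives no proof of this proposition---it is quoted from \cite[Eqn (2.9) and Prop.~2.1]{Nuss-Mallet}---so there is no internal argument to compare against; your derivation is exactly the bookkeeping one would expect from the definitions \eqref{sp-eq31-bonsallcspr}--\eqref{sp-eq31-cczech}.
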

The following result 
shows that the equality $\cspr[C]{h} = \bonsall[C]{h}$ holds
in several common situations.
\begin{theorem}[{\cite[Theorem~2.2, the subsequent remark and Theorem~2.3]{Nuss-Mallet}}]
\label{sp-prop-nus1}
Let $C$ be a proper cone in a Banach space $(X,\norm{\cdot})$. Let $h:C\to C$ be a continuous and homogeneous map.
The equality
\begin{equation}
 \cspr[C]{h}=\bonsall[C]{h}\end{equation}
holds if either $C$ is normal and $h$ is order-preserving with 
respect to $C$, or $h$ is linear
or there exists $m\geq 1$ such that $h^m$ is compact.
\end{theorem}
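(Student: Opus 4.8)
Proposition~\ref{sp-prop-easy} already gives $\cspr[C]{h}\le \bonsall[C]{h}$ in all three cases, so in each case the plan is to prove only the reverse inequality $\bonsall[C]{h}\le \cspr[C]{h}$. I would set $r:=\bonsall[C]{h}$; since $r=\inf_{k\ge1}\norm[C]{h^k}^{1/k}$ we always have $\norm[C]{h^k}\ge r^k$, and if $r=0$ the reverse inequality is trivial, so I would assume $r>0$ throughout and treat the three hypotheses separately.

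In the first case (normal cone, order-preserving $h$) the idea is to manufacture a single vector whose orbit grows at rate $r$ by superposing approximate maximizers. For each $k$ I would choose a unit vector $x_k\in C$ with $\norm{h^k(x_k)}\ge (1-\tfrac1k)r^k$ and set $x:=\sum_{k\ge1}k^{-2}x_k$. This series converges absolutely and, since a normal cone is proper, hence closed and convex, its sum lies in $C$; moreover each tail lies in $C$, so $x\ge k^{-2}x_k$ for every $k$. Applying the order-preserving homogeneous map $h^k$ gives $h^k(x)\ge k^{-2}h^k(x_k)\ge 0$, and normality (with constant $M$) yields $\norm{h^k(x)}\ge \frac{1}{Mk^2}\norm{h^k(x_k)}\ge \frac{1-1/k}{Mk^2}r^k$. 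Taking $k$-th roots, the polynomial and constant factors tend to $1$, so $\mu(x)\ge r$ and hence $\cspr[C]{h}\ge r$. This case is entirely elementary.

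In the second case (closed convex cone, linear $h$) I would argue by contradiction using the Baire category theorem on the complete metric space $C$ (closed in $X$). Assume $\cspr[C]{h}=s<r$ and fix $t\in(s,r)$. For each $x\in C$ we have $\limsup_k\norm{h^k(x)}^{1/k}\le s<t$, so $\sup_k\norm{T_k x}<\infty$, where $T_k:=t^{-k}h^k$ are bounded linear maps. The closed sets $A_n:=\{x\in C:\norm{T_k x}\le n\ \forall k\}$ cover $C$, so some $A_N$ contains a relative ball $(x_0+B_\delta)\cap C$. For $y\in C$ with $\norm{y}\le\delta$ one has $x_0+y\in A_N$, whence by linearity $\norm{T_k y}\le 2N$, and homogeneity promotes this to $\norm[C]{T_k}\le 2N/\delta$, i.e.\ $\norm[C]{h^k}\le (2N/\delta)t^k$. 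This forces $\bonsall[C]{h}\le t<r$, a contradiction.

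In the third case (proper cone, some iterate $h^m$ compact) I would first reduce to $m=1$ by passing to $g:=h^m$, using the standard identities $\bonsall[C]{g}=r^m$ and $\cspr[C]{g}=\cspr[C]{h}^m$, so it suffices to treat a compact homogeneous continuous $g$ with $R:=\bonsall[C]{g}>0$ and to show $\cspr[C]{g}\ge R$. The natural strategy is to produce genuine eigenvectors in $C$ from approximate ones: choosing near-maximizers as in the first case and normalizing along their orbits, I would use the compactness of $g$ to extract a convergent subsequence whose limit is an eigenvector of some power $g^p$ with eigenvalue at least $(R-\epsilon)^p$; such an eigenvector immediately gives a point with $\mu\ge R-\epsilon$, and letting $\epsilon\to0$ yields $\cspr[C]{g}\ge R$. \textbf{This is the main obstacle.} The delicate points are ensuring that the cross-section of the cone on which a Schauder/Krein--Rutman-type fixed-point argument is run behaves well when $C$ has empty interior (so that one cannot normalize by a strictly positive functional bounded below on the sphere), guaranteeing that $g$ does not vanish where the normalization takes place, and matching the eigenvalue of the limiting eigenvector to the Bonsall radius $R$ rather than to some smaller positive value. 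This is exactly where compactness does the essential work, and it is the step that requires the fixed-point-index machinery of~\cite{Nuss-Mallet} rather than the soft arguments used in the first two cases.
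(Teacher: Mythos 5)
The paper does not actually prove this theorem: it is imported wholesale from \cite[Th.~2.2, its following remark, and Th.~2.3]{Nuss-Mallet}, so there is no internal proof to compare yours against. Measured against the arguments in that reference, your first two cases are correct and are essentially the standard ones. The superposition argument for the normal, order-preserving case --- unit vectors $x_k$ with $\norm{h^k(x_k)}\geq(1-\tfrac1k)\,\bonsall[C]{h}^k$, the series $x=\sum_k k^{-2}x_k$ (which does lie in $C$, since a normal cone is by the paper's definition proper, hence closed and convex), and normality giving $\norm{h^k(x)}\geq (Mk^2)^{-1}\norm{h^k(x_k)}$ --- is exactly the mechanism of Theorem~2.2 there. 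Your Baire-category argument for the linear case on a closed convex cone is likewise sound; the one point to make explicit is that ``linear'' must be read as additive on $C$, since additivity is what lets you pass from the bound on $T_k(x_0+y)$ to a bound on $T_ky$.

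The genuine gap is the third case. The reduction to $m=1$ via $g=h^m$ is fine, though the identity $\cspr[C]{h^m}=(\cspr[C]{h})^m$ deserves a line of justification (a priori the $\limsup$ along the subsequence of multiples of $m$ could drop; one fixes this by distributing the maximizing subsequence over the residues mod $m$ and replacing $x$ by some $h^s(x)$). But the core assertion --- that a compact, continuous, homogeneous $g$ on a proper cone with $\bonsall[C]{g}>0$ admits points, or eigenvectors of iterates, whose growth rate comes arbitrarily close to $\bonsall[C]{g}$ --- is precisely Theorem~2.3 of \cite{Nuss-Mallet}, and you have not proved it: you have correctly named the obstacles (no strictly positive functional when $\interior C=\emptyset$, possible vanishing of $g$ where one normalizes, and above all tying the limiting eigenvalue to $\bonsall[C]{g}$ rather than to some smaller positive number) and then deferred to the fixed-point-index machinery of the reference. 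Since order-preservation is unavailable here, neither of your first two arguments adapts, and a bare Schauder-type argument on a slice of the cone only yields an eigenvector with \emph{some} nonnegative eigenvalue, not one comparable to $\bonsall[C]{g}$. So your write-up is an honest partial proof: complete for two of the three hypotheses, and for the third a correct diagnosis of where the real work lies --- which is the same level of detail as the paper itself, but short of a self-contained argument.
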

If $C$ is a proper cone and $h:C\to C$ is continuous and homogeneous,
an example in~\cite{Nuss-Mallet} shows that it may happen that
$\cspr[C]{h}<\bonsall[C]{h}$. In a recent work~\cite{gripenberg},
Gripenberg showed that the same may happen
even if $h$ is required in addition to be order preserving.
We now introduce the notions
of \NEW{$k$-set-contraction} and 
\NEW{essential spectral radius},
which rely on measures of noncompactness.

A map $\mesname$ from the set of bounded subsets of $X$ to the set of real
nonnegative numbers is called a \NEW{homogeneous generalized measure of noncompactness} or, for brevity, a homogeneous generalized MNC, if for all bounded subsets $A,B$ of $X$ and for all
scalars $\lambda$,
\begin{subequations}
\begin{align}
\mesname(A) = 0 &\Leftrightarrow \closure{A} \text{ is compact}
 \label{sp-gmnc1}\\
\mesname(A+B) &\leq \mesname(A) + \mesname (B) \label{sp-gmnc2}\\
\mesname(\closure{\con(A)}) &= \mesname (A) \label{sp-gmnc3} \\
\mesname(\lambda A) &= |\lambda| \mesname(A) \label{sp-gmnc5}\\
A\subset B&\implies \mesname(A)\leq \mesname(B) \enspace .
\label{sp-gmnc6}
\end{align}
\end{subequations}
We use the notation $\closure A$ for the \NEW{closure}
of $A$ and $\con{A}$ for the \NEW{convex hull} of $A$.
Note that our definition is slightly more general
than that in~\cite{Nuss-Mallet}, which requires
set-additivity for $\mesname$, namely, 
that $\mesname(A\cup B)=\mesname(A)\vee\mesname(B)$ holds for all bounded
sets $A,B$. 
However, our definition of homogeneous, generalized MNC is the same as that in~\cite{MPN11}.   The question of set-additivity for a homogeneous, generalized MNC $\mesname$ is discussed at length in Theorem 2.8 in~\cite{MPN11}.  The property of set-additivity is sometimes convenient but will play no role here.  In fact we shall only need a special case of set-additivity which is proved in Proposition~2.1 in~\cite{MPN11} or can easily be derived directly from the above definition.
\begin{proposition}[Proposition~2.1 of~\cite{MPN11}]\label{sp-prop-mes-simple}
Let $\mesname$ denote a homogeneous generalized MNC
on $X$. Then, for all bounded subsets $A,B$ of $X$,
\begin{align}
\mesname(A\cup B)=\mesname(A) & \mrm{ if } B \mrm{ is relatively compact.}
\label{sp-gmnc7}
\end{align}
\end{proposition}
\if{
\begin{proof}
Observe first that for all bounded subsets $A$ of $X$, and
for all $x\in X$, $\mesname(A+\{x\})=\mesname(A)$.
Indeed, since $\{x\}$ is compact, it follows from~\eqref{sp-gmnc2}
that $\mesname(A+\{x\})\leq \mesname(A)+\mesname(\{x\})=\mesname(A)$. 
The same argument shows that $\mesname(A)=\mesname(A+\{x\}-\{x\})\leq \mesname(A+\{x\})$,
and so $\mesname(A+\{x\})=\mesname(A)$. 

Let us now fix $a_0\in A$. Then, $A+(B\cup\{a_0\})-\{a_0\}\supset A\cup B$.
By~\eqref{sp-gmnc6}, $\mesname(A\cup B)\leq \mesname(A+(B\cup\{a_0\})-\{a_0\})
=\mesname(A+(B\cup\{a_0\}))$, and by~\eqref{sp-gmnc1} and~\eqref{sp-gmnc2}, 
$\mesname(A+(B\cup\{a_0\}))\leq \mesname(A)+\mesname(B\cup\{a_0\})=\mesname(A)$.
Hence, $\mesname(A\cup B)\leq \mesname(A)$.
The opposite inequality follows from~\eqref{sp-gmnc6}.
\end{proof}}\fi

For every bounded subset $A$ of $X$,
let $\darbo(A)$ denote the infimum
of all $\delta>0$ such that there
exists an integer $k$ and
$k$ subsets $S_1,\ldots,S_k\subset A$
of diameter at most $\delta$,
such that $A = S_1\cup\cdots\cup S_k$.
The map $\darbo$, introduced by Kuratowski and further studied by Darbo
(see~\cite{Nuss-Mallet} for references), is a particular case
of a homogeneous generalized 
MNC.
It satisfies in addition, for all bounded sets $A,B\subset X$,
\begin{align}
\darbo(A\cup B) &= \darbo (A) \vee \darbo (B) 
\enspace .\label{sp-gmnc4-alpha} 
\end{align}

As another example, consider the Banach space $X=\C (W)$ 
of continuous functions from a compact metric space $(W,d)$ to $\R$.
For all bounded subsets $A$ of $X$ and $\delta>0$, define:
\begin{subequations}
\begin{align}
 \omega_\delta(A)&:=\sup\set{|x(t)-x(s)|}{x\in A,\text{ and } t,s\in W 
\text{ satisfying } d(s,t)\leq \delta}
\end{align}
and the ``modulus of equicontinuity''
\begin{align}
\omega(A)=\inf_{\delta>0}\omega_\delta (A)\enspace.
\end{align}
\end{subequations}
The map $\omega$ is a homogeneous generalized MNC
on $X$ (see \cite{Nuss-Mallet}).

If $h$ : $D \subset X \rightarrow X$ is a map, 
we define
\[
\mes[D]{h} =
\inf\set{\lambda > 0}{\mes{h(A)} \leq \lambda \mes{A},\; \mrm{ for all bounded sets } A \subset D}
\enspace .
\]
If in addition $h(D) \subset D$, we define :
\[
\essp[D,\mesname]{h} = \lim_{k \rightarrow \infty} (\mes[D]{h^k})^{1/k} = \inf_{k \geq 1} \mes[D]{h^k}^{1/k}
\enspace .
\]
If $C$ is a cone and $h$: $C \rightarrow C$
is homogeneous and Lipschitz continuous with constant $\kappa$, 
then $\contract[C]{h} \leq \kappa$.
A general map $h$: $D \subset X \rightarrow X$, such that 
$\mes[D]{h} \leq k < 1$ is called a \NEW{$k$-set-contraction}
with respect to the homogeneous generalized MNC
$\mesname$.
If $C$ is a cone and $h$ : $C \rightarrow C$ is homogeneous, $\essp[C,\mesname]{h}$ 
is called the \NEW{cone essential spectral radius} of $h$
associated to the homogeneous generalized 
MNC
$\mesname$.
In general $\mes[C]{\cdot}$ depends on the homogeneous generalized MNC
$\mesname$. 
When $\mesname$ and $\mesname'$ are equivalent in the sense
that there exist positive constants $m$ and $M$, such that for all
bounded subsets $A$ of $C$:
\[ m \mesname(A)\leq \mesname'(A)\leq M\mesname(A)\enspace,\]
then one can prove that $\essp[C,\mesname]{\cdot}=\essp[C,\mesname']{\cdot}$.
When $X=\C(W)$ as above, this property applies in particular to the modulus of equicontinuity
 $\omega$ and to the Kuratowski-Darbo MNC
$\alpha$,
which satisfy $\alpha(A)\leq \omega(A)\leq 2\alpha(A)$, for all
bounded subsets of $\C(W)$, see~\cite[Theorem~1]{nussbaum71}.

However, if the homogeneous, generalized MNC's $\mesname$ and $\mesname'$ are not equivalent, examples are given in~\cite{nussbaummp10} and~\cite{nussbaummp11} to show that it may happen that $\essp[C,\mesname]{\cdot}$ is unequal to $\essp[C,\mesname']{\cdot}$.  Because of this difficulty (among others), a different definition of the cone essential spectrum is given in~\cite{nussbaummp10}.  If we temporarily label this definition $\esspalt[C]{h}$,
one can prove, with the aid of Proposition 3.1 in~\cite{nussbaummp10} that 
$\esspalt[C]{h}\leq \essp[C,\mesname]{h}$
for any homogeneous, generalized MNC $\mesname$.

We shall defer treatment of $\esspalt[C]{h}$ to a latter
paper. Here we shall take a fixed, homogeneous, general MNC 
$\mesname$; and for notational simplicity we shall write $\essp[C]{\cdot}$
instead of $\essp[C,\mesname]{\cdot}$.

\begin{theorem}[{\cite[Th.~3.1]{Nuss-Mallet}}]\label{sp-th-nus2}
Let $C$ be a proper cone in a Banach space $(X,\norm{\cdot})$, and
$h:C\to C$ be a map that is continuous, homogeneous,
and order-preserving with respect to $C$. 
Suppose that for some $m\geq 1$, 
\[
(\mes[C]{h^m})^{1/m}< \cspr[C]{h} 
\enspace .
\]
Then, there exists a vector $x_m \in C\setminus\{0\}$ satisfying
\begin{align}
h^m(x_m)= (\cspr[C]{h})^m x_m
\enspace .
\label{sp-th-nus21} 
\end{align}
\end{theorem}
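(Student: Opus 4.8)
The plan is to renormalize so that the statement becomes a fixed-point problem for a $k$-set contraction on the cone, and then to detect the fixed point by a fixed-point-index argument exploiting the homogeneity.

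First I would pass to the single map $g:=h^m$. Writing $\beta:=(\cspr[C]{h})^m$, I would check that $\cspr[C]{g}=\beta$. The inequality $\cspr[C]{g}\leq\beta$ is immediate from~\eqref{sp-eq31-cspr}, since $\norm{g^k(x)}^{1/k}=(\norm{h^{mk}(x)}^{1/(mk)})^m$ and $\limsup_k\norm{h^{mk}(x)}^{1/(mk)}\leq\mu(x)$; for the reverse inequality I would fix $x$, pick a residue class $s\in\{0,\dots,m-1\}$ realizing $\mu(x)=\limsup_j\norm{h^j(x)}^{1/j}$, and observe that the shifted point $h^s(x)$ satisfies $\limsup_k\norm{g^k(h^s(x))}^{1/k}=\mu(x)^m$, so that $\cspr[C]{g}\geq\mu(x)^m$ and hence $\geq\beta$. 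The hypothesis then reads $\mes[C]{h^m}=\mes[C]{g}<\beta$, so in particular $\beta>0$, and the rescaled map $G:=\beta^{-1}g$ is continuous, homogeneous and order-preserving, with $\cspr[C]{G}=1$ and $\mes[C]{G}=\mes[C]{g}/\beta=:k_0<1$ (using~\eqref{sp-gmnc5}). The theorem is now equivalent to the existence of $x\in C\setminus\{0\}$ with $G(x)=x$, because such an $x$ satisfies $h^m(x)=\beta x$.

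The tool I would use is Nussbaum's fixed-point index $i_C$ for $k$-set contractions on the proper cone $C$. Since $k_0<1$, the map $G$ is a $k$-set contraction, so for every bounded relatively open $U\subset C$ on whose relative boundary $G$ has no fixed point, $i_C(G,U)$ is defined and enjoys homotopy invariance, additivity, normalization, and the solution property. The difficulty that $0$ is always fixed by a homogeneous map is circumvented by comparing two admissible homotopies on the single unit sphere $S:=\set{x\in C}{\norm{x}=1}$, arguing by contradiction. Assume $G$ has no fixed point in $C\setminus\{0\}$. On $S$ the contraction homotopy $t\mapsto tG$, $t\in[0,1]$, is fixed-point-free: a relation $tG(x)=x$ with $t\in(0,1)$ would give the eigenvalue $1/t>1$, impossible since $\eigenvalspr[C]{G}\leq\cspr[C]{G}=1$ by Proposition~\ref{sp-prop-easy}; the case $t=1$ is excluded by the assumption and $t=0$ is trivial. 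Hence, writing $B:=\set{x\in C}{\norm{x}<1}$, homotopy invariance and normalization give $i_C(G,B)=i_C(0,B)=1$.

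The crux is then to compute the same index as $0$ through an expansion homotopy $t\mapsto G(\cdot)+tw$, $t\geq 0$, for a well-chosen $w\in C\setminus\{0\}$: for $t$ large the map has no fixed point in $\overline{B}$ (its image has large norm), so admissibility of the homotopy on $S$ would yield $i_C(G,B)=0$, and the contradiction $1=0$ produces the desired eigenvector. The main obstacle is the construction of $w$ and the verification that $x-G(x)=tw$ has no solution on $S$ for $t\geq 0$; note that such a solution is a sub-invariant point $G(x)\leq x$, so one must choose $w$ making strict sub-invariance on the unit sphere incompatible with $\cspr[C]{G}=1$. This is exactly where the strict gap $\mes[C]{G}=k_0<1=\cspr[C]{G}$ does the real work: the $k$-set contraction bound makes the normalized orbits relatively compact, through an estimate of the shape $\mes{\{G^k(x_0)/\norm{G^k(x_0)}\}}\leq k_0\,\mes{\{G^k(x_0)/\norm{G^k(x_0)}\}}$ forcing the measure to vanish, so that along an orbit with $\norm{G^k(x_0)}^{1/k}\to 1$ one extracts a limiting direction serving as an approximate eigenvector at eigenvalue $1$. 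Controlling the norm ratios $\norm{G^{k+1}(x_0)}/\norm{G^k(x_0)}$ uniformly away from $0$, and converting this limiting direction into the strict boundary condition on $S$, is the technical heart of the argument and is where I would expect to spend most of the effort; the absence of any normality assumption on $C$ is what makes this step delicate, since one cannot simply control norms by the order relation.
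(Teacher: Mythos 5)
First, a remark on the comparison you asked for: the paper does not prove this statement at all --- Theorem~\ref{sp-th-nus2} is imported verbatim from \cite[Th.~3.1]{Nuss-Mallet} --- so your proposal has to stand on its own, and it does not yet.

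The parts you do carry out are fine. The reduction to $m=1$ is correct: the pigeonhole argument gives $\cspr[C]{h^m}=(\cspr[C]{h})^m$, and rescaling by $\beta$ turns the hypothesis into $\mes[C]{G}=k_0<1=\cspr[C]{G}$. The first index computation is also sound: the homotopy $tG$, $t\in[0,1]$, consists of $k_0$-set contractions, is admissible on the unit sphere of $C$ because a relation $tG(x)=x$ with $t<1$ would give an eigenvalue $1/t>1\geq\eigenvalspr[C]{G}$ by Proposition~\ref{sp-prop-easy}, and so $i_C(G,B)=1$. But the proof stops exactly where the content of the theorem lies. The second computation --- producing $w\in C\setminus\{0\}$ such that $x-G(x)=tw$ has no solution with $x$ on the unit sphere and $t\geq 0$, forcing $i_C(G,B)=0$ --- is not carried out, and the heuristic you offer for it is wrong as stated: a solution does give $G(x)\leq x$, but sub-invariance of a single boundary point is entirely compatible with $\cspr[C]{G}=1$ (a map that fixes one ray and contracts another has such points), so no choice of $w$ makes ``strict sub-invariance on the unit sphere'' contradictory by itself. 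What is actually needed is the iteration: from $x=G(x)+tw\geq tw$ and order-preservation one gets $x\geq tG^k(w)$ for every $k\geq 0$, and one must choose $w$ (for instance as a limit of normalized orbit points $G^{k_i}(x_0)/\norm{G^{k_i}(x_0)}$, extracted by exactly the measure-of-noncompactness estimate you mention) so that the orbit of $w$ cannot remain order-bounded by a fixed element of $C$. Making that incompatibility rigorous when $C$ is merely proper --- so that $0\leq y\leq x$ gives no control of $\norm{y}$ by $\norm{x}$ --- is the whole difficulty; compare Lemma~\ref{sp-lemma-proper} of the present paper, where the same obstruction is overcome only by a further compactness extraction. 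As it stands, your proposal is a plausible strategic outline, consistent in spirit with the index-theoretic proof in \cite{Nuss-Mallet}, but with the decisive step missing and its proposed justification flawed.
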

It is a special case of Theorem 3.4 in~\cite{nussbaummp10} that, under the hypotheses of Theorem 3.4 above, we have $\bonsall[C]{h}=\cspr[C]{h}$.

This result implies that, under the same assumptions
\begin{align}
\cspr[C]{h}=  (\eigenvalspr[C]{h^m})^{1/m} = \cczech[C]{h}\enspace ,
\label{sp-coro-33}
\end{align}
in particular,
\[
\essp[C]{h}< \cspr[C]{h}\implies  \cspr[C]{h}=\cczech[C]{h}\enspace ,
\]
see~\cite[Cor.~3.3]{Nuss-Mallet}.
\\
\section{A nonlinear Fredholm-type property}\label{sec-fredholm}
We now introduce a nonlinear Fredholm-type property. We also refer
the reader to~\cite{AGN} for more insight
on this property; it is used there to establish uniqueness results for the fixed
point of a nonexpansive map defined on a subset of a Banach space.

If $(X,\norm{\cdot})$ and $(Y,\norm{\cdot})$ are Banach spaces,
$D$ is a subset of $X$, and $h:D\to Y$ is
a map, we shall say that
$h$ has {\em Property~\PF} when
\begin{enumerate}
\item[(F)]any sequence 
$\seq{x_j \in D}{j \geq 1}$, bounded in $X$,
and such that $h(x_j) \rightarrow_{j\to \infty} 0$,
has a convergent subsequence in $X$.
\end{enumerate}
In the point set topology literature, Property~\PF\ corresponds to the 
property that the restriction of $h$ to any closed bounded set of $X$ is
\NEW{proper} at $0$.
If $X$ is finite dimensional, any continuous homogeneous 
map $h:X\to Y$ has Property~\PF.
When $h$ is a bounded linear map, $h$ has Property~\PF\ if, and only if,
$h$ is a semi-Fredholm linear operator with index in 
$\Z\cup\{-\infty\}$, which means that $h$ has a finite dimensional
kernel and a closed range,
see for instance~\cite[Proposition~19.1.3]{hormander}
or~\cite[Chapter IV, Theorems~5.10 and~5.11]{kato}.
In the sequel, $\id$ denotes the identity map over any set.

\begin{lemma}\label{sp-lem-ksetcontract}
If $D$ is a subset of a Banach space $(X,\norm{\cdot})$, and if $h:D\to X$
is such that $\mes[D]{h}<1$, then $\id -h$ has
Property~\PF.
\end{lemma}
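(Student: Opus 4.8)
The plan is to reduce Property~\PF\ for $\id-h$ to a single measure-of-noncompactness estimate. Suppose $\seq{x_j\in D}{j\geq 1}$ is bounded in $X$ and satisfies $(\id-h)(x_j)\to 0$. Writing $A:=\set{x_j}{j\geq 1}$, this is a bounded subset of $D$, and it suffices to prove that $\mes{A}=0$: then $\closure A$ is compact by~\eqref{sp-gmnc1}, so $\seq{x_j}{j\geq 1}$ admits a convergent subsequence, which is exactly what~\PF\ requires.

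Next, to estimate $\mes{A}$, I would set $y_j:=(\id-h)(x_j)=x_j-h(x_j)$, so that $x_j=h(x_j)+y_j$. Since $y_j\to 0$, the set $B:=\set{y_j}{j\geq 1}\cup\{0\}$ is compact, whence $\mes{B}=0$ again by~\eqref{sp-gmnc1}. The decomposition $x_j=h(x_j)+y_j$ yields the inclusion $A\subseteq h(A)+B$, and combining monotonicity~\eqref{sp-gmnc6} with subadditivity~\eqref{sp-gmnc2} gives
\[
\mes{A}\leq \mes{h(A)+B}\leq \mes{h(A)}+\mes{B}=\mes{h(A)}\enspace .
\]

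Finally I would exploit the strict bound $\mes[D]{h}<1$. Since the inequality defining $\mes[D]{h}$ is preserved when $\lambda$ increases, any $\lambda\in(\mes[D]{h},1)$ satisfies $\mes{h(A)}\leq\lambda\,\mes{A}$ for every bounded $A\subset D$. Chaining this with the previous display gives $\mes{A}\leq\lambda\,\mes{A}$, that is $(1-\lambda)\mes{A}\leq 0$; as $1-\lambda>0$ and $\mes{A}\geq 0$, I conclude $\mes{A}=0$, which finishes the argument. The proof is essentially routine; the only points that require genuine care are checking that $B$ is actually compact, so that $\mes{B}=0$, and that the infimum in the definition of $\mes[D]{h}$ furnishes a usable strict contraction factor $\lambda<1$ rather than merely $\lambda\leq 1$. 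Both are immediate here, the former from $y_j\to 0$ and the latter precisely from the strictness $\mes[D]{h}<1$.
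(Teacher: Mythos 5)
Your proof is correct and follows essentially the same route as the paper: decompose $x_j=h(x_j)+(\id-h)(x_j)$ to get $A\subseteq h(A)+B$ with $B$ relatively compact, then apply subadditivity, monotonicity, and the strict contraction bound to force $\mes{A}=0$. The only cosmetic difference is that you pick an explicit $\lambda\in(\mes[D]{h},1)$ while the paper uses $\mes[D]{h}$ itself as the factor; both work since $\mesname$ takes finite values on bounded sets.
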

\begin{proof}
Let $\seq{x_j \in D}{j \geq 1}$ be a bounded sequence 
such that $(\id - h)(x_j)\to 0$
when $j\to\infty$, and
let $S=\set{x_j}{j\geq 1}$, $T=\set{(\id - h)(x_j)}{j\geq 1}$.
Since $(\id-h)(x_j)$ converges when $j\to\infty$, 
the set $T$ is relatively compact in $X$,
and by~\eqref{sp-gmnc1}, $\mesname(T)=0$. 
Since $S$ is bounded, $\mesname(S)$ is finite.
Since $x_j = (\id-h)(x_j)+h(x_j)$, we get that
$S\subset T+h(S)$. 
Applying $\mesname$ and using~\eqref{sp-gmnc6} and~\eqref{sp-gmnc2}, 
we get
\begin{align*}
\mesname(S) & \leq \mesname(T + h(S)) \leq \mesname (T) + \mesname (h(S)) 
 = \mesname (h(S)) \leq \mes[D]{h} \mesname(S)
\end{align*}
and since 
$\mes[D]{h} < 1$ and $\mesname(S)$ is a finite nonnegative
number, it follows that $\mesname(S) = 0$.
Using Property~\eqref{sp-gmnc1}, we get that $S$ is relatively compact in $X$.
Therefore $\seq{x_j}{j \geq 1}$ has a convergent subsequence in $X$.
\end{proof}
\begin{proposition}\label{sp-prop1}
If $C$ is a cone in a Banach space $(X,\norm{\cdot})$, 
if $h:C \rightarrow C$ is homogeneous
and uniformly continuous on bounded sets of $C$,
and if either $\essp[C]{h}< 1$ or $\bonsall[C]{h}< 1$,
then $\id-h$ has Property $\PF$ on $C$.
Moreover,
when $\bonsall[C]{h}< 1$, $0$ is the unique fixed point of $h$ in $C$.
\end{proposition}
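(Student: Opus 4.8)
The plan is to reduce everything to a single well-chosen iterate $h^m$, to handle the two hypotheses by two parallel arguments, and then to transfer Property~\PF\ (or outright convergence) back from $h^m$ to $h$. Before doing so I would record two preliminary facts. Since $h$ is uniformly continuous on bounded sets it is in particular continuous at $0$, so by the remark following~\eqref{sp-definormh} we have $\norm[C]{h}<+\infty$, whence $\norm{h(x)}\leq\norm[C]{h}\,\norm{x}$; thus $h$, and by induction every iterate $h^i$ (with $\norm[C]{h^i}\leq\norm[C]{h}^i<+\infty$), maps bounded sets to bounded sets. Being a finite composition of maps that are uniformly continuous and bounded-to-bounded on bounded sets, each $h^i$ is itself uniformly continuous on bounded sets.

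The crux of the argument is the following reduction. Let $\seq{x_j\in C}{j\geq 1}$ be bounded with $(\id-h)(x_j)\to 0$, and fix $m\geq 1$. I would expand the telescoping identity
\[
(\id-h^m)(x_j)=\sum_{i=0}^{m-1}\bigl(h^i(x_j)-h^i(h(x_j))\bigr)\enspace.
\]
Since $\seq{x_j}{j\geq 1}$ and $\seq{h(x_j)}{j\geq 1}$ are bounded and $\norm{x_j-h(x_j)}=\norm{(\id-h)(x_j)}\to 0$, the uniform continuity of each $h^i$ on a fixed bounded set containing both sequences forces every summand, hence the whole finite sum, to tend to $0$. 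Therefore $(\id-h)(x_j)\to 0$ implies $(\id-h^m)(x_j)\to 0$.

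With this in hand I would treat the two cases. If $\essp[C]{h}<1$, then because $\essp[C]{h}=\inf_{k\geq 1}\mes[C]{h^k}^{1/k}$ there is an $m$ with $\mes[C]{h^m}<1$; Lemma~\ref{sp-lem-ksetcontract} applied to $h^m$ shows that $\id-h^m$ has Property~\PF\ on $C$, and the reduction then yields at once that a bounded $\seq{x_j}{j\geq 1}$ with $(\id-h)(x_j)\to 0$ satisfies $(\id-h^m)(x_j)\to 0$ and so has a converging subsequence, i.e.\ $\id-h$ has Property~\PF. If instead $\bonsall[C]{h}<1$, I would choose $m$ with $c:=\norm[C]{h^m}<1$; then from $(\id-h^m)(x_j)\to 0$ and $\norm{h^m(x_j)}\leq c\,\norm{x_j}$ one gets $(1-c)\norm{x_j}\leq\norm{x_j-h^m(x_j)}\to 0$, so in fact $x_j\to 0$, which is a fortiori Property~\PF. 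The same estimate settles the final assertion: any fixed point $x=h(x)=h^m(x)$ obeys $\norm{x}\leq c\,\norm{x}$ with $c<1$, forcing $x=0$.

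I expect the only genuine obstacle to be the passage from $h^m$ back to $h$. For nonlinear $h$ the linear factorization $\id-h^m=(\id-h)(\id+h+\cdots+h^{m-1})$ is unavailable, and it is precisely the telescoping identity combined with the uniform continuity of the iterates on bounded sets that substitutes for it; establishing that uniform continuity of iterates (together with the finiteness of $\norm[C]{h}$) is the one place requiring care. Everything else is either a direct norm estimate or a single application of Lemma~\ref{sp-lem-ksetcontract}.
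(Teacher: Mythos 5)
Your proof is correct and follows essentially the same route as the paper's: the same telescoping identity reducing $(\id-h)(x_j)\to 0$ to $(\id-h^m)(x_j)\to 0$, followed by the same two-case analysis via Lemma~\ref{sp-lem-ksetcontract} and the norm estimate $(1-\norm[C]{h^m})\norm{x_j}\leq\norm{x_j-h^m(x_j)}$. The only cosmetic difference is that you establish and invoke uniform continuity of the iterates $h^i$ on bounded sets directly, where the paper handles each telescoping summand by induction using uniform continuity of $h$ alone.
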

\begin{proof}
Let $\seq{x_j \in C}{j \geq 1}$ be a bounded sequence 
such that $(\id - h)(x_j)\to 0$
when $j\to\infty$.
For all $n\geq 1$, we can write:
\begin{align}
\label{sp-i-hn-telescopic}
(\id - h^n)(x_j) = \sum_{m=0}^{n-1} (h^m(x_j) - h^{m+1}(x_j))
\enspace .
\end{align}
We claim that for all $m\geq 0$,
\begin{align}
\label{sp-hm-hm+1}
h^m(x_j) - h^{m+1}(x_j) \to 0 \mrm{ when }j\to \infty
\enspace .
\end{align}
Since \eqref{sp-hm-hm+1} holds by assumption
when $m=0$, let us assume by induction that~\eqref{sp-hm-hm+1} holds
for some $m\geq 0$. Since $h$ is continuous and homogeneous,
it follows from~\eqref{sp-definormh} that the sequences
$\seq{h^m(x_j)}{j \geq 1}$ 
and $\seq{h^{m+1}(x_j)}{j \geq 1}$ 
are bounded. Since $h$ is uniformly continuous on bounded sets,
we get from~\eqref{sp-hm-hm+1} that
$h^{m+1}(x_j) - h^{m+2}(x_j) \to 0 \mrm{ when }j\to \infty$,
which shows by induction that~\eqref{sp-hm-hm+1} holds
for all $m\geq 0$. Combining~\eqref{sp-hm-hm+1} and~\eqref{sp-i-hn-telescopic},
we get that for all $n \geq 1$,
\begin{equation}\label{sp-*1}
(\id - h^n)(x_j)\to 0 \mrm{ when } j\to\infty
\enspace.
\end{equation}
Assume first that $\essp[C]{h} < 1$
so that $\mes[C]{h^n} < 1$ for some $n \geq 1$.
Then, by Lemma~\ref{sp-lem-ksetcontract},
$\id-h^n$ has Property~\PF, and we deduce
from~\eqref{sp-*1} that $\seq{x_j}{j \geq 1}$
has a convergent subsequence in $X$, so that $\id-h$
has Property~\PF.

Assume now that $\bonsall[C]{h}<1$,
so that $\norm[C]{h^n}<1$ for some $n\geq 1$.
Since
\[ 
\norm{x_j} \leq \norm{x_j-h^{n}(x_j)} +\norm{h^{n}(x_j)}
\leq \norm{x_j-h^{n}(x_j)}+ \norm[C]{h^n} \norm{x_j}\enspace,\]
we obtain, using~\eqref{sp-*1}
\[ (1-\norm[C]{h^n})\norm{x_j} \leq \norm{x_j-h^{n}(x_j)}
\to 0\mrm{ when } j\to\infty\enspace,\]
hence $x_j$ converges to $0$.
This shows that $\id -h$ has Property \PF.
Finally, if $x$ is a fixed point of $h$, we get that $h^n(x)=x$, 
hence $\norm{x}=\norm{h^n(x)}\leq \norm[C]{h^n} \norm{x}$, and since $\norm[C]{h^n}<1$,
we deduce that $x=0$.
\end{proof}

\section{Existence of eigenvectors of order-preserving maps in sup-semilattices}
\label{sp-sec-existence}
Let $C$ be a proper cone in a Banach space $X$. We say that a subset
$D$ of $X$ is a 
\NEW{sup-semilattice} (in the ordering from $C$) if, for all $x,y\in D$, there
exists $z\in D$ such that 
$x\leq z, y\leq z$ and
$z\leq w$ for every $w\in D$ for which $x\leq w$ and $y\leq w$.
Then, the element $z$ as above is unique and we shall write it $w=x\vee y$.
If $C$ is not normal, it may easily happen (see
the remark after Lemma 3.6 in~\cite{Nuss-Mallet}) that $X$ is a
sup-semilattice but that the map $(x,y)\mapsto x\vee y$ is not continuous.
The notion of \NEW{inf-semilattice} is defined dually (by reversing the order).
A \NEW{lattice} is an ordered set which is both an inf-semilattice and a sup-semilattice.

\begin{theorem}\label{sp-th-add1-new}
Let $C$ be a proper cone in a Banach space $(X,\norm{\cdot})$,
and assume that $C$ is a sup-semilattice in the partial ordering induced 
by $C$. Assume that $h:C\to C$ is continuous,
homogeneous and order-preserving in the partial ordering from $C$,
and that $\essp[C]{h}<\cspr[C]{h} $. Then there exists
$z\in C\setminus \{0\}$ such that
\[h(z)\geq \cspr[C]{h} z
\enspace .\]
If, in addition, we assume that there exists 
$\zeta\in C\setminus \{0\}$ such that
\begin{align*}
 \laba{1} &\qquad h(\zeta)\geq \cspr[C]{h} \zeta,
\qquad\text{and}\\
\laba{2} &\qquad \set{\frac{h^k(\zeta)}{\cspr[C]{h}^k}}{k\geq 0} \text{ is bounded,}
\end{align*}
then $h(x)=\cspr[C]{h} x$ for some $x\in C\setminus\{0\}$,
and, in particular, $\eigenvalspr[C]{h}=\cspr[C]{h}$.
\end{theorem}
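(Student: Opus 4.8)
Set $r:=\cspr[C]{h}$. The hypothesis $\essp[C]{h}<r$ forces $r>0$ and, by the definition of $\essp[C]{\cdot}$, produces an integer $m\geq 1$ with $(\mes[C]{h^m})^{1/m}<r$; this is precisely the hypothesis of Theorem~\ref{sp-th-nus2}, which therefore yields $x_m\in C\setminus\{0\}$ with $h^m(x_m)=r^m x_m$. The plan for the first assertion is to turn this eigenvector of $h^m$ into a super-eigenvector of $h$. The naive device $\sum_{j=0}^{m-1} r^{-j}h^j(x_m)$ is illegitimate because $h$ is not additive; the sup-semilattice hypothesis lets me replace the sum by a join, and this is exactly where it is used. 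I would set
\[
z:=\bigvee_{j=0}^{m-1}\frac{h^j(x_m)}{r^j}\in C .
\]
Since $h$ is order-preserving and homogeneous, $h(z)\geq h(h^j(x_m)/r^j)=r\cdot h^{j+1}(x_m)/r^{j+1}$ for each $j$; as $h^m(x_m)/r^m=x_m=h^0(x_m)/r^0$, the family $\{h^k(x_m)/r^k:1\leq k\leq m\}$ equals $\{h^k(x_m)/r^k:0\leq k\leq m-1\}$, so $r^{-1}h(z)$ dominates every term of the join, whence $r^{-1}h(z)\geq z$, i.e. $h(z)\geq rz$. Since $z\geq x_m\neq 0$ and $C$ is pointed, $z\neq 0$.

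For the second assertion, I would study the normalized orbit $\zeta_k:=h^k(\zeta)/r^k$. Hypothesis (a), $h(\zeta)\geq r\zeta$, gives by an immediate induction $h^{k+1}(\zeta)\geq r\,h^k(\zeta)$, so $(\zeta_k)$ is nondecreasing; hypothesis (b) says $A:=\set{\zeta_k}{k\geq 0}$ is bounded. The key step is to show $\closure A$ is compact. Using homogeneity one has $A=\{\zeta_0,\dots,\zeta_{m-1}\}\cup r^{-m}h^m(A)$, so applying the measure of noncompactness, together with~\eqref{sp-gmnc7} (the finite set is relatively compact), \eqref{sp-gmnc5}, and the definition of $\mes[C]{h^m}$, gives
\[
\mesname(A)=r^{-m}\mesname(h^m(A))\leq r^{-m}\,\mes[C]{h^m}\,\mesname(A).
\]
Because $\mes[C]{h^m}<r^m$ and $\mesname(A)<\infty$, this forces $\mesname(A)=0$, so $\closure A$ is compact by~\eqref{sp-gmnc1}. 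Hence some subsequence $\zeta_{k_j}$ converges to a limit $x\in C$ (the cone being closed).

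It then remains to identify $x$ as an eigenvector, and this is the step I expect to be the main obstacle: without a normality assumption a bounded nondecreasing sequence need not converge, so I cannot simply pass to the limit in the whole sequence. The remedy is to exploit that the order $\leqc$ is closed (as $C$ is closed) together with the two available convergent subsequences. By continuity $h(\zeta_{k_j})\to h(x)$, and since $h(\zeta_{k_j})=r\,\zeta_{k_j+1}$ this gives $\zeta_{k_j+1}\to r^{-1}h(x)$. From $\zeta_{k_j}\leq\zeta_{k_j+1}$ and closedness of the order I get, in the limit, $x\leq r^{-1}h(x)$, i.e. $rx\leq h(x)$. On the other hand, for each fixed $k$ and $k_j\geq k$ one has $\zeta_k\leq\zeta_{k_j}\to x$, so $\zeta_k\leq x$ for all $k$; in particular $\zeta_{k_j+1}\leq x$, and letting $j\to\infty$ yields $r^{-1}h(x)\leq x$, i.e. $h(x)\leq rx$. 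Combining, $h(x)=rx$, with $x\geq\zeta_0=\zeta\neq 0$, hence $x\in C\setminus\{0\}$.

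Finally, the eigenvector just produced gives $\eigenvalspr[C]{h}\geq r$, while Proposition~\ref{sp-prop-easy} gives $\eigenvalspr[C]{h}\leq\cspr[C]{h}=r$, so $\eigenvalspr[C]{h}=\cspr[C]{h}$, completing the argument.
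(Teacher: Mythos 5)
Your proof is correct and follows essentially the same route as the paper's: Theorem~\ref{sp-th-nus2} applied to $h^m$, the join of the normalized iterates of $x_m$ over one period to produce the super-eigenvector, and a self-referencing decomposition of the normalized orbit of $\zeta$ to force $\mesname(A)=0$ and hence relative compactness. The only (harmless) deviation is at the very end: the paper proves that the whole monotone sequence converges via a two-subsequential-limits argument before invoking continuity, whereas you sandwich $r^{-1}h(x)$ between $x$ and $x$ using order-closedness of $C$ on a single convergent subsequence, a slightly more economical finish.
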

\begin{proof}%
By replacing $h$ by $r^{-1} h$, where $r=\cspr[C]{h}$, we 
can assume that $r=\cspr[C]{h}=1$.
Since  $\essp[C]{h}<\cspr[C]{h}=1$, 
there exists $N\geq 1$ such that $\mes[C]{h^m}<1$, for all $m\geq N$.
Select a fixed $m\geq N$.
Theorem~\ref{sp-th-nus2} implies that there exists $x_m\in C\setminus\{0\}$
such that $h^m(x_m)=x_m$. 
Define $z:=x_m\vee h(x_m) \vee\cdots\vee h^{m-1} (x_m)$,
so $z\geq h^j(x_m)$ for all $j\geq 0$.
It follows that $h(z)\geq h^{j+1}(x_m)$ for all $j\geq 0$
($h$ is order-preserving), and since
$h^m(x_m)=x_m$, $h(z)\geq h^j(x_m)$ for all $j\geq 0$.
By the properties of the sup-semilattice, we must have $h(z)\geq z$.

Now assume that there exists $\zeta\in C\setminus \{0\}$ satisfying  
\laba{1} and \laba{2}.
This implies that the sequence $(h^k(\zeta))_{k\geq 0}$ is nondecreasing.
Let $S:= \set{h^{k} (\zeta)}{k\geq 0}$, and for $0\leq j\leq m-1$,
let $S_j:= \set{h^{km+j} (\zeta)}{k\geq 0}$. Notice that we have
$S_j=\{h^{j}(\zeta)\}\cup h^m(S_j)$.
Since $S_j$ is assumed to be bounded, we obtain,
using~\eqref{sp-gmnc7}, that
\[ \mesname(S_j)= \mesname( h^m(S_j)) 
\leq \mes[C]{h^m} \mesname(S_j) \enspace ;\]
and since $\mes[C]{h^m}<1$, we conclude that $\mesname(S_j)=0$,
and $\closure{S_j}$ is compact.
It follows that $S=\bigcup_{j=0}^{m-1} S_j$ has compact closure.
Because $\closure{S}$ is compact, there exists a strictly increasing 
sequence of integers $k_i$ with $h^{k_i}(\zeta)\to w$.
We claim that $h^k(\zeta)$ converges towards $w$ as $k\to\infty$.
If not, there exists a sequence $\ell_i\to\infty$ with
$\norm{h^{\ell_i}(\zeta)-w}\geq \delta>0$; and
by taking a further subsequence we can assume that
$h^{\ell_i}(\zeta)\to w'$, $w'\neq w$, as $i\to \infty$.
For any fixed $i$, we have $k_j\geq \ell_i$ for sufficiently large $j$, so
$h^{k_j}(\zeta)\geq h^{\ell_i}(\zeta)$ for all $j$
large and $w\geq h^{\ell_i}(\zeta)$.
If we now let $i$ approach infinity we see that $w\geq w'$.
By symmetry of this argument, we also obtain $w'\geq w$; so $w=w'$.
This contradicts our original assumption, so we must have 
that $h^k(\zeta)\to w$ as $k\to\infty$ and $h^{k+1}(\zeta)\to w$
as $k\to\infty$. However, the continuity of $h$ at $w$ implies that 
$h^{k+1}(\zeta)\to h(w)$, so $h(w)=w$. Note that $w\geq \zeta$, so
$w\neq 0$.
\end{proof}
\begin{remark}
We conjecture that if hypotheses are as above, but we do not assume 
the existence of $\zeta\in C\setminus \{0\}$ satisfying  \laba{1} and \laba{2}
in Theorem~\ref{sp-th-add1-new}, it is still true that $h(x)=\cspr[C]{h} x$
for some $x\in C\setminus\{0\}$.
In general, fixed point theorems which deduce the existence of fixed points
of a given map $f$ by making assumptions about the behavior of (large) 
iterates $f^m$ of $f$ have been called ``asymptotic fixed point theorems'';
see~\cite{nussbaum72,nussbaum77,nussbaum85} and references to the literature there.
Proving asymptotic fixed point theorems is sometimes surprisingly difficult,
and a number of old fundamental conjectures 
(see~\cite{nussbaum72,nussbaum77,nussbaum85}) remain open.
Our conjecture here fits into this framework, inasmuch as the 
assumption $\essp[C]{h}<\cspr[C]{h} =1$ is an assertion about the 
behavior of iterates of $h$ and not directly about $h$.
\end{remark}

The hypotheses of Theorem~\ref{sp-th-add1-new} may seem difficult
to verify. However, we shall see that by exploiting the concept of \NEW{parts}
of a cone (see Section~\ref{sp-sec-intro-hilbert} for the definition),
one can give natural assumptions which imply these
hypotheses.
It is convenient
to prove a lemma first.

\begin{lemma}\label{sp-lemma-proper}
Let $C$ be a proper cone in a Banach space $X$.
Assume that $h:C\to C$ is continuous, homogeneous
and order-preserving in the partial ordering from $C$ and
that $\essp[C]{h}<\cspr[C]{h}=1 $. Assume that $\zeta$
and $w\in C\setminus\{0\}$ are such that  $h^k(\zeta)\leq w$ for
all $k\geq 0$. Then $\set{h^k(\zeta)}{k\geq 0}$ is bounded.
\end{lemma}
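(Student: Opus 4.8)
The plan is to argue by contradiction, using the quasi-compactness hypothesis to compensate for the fact that $C$ is only assumed proper, not normal: the bound $h^k(\zeta)\le w$ does not by itself control $\norm{h^k(\zeta)}$, so the condition $\essp[C]{h}<1$ is essential. First I would fix $m\ge 1$ with $c:=\mes[C]{h^m}<1$, which exists since $\essp[C]{h}=\inf_k\mes[C]{h^k}^{1/k}<1$. I would then suppose, for contradiction, that $S=\set{h^k(\zeta)}{k\ge 0}$ is unbounded and pass to a \emph{record} subsequence: letting $M_n=\max_{0\le j\le n}\norm{h^j(\zeta)}$, which increases to $+\infty$, I choose indices $k_i\uparrow\infty$ at which a new maximum is attained, so that $t_i:=\norm{h^{k_i}(\zeta)}=M_{k_i}\to\infty$ while $\norm{h^j(\zeta)}\le t_i$ for every $j\le k_i$. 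Setting $y_i:=h^{k_i}(\zeta)/t_i$, homogeneity gives $\norm{y_i}=1$ and, from $0\le h^{k_i}(\zeta)\le w$, the relation $0\le y_i\le w/t_i$.

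The heart of the argument is to show that $\set{y_i}{i\ge 1}$ is relatively compact, i.e. that $\mesname(\set{y_i}{i\ge 1})=0$. For each $l\ge 1$ and each $i$ with $k_i\ge lm$, homogeneity of $h^{lm}$ gives $y_i=h^{lm}(z_i^{(l)})$ with $z_i^{(l)}:=h^{k_i-lm}(\zeta)/t_i$; by the record property $\norm{z_i^{(l)}}\le 1$, so $Z_l:=\set{z_i^{(l)}}{k_i\ge lm}$ lies in the bounded set $\set{x\in C}{\norm{x}\le 1}$ and $\mesname(Z_l)\le\kappa$, where $\kappa:=\mesname(\set{x\in C}{\norm{x}\le 1})<\infty$. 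Using $\set{y_i}{k_i\ge lm}=h^{lm}(Z_l)$ and the submultiplicativity $\mes[C]{h^{lm}}\le(\mes[C]{h^m})^l=c^l$ (which follows from $h^{a+b}(A)=h^a(h^b(A))$ together with the fact that a continuous homogeneous map sends bounded sets to bounded sets, by the remark after~\eqref{sp-definormh}), I obtain $\mesname(\set{y_i}{k_i\ge lm})\le c^l\kappa$. Since only finitely many indices satisfy $k_i<lm$, property~\eqref{sp-gmnc7} lets me discard that finite, hence relatively compact, part, so $\mesname(\set{y_i}{i\ge 1})\le c^l\kappa$ for every $l$; letting $l\to\infty$ and using $c<1$ forces $\mesname(\set{y_i}{i\ge 1})=0$.

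Finally I would use~\eqref{sp-gmnc1} to extract a convergent subsequence $y_{i_j}\to y$. Then $\norm{y}=1$, so $y\ne 0$, and $y\in C$ since $C$ is closed. On the other hand $w/t_{i_j}-y_{i_j}\in C$ converges to $-y$, whence $-y\in C$ as well; since $C$ is pointed, $y\in C\cap(-C)=\{0\}$, contradicting $\norm{y}=1$. Therefore $S$ is bounded. The main obstacle is precisely the relative compactness step: without normality one cannot bound $\norm{y_i}$ directly from the order interval $[0,w]$, and the two devices that make the estimate go through are the record selection, which keeps all the rescaled preimages $z_i^{(l)}$ in the unit ball, and the submultiplicativity of $\mes[C]{\cdot}$, which drives the measure of noncompactness to zero as $l\to\infty$.
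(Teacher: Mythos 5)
Your proof is correct and follows essentially the same route as the paper's: a record subsequence of maximal norms, the rescaled preimages $h^{k_i-n}(\zeta)/\norm{h^{k_i}(\zeta)}$ trapped in the unit ball, the measure of noncompactness driven to zero by $\mes[C]{h^n}\to 0$, and the pointedness of $C$ yielding the final contradiction. The only cosmetic difference is that you phrase the decay of $\mes[C]{h^{lm}}$ via submultiplicativity with a fixed $m$, whereas the paper lets $n\to\infty$ directly.
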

\begin{proof}
We argue as on pages 46-47 in~\cite{nussbaum85}.
Let $S=\set{h^k(\zeta)}{k\geq 0}$ and assume, by way of contradiction,
that $S$ is unbounded in norm. It follows that there exists a strictly
increasing sequence of integers $\seq{k_i}{i\geq 0}$ such that
$\norm{h^{k_i}(\zeta)}\geq \norm{h^j(\zeta)}$ for $0\leq j\leq k_i$ and
$\norm{h^{k_i}(\zeta)}\to\infty$. Define 
\[z_i=\frac{h^{k_i}(\zeta)}{\norm{h^{k_i}(\zeta)}},
\qquad T=\set{z_i}{i\geq 0}
\enspace,
\]
and suppose that we can prove that $\closure{T}$ is
compact. Then, by taking a further subsequence, we can assume that
$z_i$ tends to $z$ as $i\to\infty$, where $\norm{z}=1$ and $z\in C$.
However, we have, since $\norm{h^{k_i}(\zeta)}\to\infty$,
$0\leq \frac{w}{\norm{h^{k_i}(\zeta)}}-z_i\to -z$, so $-z\in C$.
Since $C$ is proper, we have a contradiction.

Thus it suffices to prove that $\closure{T}$ is compact. 
Notice that if $n\geq 1$ is a positive integer, we can write
\[ T=\set{z_i}{i\leq n}\cup h^n\left(\left\{
\frac{h^{k_i-n}(\zeta)}{\norm{h^{k_i}(\zeta)}}\mid i>n\right\}\right)\enspace.\]
Setting $B:=\set{x\in X}{\norm{x}\leq 1}$, our assumptions imply that
$\frac{h^{k_i-n}(\zeta)}{\norm{h^{k_i}(\zeta)}}\in B\cap C$ for $i>n$,
so $T\subset \set{z_i}{i\leq n}\cup h^n(B\cap C)$.
The latter equation yields $\mesname(T)\leq \mes[C]{h^n} \mesname(B\cap C)$.
Because $\essp[C]{h}<1$, we know that $\mes[C]{h^n}\to 0$ as
$n\to\infty$, so we conclude that $\mesname(T)=0$ and $\closure{T}$ is compact.
\end{proof}

In the following corollary, recall that a \NEW{periodic point} $x$ of
a map $h$ is a point such that $h^n(x)=x$ for some positive 
integer $n$.

\begin{corollary}\label{sp-proper-fix}
Let $C$ be a proper cone in a Banach space $(X,\norm{\cdot})$ and
assume that $C$ is a sup-semilattice in the partial ordering induced by $C$.
Assume that $h:C\to C$ is continuous, homogeneous and order-preserving
in the partial ordering from $C$ and that
$\essp[C]{h}<\cspr[C]{h} $. Assume 
that there exists an integer $m$ such that
if $x\in C\setminus\{0\}$ is any periodic point of $h$,
$\set{h^j(x)}{j\geq 0}$ is contained in the union of
at most $m$ parts of $C$, i.e.\ there exist $u_1,\ldots, u_m\in C$,
$u_i$ dependent on $x$, with
\[ \set{h^j(x)}{j\geq 0}\subset \bigcup_{i=1}^{m} C_{u_i}\enspace.\]
Then there exists $y\in C\setminus\{0\}$ with $h(y)=r y$, $r=\cspr[C]{h}$.
\end{corollary}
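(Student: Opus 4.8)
The plan is to reduce the statement to the second part of Theorem~\ref{sp-th-add1-new} by exhibiting a vector $\zeta\in C\setminus\{0\}$ satisfying its two extra hypotheses. After rescaling $h$ by $r^{-1}$ we may assume $r=\cspr[C]{h}=1$, so that $\essp[C]{h}<1$ and hence $\mes[C]{h^\ell}<1$ for all large $\ell$. Exactly as in the proof of Theorem~\ref{sp-th-add1-new}, I would fix such an $\ell$, invoke Theorem~\ref{sp-th-nus2} to obtain a periodic point $x\in C\setminus\{0\}$ with $h^\ell(x)=x$, and set $z:=\bigvee_{j=0}^{\ell-1}h^j(x)$. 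Order-preservation together with the periodicity of $x$ gives $h(z)\ge z$, so $z$ already meets the first hypothesis; the whole difficulty is the second one, namely the boundedness of the orbit $\{h^k(z)\}_{k\ge 0}$.

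This is where the parts hypothesis is meant to enter. The orbit $O=\{h^j(x):j\ge 0\}$ is finite and, by assumption, meets at most $m$ parts $C_{u_1},\dots,C_{u_m}$; I would choose each $u_i$ to be an orbit point, so that $G:=h^\ell$ fixes every $u_i$ (since $h^\ell(h^s(x))=h^s(x)$), and, suprema of comparable elements remaining inside their part, one checks $w_0\le z\le B\,w_0$ for $w_0:=\bigvee_i u_i$ and some $B>0$, i.e.\ $z\sim w_0$. The natural attempt is then to produce an order bound $h^k(z)\le w$ valid for all $k$, and to upgrade it to the required norm boundedness by Lemma~\ref{sp-lemma-proper}.

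The main obstacle is that this order bound can genuinely fail. Because $h$ is nonlinear it does not preserve suprema across distinct parts: one only gets $h(w_0)\ge\bigvee_i h(u_i)=w_0$, with no matching upper estimate, and the nondecreasing sequence $h^k(z)$ may escape to infinity even though $\cspr[C]{h}=1$. (For instance, on $C=\R_+^3$ the map $h(a,b,c)=(b,\,a,\,\min(a,b)+c)$ satisfies all the hypotheses with $m=2$, yet for the periodic point $x=(1,0,0)$ the construction gives $z=(1,1,0)$ and $h^k(z)=(1,1,k)$, an unbounded orbit; the eigenvector $(0,0,1)$ is only its limiting direction.) Thus the genuine content of the corollary is to cope with this escape. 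The plan is to split into two cases: if $\{h^k(z)\}$ is order bounded, Lemma~\ref{sp-lemma-proper} and Theorem~\ref{sp-th-add1-new} conclude at once; otherwise one recovers the eigenvector from the asymptotic direction of the orbit, by passing to cluster points of the normalized iterates $h^k(z)/\norm{h^k(z)}$. The confinement of periodic orbits to finitely many parts, combined with $\essp[C]{h}<1$, is what should make $\{h^k(z)/\norm{h^k(z)}\}$ relatively compact, and the subexponential growth forced by $\cspr[C]{h}=1$ is what should pin the eigenvalue of the limit to $1$.

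The step I expect to fight with is precisely this last identification, which is more delicate than in the normal case: since $C$ is only assumed proper, an order relation between successive iterates does not control their norms, so showing that a limiting direction is a true fixed point (rather than a point merely satisfying $h(y)\ge y$, as produced by the first part of Theorem~\ref{sp-th-add1-new}) requires exploiting the finiteness of the parts visited, together with the completeness and nonexpansiveness supplied by Proposition~\ref{sp-prop-thompson} and Lemma~\ref{sp-lemma-nonexpan}, to force $\norm{h^{k+1}(z)}/\norm{h^k(z)}\to 1$ along a suitable subsequence. Undoing the normalization then yields $y\in C\setminus\{0\}$ with $h(y)=r\,y$, as required.
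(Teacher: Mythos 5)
Your reduction to the second part of Theorem~\ref{sp-th-add1-new} is the right frame, and you have correctly located the crux: hypothesis \laba{2} of that theorem, the boundedness of the orbit of $\zeta=\bigvee_{j}h^j(x)$, which your example on $\R_+^3$ shows can indeed fail when the periodic orbit of $x$ straddles several parts. But your proposed way out --- a dichotomy whose second branch extracts the eigenvector as a cluster point of the normalized iterates $h^k(z)/\norm{h^k(z)}$ --- is not a proof. The hypothesis of the corollary only confines \emph{periodic} orbits to finitely many parts, so it gives no control on the parts visited by $\set{h^k(z)}{k\geq 0}$; and even granting relative compactness of a normalized subsequence (which one can get from $\essp[C]{h}<1$ along a norm-maximizing subsequence, as in the proof of Lemma~\ref{sp-lemma-proper}), nothing identifies a cluster point as an eigenvector with eigenvalue exactly $1$, rather than merely a point $y$ with $h(y)\geq y$. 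That is precisely the step you say you ``expect to fight with''; it is the whole content of the statement and it is left open.

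The device you are missing is arithmetic, not analytic: choose the period to be a \emph{prime} $p$ with $p>m$ and $\mes[C]{h^p}<1$, and take $x\in C\setminus\{0\}$ with $h^p(x)=x$ from Theorem~\ref{sp-th-nus2}. The orbit $\set{h^j(x)}{0\leq j\leq p-1}$ then has $p>m$ elements lying in at most $m$ parts, so by pigeonhole $h^a(x)\sim h^b(x)$ for some $0\leq a<b<p$; since $h$ is order-preserving and homogeneous it preserves comparability, whence $x\sim h^{j}(x)$ with $j=b-a$ and $0<j<p$, and primality of $p$ gives $kj\equiv 1\pmod p$ for some $k$, so $x\sim h(x)$ and hence the \emph{entire} orbit lies in a single part. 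One then gets $\beta>0$ with $h^j(x)\leq\beta h^k(x)$ for all $j,k$, hence $\zeta\leq\beta h^k(x)$, and by induction $h^s(\zeta)\leq\beta h^k(x)\leq\beta\zeta$ for all $s\geq 0$; Lemma~\ref{sp-lemma-proper} converts this order bound into the norm bound required by \laba{2}, and Theorem~\ref{sp-th-add1-new} finishes. Note that in your own example the only points of odd prime period are the $(0,0,c)$, which are already fixed: the prime choice of the period is exactly what rules out the escape you exhibited.
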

\begin{proof}
By replacing $h$ by $r^{-1} h$, where $r=\cspr[C]{h}$, we 
can assume that $r=\cspr[C]{h}=1$.
Select a prime integer $p$ such that $\mesname(h^p)<1$ and $p> m$,
where $m$ is as in the statement of the corollary.
Theorem~\ref{sp-th-nus2} %
implies that there exists $x\in C\setminus\{0\}$
such that $h^p(x)=x$. We claim that $h^j(x)$ is comparable to $x$ for all
$j\geq 0$.
Since $p>m$, there exists $j$ with $x\sim h^j(x)$ and $0<j<p$.
Since $h$ is order-preserving and homogeneous, it follows that
$x\sim h^{kj}(x)$ for all integers $k\geq 1$. Since $p$ is
prime, there exists $k\geq 1$ such that $kj\equiv 1 \mod p$, so
$x\sim h(x)$, and applying $h$ repeatedly we see that $x\sim h^s(x)$
for all $s\geq 0$.

Since $x\sim h^j(x)$ for $0\leq j\leq p-1$, there exists
$\beta>0$ such that $h^j(x)\leq \beta h^k(x)$ for
$0\leq j\leq p-1$ and $0\leq k\leq p-1$.
If we write
$\zeta=x\vee h(x)\vee\cdots\vee h^{p-1}(x)$, it follows
that $\zeta\leq \beta h^k(x)$ for $0\leq k\leq p-1$.
A simple induction on $s$ implies that
$h^s(\zeta)\leq \beta h^k(x)$ for $0\leq k\leq p-1$ and all $s\geq 0$.
The latter equation implies that $h^s(\zeta)\leq \beta \zeta$
for all $s\geq 0$. 
So Lemma~\ref{sp-lemma-proper} implies that $\set{h^s(\zeta)}{s\geq 0}$ 
is bounded. The same argument as in Theorem~\ref{sp-th-add1-new} shows that
$h(\zeta)\geq \zeta$; and $\zeta\in C\setminus\{0\}$ because $\zeta\geq x$.
Theorem~\ref{sp-th-add1-new}  now implies that there exists 
$y\in C\setminus\{0\}$ with $h(y)=y$.
\end{proof}

\begin{remark}
Let hypotheses be as in Corollary~\ref{sp-proper-fix}, but do not
assume the existence of an integer $m$ as in the statement of 
Corollary~\ref{sp-proper-fix}. Instead assume that there exists an integer
$\mu\geq 1$ and an integer $N\geq 1$ and points $v_1,\ldots, v_\mu$
in $C$ such that 
\[h^N(C)\subset \bigcup_{i=1}^{\mu} C_{v_i}\enspace . \]
Then it is easy to see that the hypotheses of Corollary~\ref{sp-proper-fix}
are satisfied with $\mu=m$.
A very special case, which is sometimes assumed in the literature
is to assume that $\Cint$, the interior of $C$, is nonempty
and $h^N(C)\subset \{0\}\cup \Cint$ for some integer $N$.
\end{remark}

\section{The spectral radius of order-preserving homogeneous self-maps of a Banach space}\label{sp-sec-X}
We now consider the situation in which $f$ is defined on the whole Banach
space $X$, and determine the spectral radius of $f$ by considering
the restriction of the map to the cones $C$ and $-C$. Recall 
our convention to write $\cspr{h}$ for $\cspr[X]{h}$, etc.

\begin{lemma} \label{sp-lem-add1}
Let $C$ be a reproducing normal cone in a Banach space
$(X,\norm{\cdot})$, and let $h:X\to X$ be a map which is 
continuous, positively homogeneous, and order-preserving with respect to 
$C$. Then, $h(C)\subset C$, $h(-C)\subset -C$ and
\begin{equation}\label{sp-lem-add11}
\bonsall{h}=\max (\bonsall[C]{h}, \bonsall[-C]{h})
=\max (\cspr[C]{h}, \cspr[-C]{h})=\cspr{h}\enspace .
\end{equation}
\end{lemma}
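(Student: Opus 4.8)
The plan is to dispatch the set inclusions first and then obtain the string of equalities in~\eqref{sp-lem-add11} by a short sandwiching argument, reducing the only substantial point to a norm estimate based on a bounded decomposition of $X$.

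First I would record the inclusions. Homogeneity forces $h(0)=0$ (put $y=0$ in $h(ty)=th(y)$), so for $x\in C$ one has $0\leq x$ and hence $0=h(0)\leq h(x)$, giving $h(C)\subset C$; symmetrically, $x\leq 0$ implies $h(x)\leq h(0)=0$, giving $h(-C)\subset -C$. The same structure shows that $h$ restricted to $-C$ meets the hypotheses of Theorem~\ref{sp-prop-nus1} relative to $-C$: the ordering $\leq_{-C}$ is the reverse of $\leq_C$, so $h$ remains order-preserving for it, and $-C$ is again a proper, reproducing, normal cone with the same normality constant. Applying Theorem~\ref{sp-prop-nus1} on both $C$ and $-C$ then yields $\bonsall[C]{h}=\cspr[C]{h}$ and $\bonsall[-C]{h}=\cspr[-C]{h}$, which is exactly the central equality of~\eqref{sp-lem-add11}.

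It remains to tie these to the quantities over $X$. The easy inequalities come for free: from $C,-C\subset X$ one gets $\norm[C]{h^k},\norm[-C]{h^k}\leq\norm[X]{h^k}$ and likewise $\cspr[C]{h},\cspr[-C]{h}\leq\cspr{h}$, while the bound $\norm{h^k(x)}\leq\norm[X]{h^k}\,\norm{x}$ gives $\mu(x)\leq\bonsall{h}$ for every $x$, hence $\cspr{h}\leq\bonsall{h}$. The one inequality carrying all the content is $\bonsall{h}\leq\max(\bonsall[C]{h},\bonsall[-C]{h})$. To prove it I would use that a closed reproducing cone is non-flat: there is $\gamma>0$ such that every $x\in X$ admits a decomposition $x=u-v$ with $u,v\in C$ and $\norm{u},\norm{v}\leq\gamma\norm{x}$ (a standard consequence of the open mapping theorem; when $\Cint\neq\emptyset$ it is the explicit construction in the proof of Proposition~\ref{sp-normalempty}, with $\gamma=\tfrac12+\tfrac{\norm{u_0}}{2\epsilon}$ for an interior point $u_0$ and $B_\epsilon(u_0)\subset C$). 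Then $-v\leq x\leq u$, so monotonicity of $h^k$ gives $h^k(-v)\leq h^k(x)\leq h^k(u)$, and normality applied to $0\leq h^k(x)-h^k(-v)\leq h^k(u)-h^k(-v)$ produces
\[
\norm{h^k(x)}\leq M\norm{h^k(u)}+(M+1)\norm{h^k(-v)}\leq \gamma\norm{x}\bigl(M\norm[C]{h^k}+(M+1)\norm[-C]{h^k}\bigr)\enspace,
\]
whence $\norm[X]{h^k}\leq\gamma(2M+1)\max(\norm[C]{h^k},\norm[-C]{h^k})$. Taking $k$-th roots, letting $k\to\infty$, and using $\max(a_k,b_k)^{1/k}=\max(a_k^{1/k},b_k^{1/k})$ together with $[\gamma(2M+1)]^{1/k}\to1$ gives the claim.

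Finally I would assemble the chain
\[
\cspr{h}\leq\bonsall{h}=\max(\bonsall[C]{h},\bonsall[-C]{h})=\max(\cspr[C]{h},\cspr[-C]{h})\leq\cspr{h}\enspace,
\]
which forces equality everywhere and is precisely~\eqref{sp-lem-add11}. The main obstacle is the norm estimate of the third paragraph: securing the bounded (non-flat) decomposition of $X=C-C$ and then threading the normality constant through the order sandwich without losing the exact exponential rate in the $k$-th root limit. Once that estimate is in hand, everything else is bookkeeping with suprema and limits.
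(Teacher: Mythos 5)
Your proof is correct and follows essentially the same route as the paper: the inclusions from $h(0)=0$, the central equality from Theorem~\ref{sp-prop-nus1} applied to $C$ and to $-C$, and the key inequality $\bonsall{h}\leq\max(\bonsall[C]{h},\bonsall[-C]{h})$ via the order sandwich $-v\leq x\leq u$, normality, and a decomposition $x=u-v$ with $\norm{u},\norm{v}\leq\gamma\norm{x}$ --- which is exactly the equivalence of the norm $\trn x\trn=\inf(\norm{x^+}+\norm{x^-})$ with $\norm{\cdot}$ that the paper cites from Schaefer. The only cosmetic difference is that the paper also proves $\cspr{h}\leq\max(\cspr[C]{h},\cspr[-C]{h})$ directly (for which an arbitrary, unbounded decomposition suffices, since the constants wash out in the $n$-th root), whereas you deduce it from the closed chain of inequalities; both are fine.
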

\begin{proof}
Since $h$ is continuous and positively homogeneous, $h(0)=0$. Since $h$ is order
preserving with respect to $C$, we deduce that $h(C)\subset C$ and
$h(-C)\subset -C$, so that $\bonsall[C]{h},\;\bonsall[-C]{h},\;
\cspr[C]{h}$ and $\cspr[-C]{h}$ are well defined.
Moreover, $h$ is also order-preserving with respect to $-C$.
It follows from Theorem~\ref{sp-prop-nus1} that
 $\bonsall[C]{h}=\cspr[C]{h}$ and $ \bonsall[-C]{h}=\cspr[-C]{h}$,
which shows the central equality in~\eqref{sp-lem-add11}.
By definition, 
$\bonsall{h}\geq \max (\bonsall[C]{h}, \bonsall[-C]{h})$ and
$\cspr{h}\geq \max (\cspr[C]{h}, \cspr[-C]{h})$.
It remains to show the reverse inequalities.

Let $x\in X$. Since $C$ is reproducing, there exist $x^+$ and $x^-\in C$
such that $x=x^+-x^-$. Then, 
$-x^-\leq x\leq x^+$. %
 Since $h$ is order-preserving, we get
$h^n(-x^-)\leq h^n(x)\leq h^n(x^+)$ for all $n\in \N$, hence
$0\leq h^n(x)-h^n(-x^-)\leq h^n(x^+)-h^n(-x^-)$.
Since $C$ is normal, it follows that
$\norm{h^n(x)-h^n(-x^-)}\leq M\norm{h^n(x^+)-h^n(-x^-)}$
for some positive constant $M$,
hence 
\begin{equation}\norm{h^n(x)}\leq M\norm{h^n(x^+)}+ (M+1) \norm{h^n(-x^-)}\enspace .
\label{sp-lem-add12}\end{equation}
Taking the power $1/n$ of this inequality and passing to 
the limit when $n$ goes to infinity, we obtain:
$\mu(x)\leq \max(\mu(x^+), \mu(x^-))\leq \max (\cspr[C]{h},\cspr[-C]{h})$,
where $\mu$ is defined as in~\eqref{sp-eq31-cspr}.
Therefore, $\cspr{h} \leq \max (\cspr[C]{h},\cspr[-C]{h})$.

Using~\eqref{sp-lem-add12} again, we deduce that 
\begin{align}
 \norm{h^n(x)}& \leq M\norm[C]{h^n} \norm{x^+}+ (M+1) \norm[-C]{h^n} \norm{x^-}\nonumber \\
&\leq (M+1) \max(\norm[C]{h^n},\norm[-C]{h^n} ) (\norm{x^+}+\norm{x^-})\enspace .
\label{sp-lem-add13}
\end{align}
Since $C$ is a reproducing proper cone, the quantity
\[ \trn x\trn:=\inf\set{\norm{x^+}+\norm{x^-}}{ x^+, x^-\in C, \; x=x^+-x^-},
\quad x\in X\enspace ,\]
defines a norm on $X$ which is equivalent to the initial norm $\norm{\cdot}$
(see~\cite{schaefer2}).
Hence, there exists $M_0>0$ such that $\trn x\trn \leq M_0\norm{x}$ for 
all $x\in X$. Since~\eqref{sp-lem-add13} holds for all $x^+,x^-\in C$ 
such that $x=x^+-x^-$, we deduce that
$\norm{h^n(x)}\leq (M+1) \max(\norm[C]{h^n},\norm[-C]{h^n} ) \trn x\trn 
\leq (M+1) M_0\max(\norm[C]{h^n},\norm[-C]{h^n} ) \norm{x}$, and since this
holds for all $x\in X$, we obtain $\norm{h^n}\leq (M+1)
 M_0\max(\norm[C]{h^n},\norm[-C]{h^n} )$. It follows that
$\bonsall{h}\leq \max (\bonsall[C]{h}, \bonsall[-C]{h})$.
\end{proof}
The following proposition identifies
a situation in which the uniqueness
of the fixed point of a map
implies that every orbit converges to this fixed point 
with a geometric rate.
\begin{theorem}[Uniqueness implies contraction]\label{sp-prop-add1}
Let $C$ be a reproducing normal cone in a Banach space $(X,\norm{\cdot})$,
and let $h:X\to X$ be a map that is continuous, positively homogeneous,
and order-preserving with respect to $C$. Then, $\bonsall{h}=\cspr{h}$ 
and 
\begin{align}\label{sp-prop-add11}
\mesname(h)< \cspr{h} \implies 
\left\{ \begin{array}{l}
\cspr{h}= \eigenvalspr{h}\mrm{ and there exists }\\
 x\in (C\cup -C)\setminus\{0\},\; h(x)= \cspr{h} x
\enspace .
\end{array}\right. 
\end{align}
In particular, if we assume that
$\cspr{h}\leq 1$ and that $\mesname(h)<1$, we have:
\begin{align}\label{sp-prop-add12}
h  \mrm{ has a unique fixed point, i.e.\ } (h(x)=x\Rightarrow x=0)
\;
\iff \cspr{h} <1 \enspace .
\end{align}
\end{theorem}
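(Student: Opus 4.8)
The statement bundles three assertions, which I would dispatch in turn. The identity $\bonsall{h}=\cspr{h}$ is immediate, being the outermost equality in~\eqref{sp-lem-add11} of Lemma~\ref{sp-lem-add1}; it needs no extra hypothesis. For the implication~\eqref{sp-prop-add11} I first record that $\mesname(h)\geq 0$ forces $\cspr{h}>0$, so I may rescale and set $g:=(\cspr{h})^{-1}h$, again continuous, homogeneous and order-preserving with respect to $C$, now with $\cspr{g}=1$ and $\mes{g}<1$ (both quantities being homogeneous of degree one in the map). By Lemma~\ref{sp-lem-add1}, $\cspr{g}=\max(\cspr[C]{g},\cspr[-C]{g})=1$; since the hypotheses are symmetric under $C\mapsto -C$ (both cones are normal and reproducing, and $g$ is order-preserving for each) while the conclusion concerns $C\cup -C$, I may assume without loss of generality that the maximum is attained by $C$, i.e.\ $\cspr[C]{g}=1$.

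The crux is then a single clean application of Theorem~\ref{sp-th-nus2} \emph{with $m=1$}, and the point I would stress is that the hypothesis controls the measure of $g$ itself, not merely its essential spectral radius. Because every bounded $A\subset C$ is a bounded subset of $X$, the infimum defining $\mes[C]{g}$ ranges over a larger feasible set than the one defining $\mes{g}$, whence $\mes[C]{g}\leq\mes{g}<1=\cspr[C]{g}$. Noting that $g(C)\subset C$ by Lemma~\ref{sp-lem-add1} and that $C$ is proper (being normal), Theorem~\ref{sp-th-nus2} applied to $g\colon C\to C$ with $m=1$ yields $x\in C\setminus\{0\}$ with $g(x)=\cspr[C]{g}\,x=x$, that is $h(x)=\cspr{h}\,x$ with $x\in(C\cup -C)\setminus\{0\}$. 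Since an eigenvalue $\lambda\geq 0$ with eigenvector $x\neq 0$ gives $\|h^k(x)\|^{1/k}\to\lambda$, so that $\mu(x)=\lambda\leq\cspr{h}$ in the notation of~\eqref{sp-eq31-cspr}, the eigenvector just produced shows $\eigenvalspr{h}=\cspr{h}$, completing~\eqref{sp-prop-add11}. I regard this $m=1$ observation as the main point: it sidesteps the genuinely hard passage from a periodic point to a fixed point, which is exactly what forces the sup-semilattice or ``parts'' hypotheses in Theorem~\ref{sp-th-add1-new} and Corollary~\ref{sp-proper-fix}. Here a bona fide eigenvector is obtained directly, precisely because the bound is on the first iterate.

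Finally, for~\eqref{sp-prop-add12}, assume $\cspr{h}\leq 1$ and $\mesname(h)<1$. If $\cspr{h}<1$, then by the part already proved $\bonsall{h}=\cspr{h}<1$, so $\norm{h^n}<1$ for some $n\geq 1$; any fixed point $x$ then satisfies $\norm{x}=\norm{h^n(x)}\leq\norm{h^n}\,\norm{x}$, forcing $x=0$, so $0$ is the unique fixed point. This is the very argument used at the end of the proof of Proposition~\ref{sp-prop1}, and it needs no uniform continuity, which is why it applies here to $h\colon X\to X$. Conversely, suppose $0$ is the unique fixed point but, for contradiction, $\cspr{h}=1$. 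Then $\mesname(h)<1=\cspr{h}$, so~\eqref{sp-prop-add11} provides $x\in(C\cup -C)\setminus\{0\}$ with $h(x)=\cspr{h}\,x=x$, a nonzero fixed point, contradicting uniqueness. Hence $\cspr{h}<1$, and~\eqref{sp-prop-add12} follows.
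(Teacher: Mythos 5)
Your proposal is correct and follows essentially the same route as the paper: the first equality comes from Lemma~\ref{sp-lem-add1}, the implication~\eqref{sp-prop-add11} reduces to $C$ or $-C$ via that lemma and then applies Theorem~\ref{sp-th-nus2} with $m=1$ using $\mes[C]{h}\leq\mesname(h)$, and~\eqref{sp-prop-add12} follows as you describe. The only cosmetic differences are your normalization $\cspr{h}=1$ and your use of $\bonsall{h}<1$ for the converse direction where the paper simply invokes $\eigenvalspr{h}\leq\cspr{h}$; both are equally valid.
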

\begin{proof}
It follows from Lemma~\ref{sp-lem-add1} that $\bonsall{h}=\cspr{h}$.
Assume first that $\mesname(h)< \cspr{h}$. 
By Lemma~\ref{sp-lem-add1}, we deduce that either $\cspr{h}=
\cspr[C]{h}$ or $\cspr{h}=\cspr[-C]{h}$. Consider for instance the
case in which $\cspr{h}= \cspr[C]{h}$. Using
$\mes[C]{h}\leq \mesname(h)$ and Theorem~\ref{sp-th-nus2}
 for $m=1$, we deduce that
$\cspr[C]{h}= \eigenvalspr[C]{h}$ and that there exists 
$x\in C\setminus\{0\}$ such that $h(x)= \cspr[C]{h} x$,
which shows~\eqref{sp-prop-add11}.
The case where $\cspr{h}=\cspr[-C]{h}$ is similar.
Since $\eigenvalspr{h}\leq \cspr{h}$ and there exists $x\in 
X\setminus\{0\}$ such that $h(x)= \cspr{h} x$, we obtain the equality
$\eigenvalspr{h}=\cspr{h}$.

Assume now that $\cspr{h}\leq 1$ and that $\mesname(h)<1$.
Then, if  $\cspr{h} \not <1$, we get that $\cspr{h} =1$, hence,
by~\eqref{sp-prop-add11}, there exists  $x\in (C\cup -C)\setminus\{0\}$
such that $h(x)= \cspr{h} x=x$, that is a non zero fixed point
of $h$, which shows the $\Rightarrow$ implication in~\eqref{sp-prop-add12}.
The converse implication follows from $\eigenvalspr{h}\leq \cspr{h}$.
\end{proof}
\begin{proposition}\label{sp-th-add1}
Let $C$ be a reproducing normal cone in a Banach space $(X,\norm{\cdot})$,
such that $X$ is a lattice for the order defined by $C$.
Let $h:X\to X$ be a map which is continuous,
positively homogeneous, order-preserving with respect to $C$,
and such that for all $x\in X$, the orbit $\{h^k(x)\}_{k\in\N}$
is bounded. 
Then, $\bonsall{h}=\cspr{h}\leq 1$.
Moreover, if $\essp{h}<1$ and $\cspr{h}=1$, then
there exists $x\in (C\cup -C)\setminus \{0\}$
such that $h(x)=x$, and in particular $\cspr{h}=\eigenvalspr{h}$.
Finally, if $\essp{h}<1$, then~\eqref{sp-prop-add12} holds.
\end{proposition}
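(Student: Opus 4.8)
The plan is to obtain all three assertions from Lemma~\ref{sp-lem-add1} and Theorem~\ref{sp-th-add1-new}, with the boundedness-of-orbits hypothesis entering only to force $\cspr{h}\le 1$ and to supply condition \laba{2} of Theorem~\ref{sp-th-add1-new}. First I would establish $\bonsall{h}=\cspr{h}\le 1$. The equality $\bonsall{h}=\cspr{h}$ is exactly the conclusion of Lemma~\ref{sp-lem-add1}. For the bound, fix $x\in X$; by hypothesis the orbit $\{h^k(x)\}_{k\in\N}$ is bounded, say $\norm{h^k(x)}\le M_x$ for all $k$, so that $\mu(x)=\limsup_{k\to\infty}\norm{h^k(x)}^{1/k}\le\limsup_k M_x^{1/k}=1$, with $\mu$ as in~\eqref{sp-eq31-cspr}. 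Taking the supremum over $x\in X$ gives $\cspr{h}\le 1$.

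Next, for the middle assertion, I would assume $\essp{h}<1$ and $\cspr{h}=1$. By Lemma~\ref{sp-lem-add1}, $\max(\cspr[C]{h},\cspr[-C]{h})=\cspr{h}=1$, so at least one of these cone spectral radii equals $1$; I treat the case $\cspr[C]{h}=1$, the case of $-C$ being symmetric under the reflection $x\mapsto -x$, which carries $C$ onto the reproducing normal cone $-C$, the order $\leqc$ to its reverse (for which $h$ is again order-preserving, as noted in the proof of Lemma~\ref{sp-lem-add1}, and for which $X$ is again a lattice). Since $X$ is a lattice and $C$ is upward closed, $x\vee y\in C$ whenever $x,y\in C$, so $C$ is a sup-semilattice in its own ordering. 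As bounded subsets of $C$ are bounded subsets of $X$, we have $\mes[C]{h^m}\le\mes[X]{h^m}$ for all $m$, hence $\essp[C]{h}\le\essp{h}<1=\cspr[C]{h}$. Thus the first part of Theorem~\ref{sp-th-add1-new} applies to $h:C\to C$ and yields $z\in C\setminus\{0\}$ with $h(z)\ge\cspr[C]{h}\,z=z$, which is condition \laba{1}; and since the orbit $\{h^k(z)\}_{k\geq 0}$ is bounded in $X$ and $\cspr[C]{h}=1$, condition \laba{2} holds as well. The second part of Theorem~\ref{sp-th-add1-new} then produces $x\in C\setminus\{0\}$ with $h(x)=\cspr[C]{h}\,x=x$, so $x\in(C\cup -C)\setminus\{0\}$ is a nonzero fixed point. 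This gives $\eigenvalspr{h}\ge 1=\cspr{h}$, and combined with the elementary inequality $\eigenvalspr{h}\le\cspr{h}$ (an eigenvector with eigenvalue $\lambda$ yields $\mu=\lambda$), we get $\eigenvalspr{h}=\cspr{h}$.

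Finally I would deduce~\eqref{sp-prop-add12} under the standing assumption $\essp{h}<1$. For the implication $\cspr{h}<1\Rightarrow(h(x)=x\Rightarrow x=0)$, note that $\bonsall{h}=\cspr{h}<1$ forces $\norm{h^n}<1$ for some $n$, since $\bonsall{h}=\inf_k\norm{h^k}^{1/k}$; then $h(x)=x$ gives $h^n(x)=x$ and $\norm{x}=\norm{h^n(x)}\le\norm{h^n}\,\norm{x}$, whence $x=0$. For the converse I argue by contraposition: as $\cspr{h}\le 1$ by the first part, the failure of $\cspr{h}<1$ means $\cspr{h}=1$, and the middle assertion (which uses $\essp{h}<1$) then furnishes a nonzero fixed point, contradicting uniqueness.

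The norm estimates and the monotonicity of the spectral radii are routine. The step requiring care, and where the lattice and boundedness hypotheses are genuinely used, is the verification of the hypotheses of Theorem~\ref{sp-th-add1-new}: recognizing $C$ and $-C$ as sup-semilattices, checking $\essp[C]{h}<\cspr[C]{h}$ via the monotonicity of $\mesname$ under restriction, and—most importantly—reading off condition \laba{2} directly from the assumed boundedness of orbits. I expect this bookkeeping, rather than any new estimate, to be the main content of the proof.
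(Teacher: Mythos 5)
Your proof is correct and follows essentially the same route as the paper: boundedness of orbits gives $\cspr{h}\le 1$, Lemma~\ref{sp-lem-add1} gives the equality with $\bonsall{h}$ and the reduction to $C$ or $-C$, Theorem~\ref{sp-th-add1-new} supplies the fixed point, and the last claim is handled as in Theorem~\ref{sp-prop-add1}. The only difference is that you spell out the verification of conditions \laba{1} and \laba{2} of Theorem~\ref{sp-th-add1-new} (via its first part and the boundedness of orbits), which the paper leaves implicit, and you prove the easy direction of~\eqref{sp-prop-add12} via $\norm[X]{h^n}<1$ rather than via $\eigenvalspr{h}\le\cspr{h}$; both are fine.
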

\begin{proof}
Since all the orbits of $h$ are bounded,
$\cspr{h}\leq 1$.
By Lemma~\ref{sp-lem-add1}, $h(C)\subset C$, 
$h(-C)\subset -C$, and
$\bonsall{h}=\cspr{h}=\max(\cspr[C]{h},\cspr[-C]{h})$. 
Assume that $\cspr{h}=1$ and that $\essp{h}<1$.
Then, $\cspr[C]{h}=1$ or $\cspr[-C]{h}=1$. 
Assume first that $\cspr[C]{h}=1$.
Since $X$ is a lattice, $C$ is a sup-semilattice
for the order defined by $C$ and since $\essp[C]{h}\leq \essp{h}<1$,
Theorem~\ref{sp-th-add1-new} shows that there exists $x\in C\setminus\{0\}$
such that $h(x)=x$. When $\cspr[-C]{h}=1$,
a symmetrical argument shows that there exists $x\in -C\setminus\{0\}$
such that $h(x)=x$. Since $\eigenvalspr{h}\leq \cspr{h}$,
this implies $\eigenvalspr{h}=\cspr{h}$.
If now $\essp{h}<1$ only, the same arguments as in the proof of
Theorem~\ref{sp-prop-add1} show that~\eqref{sp-prop-add12} holds.
\end{proof}
\begin{remark}
When $C$ is a reproducing normal cone in a Banach space $(X,\norm{\cdot})$,
the condition that $C$ is a sup-semilattice for the order defined by $C$ 
together with the condition
that $-C$ is a sup-semilattice for the order defined by $-C$
is equivalent to the condition assumed in Proposition~\ref{sp-th-add1}
that $X$ is a lattice for the order defined by $C$.
When $X$ is an AM-space with unit, i.e.,
when $X$ is the space of continuous functions
on a compact space, equipped with the sup-norm~\cite{aliprantis},
and when $C=X^+$, then $C$ and $X$ satisfy
the assumptions of Proposition~\ref{sp-th-add1}.
\end{remark}

We also have:
\begin{lemma} \label{sp-lem-add2}
Let $C$ be a normal cone in a Banach space $(X,\norm{\cdot})$, and let
$g$ and $h:C\to C$ be two maps which are continuous and homogeneous.
Assume that $g$ is order-preserving and that $g\leq h$, then
\begin{equation} \label{sp-lem-add21}
\bonsall[C]{g}= \cspr[C]{g}\leq \cspr[C]{h}\leq \bonsall[C]{h}
\enspace .
\end{equation}
\end{lemma}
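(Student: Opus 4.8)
The asserted chain factors into three parts, two of which are immediate from results already recorded. The outer equality $\bonsall[C]{g}=\cspr[C]{g}$ is precisely the first case of Theorem~\ref{sp-prop-nus1}: the cone $C$ is normal and $g$ is continuous, homogeneous, and order-preserving, which is exactly the hypothesis under which that theorem guarantees the coincidence of Bonsall's cone spectral radius with the cone spectral radius. At the other end, the inequality $\cspr[C]{h}\leq\bonsall[C]{h}$ is one of the elementary inequalities of Proposition~\ref{sp-prop-easy}; here $C$ is normal, hence proper, hence in particular convex and pointed, and $h$ is homogeneous, so that proposition applies directly, with no use of any order hypothesis on $h$.

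The substance of the lemma is therefore the middle inequality $\cspr[C]{g}\leq\cspr[C]{h}$, and the plan is to compare the orbits of $g$ and $h$ pointwise. First I would show by induction on $k$ that
\[
g^k(x)\leq h^k(x)\qquad\text{for all }x\in C\text{ and all }k\geq 0 .
\]
The base case $k=0$ is trivial. For the inductive step I would use both hypotheses on $g$ in succession: assuming $g^k(x)\leq h^k(x)$, applying the \emph{order-preserving} map $g$ gives $g^{k+1}(x)=g(g^k(x))\leq g(h^k(x))$, and the hypothesis $g\leq h$ evaluated at the point $h^k(x)\in C$ gives $g(h^k(x))\leq h(h^k(x))=h^{k+1}(x)$, whence $g^{k+1}(x)\leq h^{k+1}(x)$.

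Since $g$ maps $C$ into $C$, we also have $0\leq g^k(x)$, so that $0\leq g^k(x)\leq h^k(x)$, and normality of $C$ supplies a constant $M$ with $\norm{g^k(x)}\leq M\norm{h^k(x)}$ for every $k$. Taking $k$th roots and passing to the $\limsup$, the factor $M^{1/k}\to 1$ is absorbed, giving
\[
\limsup_{k\to\infty}\norm{g^k(x)}^{1/k}\leq \limsup_{k\to\infty}\norm{h^k(x)}^{1/k}
\]
for every $x\in C$; taking the supremum over $x\in C$ and recalling the definition~\eqref{sp-eq31-cspr} of the cone spectral radius yields $\cspr[C]{g}\leq\cspr[C]{h}$. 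I expect no serious obstacle here; the only points deserving care are that the comparison of iterates rests on $g$, and not $h$, being order-preserving, so the order hypothesis must be invoked on the \emph{inner} map at each step of the induction, and that the normality constant is rendered asymptotically harmless precisely through the cancellation $M^{1/k}\to 1$ in the limit.
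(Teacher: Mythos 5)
Your proposal is correct and matches the paper's own argument essentially step for step: the outer equality and inequality are dispatched by Theorem~\ref{sp-prop-nus1} and Proposition~\ref{sp-prop-easy} respectively, and the middle inequality is obtained by the same induction $0\leq g^k(x)\leq h^k(x)$ (applying the order-preserving map $g$ to the inner iterate and then using $g\leq h$ at $h^k(x)$), followed by normality and the cancellation of the constant in the $k$th root. Nothing to add.
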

\begin{proof}
The equality in~\eqref{sp-lem-add21} follows from Theorem~\ref{sp-prop-nus1} 
and the last inequality holds in general.
Let $x\in C$. We show by induction on $n$
that $0 \leq g^n(x)\leq h^n(x)$ holds for all $n\geq 1$. 
Since $x\in C$, $g(C)\subset C$,
and $g\leq h$, we get $0\leq g(x)\leq h(x)$. Assume now
that $0 \leq g^n(x)\leq h^n(x)$ for some $n\geq 1$.
Then, applying $g$, which is order-preserving,
we get $g(0)\leq g^{n+1}(x)\leq g(h^n(x))$. 
Since $g\leq h$, we get $g^{n+1}(x)\leq h^{n+1}(x)$,
and since $0=g(0)$, we deduce that $0\leq g^{n+1}(x)$,
which concludes the induction. 
Since $C$ is normal, we deduce from $0\leq g^n(x)\leq h^n(x)$
that $\norm{g^n(x)}\leq M\norm{h^n(x)}$
for some positive constant $M$.
Since this holds for all $x\in C$, it follows that 
$\cspr[C]{g}\leq \cspr[C]{h}$.
\end{proof}

We say that a map $f:X\to X$ is \NEW{convex} (for the order of $C$) if
$f(t x+(1-t)y)\leq tf(x)+(1-t) f(y)$ for all $0\leq t\leq 1$, $x,y\in X$.
\begin{corollary}\label{sp-cor-add2}
Let $C$ be a reproducing normal cone in a Banach space 
$(X,\norm{\cdot})$, and let
$f:X\to X$ be a map which is continuous, positively homogeneous,
order-preserving.
Assume also that for all $x\in C$, $(f(x)+f(-x))$ is an element
of $C$.
Then, $\bonsall{f} =\bonsall[C]{f}=
\cspr{f} =\cspr[C]{f}$.
\end{corollary}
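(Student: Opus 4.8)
The plan is to reduce everything to the already-established identities of Lemma~\ref{sp-lem-add1} and then to use convexity to rule out the cone $-C$. By Lemma~\ref{sp-lem-add1}, since $C$ is a reproducing normal cone and $f$ is continuous, homogeneous and order-preserving, one has $f(C)\subset C$, $f(-C)\subset -C$, and
\[
\bonsall{f}=\cspr{f}=\max(\cspr[C]{f},\cspr[-C]{f}),
\]
while Theorem~\ref{sp-prop-nus1} yields $\bonsall[C]{f}=\cspr[C]{f}$. Hence it suffices to establish the single inequality $\cspr[-C]{f}\leq \cspr[C]{f}$, for then all four quantities collapse to $\cspr[C]{f}$.

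The device I would exploit is that a positively homogeneous convex map is \emph{subadditive}: from homogeneity and convexity, $f(x+y)=2f(\tfrac{x+y}{2})\leq f(x)+f(y)$ for all $x,y\in X$. Applying this to $y=-x$ and using $f(0)=0$ gives $f(-z)\geq -f(z)$ for every $z\in X$. The heart of the argument is then to prove, for a fixed $y\in C$, the two-sided comparison
\[
-f^n(y)\leq f^n(-y)\leq f^n(y),\qquad n\geq 1.
\]
The right-hand inequality is immediate: since $y\in C$ we have $-y\leq y$, and order-preservation propagates this through all iterates. The left-hand inequality I would prove by induction: the base case $f(-y)\geq -f(y)$ is the subadditivity estimate above, and for the inductive step I apply the order-preserving map $f$ to $f^n(-y)\geq -f^n(y)$ and then invoke subadditivity once more in the form $f(-f^n(y))\geq -f^{n+1}(y)$, chaining the two to obtain $f^{n+1}(-y)\geq -f^{n+1}(y)$.

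With the sandwich in hand, set $w=f^n(y)\in C$, so that $0\leq f^n(-y)+w\leq 2w$. Normality of $C$ gives $\norm{f^n(-y)+w}\leq 2M\norm{w}$ for the normality constant $M$, whence $\norm{f^n(-y)}\leq (2M+1)\norm{f^n(y)}$ by the triangle inequality. Taking $n$th roots and passing to the $\limsup$ (the factor $(2M+1)^{1/n}$ tends to $1$) yields $\mu(-y)\leq \mu(y)\leq \cspr[C]{f}$, with $\mu$ as in~\eqref{sp-eq31-cspr}; taking the supremum over $y\in C$ then gives $\cspr[-C]{f}\leq\cspr[C]{f}$, which closes the argument.

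The main obstacle is the two-sided comparison $-f^n(y)\leq f^n(-y)\leq f^n(y)$: the lower bound is where convexity genuinely enters, and the delicate point is that one must interleave subadditivity with order-preservation at each step of the induction, rather than applying either property in isolation.
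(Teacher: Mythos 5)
Your proof is correct and follows essentially the same route as the paper: the paper also derives $-f(-x)\leq f(x)$ from convexity plus homogeneity (phrased as $f^-\leq f$ for $f^-(x):=-f(-x)$) and then compares iterates on $C$ using normality, except that it delegates your inductive sandwich and the normality estimate to the already-proved comparison Lemma~\ref{sp-lem-add2} applied to the order-preserving map $f^-$, whereas you rederive that comparison inline. The only cosmetic difference is that the paper's route gives $-f^n(y)\leq f^n(-y)\leq 0$ while you use the upper bound $f^n(y)$; both suffice.
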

\begin{proof}
Consider the map $f^-:X\to X$ defined by 
$f^-(x)=-f(-x)$. We have $(f^-)^n(x)=-f^n(-x)$ for all $x\in X$, 
hence $\norm[C]{(f^-)^n}=\norm[-C]{f^n}$ for all $n\in\N$.
It follows that $\cspr[C]{f^-}= \cspr[-C]{f}$ and
$\bonsall[C]{f^-}= \bonsall[-C]{f}$.
We get that 
for all $x\in X$, $0
\leq  f(x)+ f(-x)$, hence 
$-f(-x)\leq f(x)$, which shows that $f^-\leq f$.
Since $f^-$ is order-preserving (with respect to $C$), we deduce from
Lemma~\ref{sp-lem-add2} that $\cspr[C]{f^-}\leq \cspr[C]{f}$
and $\bonsall[C]{f^-}\leq \bonsall[C]{f}$.
With Lemma~\ref{sp-lem-add1}, this yields the assertion of the corollary.
\end{proof}
Note
that the condition that $(f(x)+f(-x))$ belongs to $C$ used
in the corollary holds in particular if $f$ is convex,
meaning that $\frac{1}{2}(f(x)+f(y)) \geq f(\frac{1}{2}(x+y))$
holds for all $x,y \in X$.

\section{Spectral radius and Collatz-Wielandt number}
\label{sp-sec-CW}
Let $C$ be a cone in a Banach space $(X, \norm{\cdot})$,
assume $C$ has nonempty interior $\Cint$, 
and let $h:C\to C$ be a homogeneous map. We define
the \NEW{Collatz-Wielandt number}:
\begin{align}
\cw[C]{h} & =\inf\, \set{ \lambda >0}{\exists x \in \Cint, \; h(x) \leq
 \lambda x } \enspace .\label{e-def-cw}
\end{align}
When the choice of the cone $C$ will be obvious,
and in particular, when $C=X$, the cone $C$
will be omitted in the previous notation.

The next proposition allows us to rewrite the Collatz-Wielandt
number in a perhaps more familiar way.
\begin{proposition}
If $C$ is a proper cone with non-empty interior, then, 
\begin{align}
\cw[C]{h} &= \inf_{x\in\Cint}\sup_{\psi\in C^*\setminus\{0\}}
\frac{\psi(h(x))}{\psi(x)}
 \enspace,\label{sp-e-cwf}\\
&= \inf_{x\in \Cint} 
\sup_{\psi\in \operatorname{extr}C^*}
\frac{\psi(h(x))}{\psi(x)}
 \enspace,\label{sp-e-cwf2}
\end{align}
where $\operatorname{extr}C^*$ denotes the set
of non-zero elements of extreme rays of the cone $C^*$.
\end{proposition}
\begin{proof}
Indeed, if $C$ is a proper cone, the Hahn-Banach theorem implies that $h(x)\leq \lambda x$ if and only if $\psi(h(x)) \leq \psi(\lambda x)$ for all $\psi$ in $C^*$.
Observe that if $\psi\in C^*$
is non-zero, then $\psi$ is non-zero at every point
of the interior of $C$. Then, the expression~\eqref{sp-e-cwf} is
readily derived from~\eqref{e-def-cw}.
Now, let $x$ be an arbitrary vector in the interior of $C$
and take $\epsilon>0$ so that the ball
$B(x,\epsilon)$ is included in $C$. Then,
$y\leq \epsilon^{-1}x$ holds for all $y\in B(0,1)$,
and so, $\psi(y)\leq \epsilon^{-1} \psi(x) $. 
Since the same inequality applies to $-y$, we deduce
that $|\psi(y)|\leq \epsilon^{-1}\psi(x)$, showing that
the dual norm of $\psi$ satisfies $\|\psi\|^*\leq \epsilon^{-1}\psi(x)$.
It follows that $\Sigma(x)=\{\psi \in C^*\mid \psi(x)=1\}$ 
is bounded in the space $(X^*,\|\cdot\|^*)$, and since $\Sigma(x)$
is trivially closed in the weak-star topology,
it is compact in this topology. Then, the
map $\Sigma(x) \to \R, \; \psi \mapsto \psi(h(x))$,
which is continuous in the same topology,
achieves its maximum
at one of the extreme points of $\Sigma(x)$. Since the extreme rays
of $C^*$ are precisely the rays generated by the extreme points of $\Sigma(x)$, 
we get~\eqref{sp-e-cwf2}.
\end{proof}
The expression~\eqref{sp-e-cwf2} extends
the classical Collatz-Wielandt function
which arises in Wielandt's approach to the finite
dimensional Perron-Frobenius theorem. The latter
concerns the special case in which $C$ is the standard positive
cone $\R_+^n$ in $\R^n$:
\[
\cw[\R_+^n]{h} = 
\inf_{x\in\operatorname{int}\R_+^n}\sup_{1\leq i\leq n}
\frac{(h(x))_i}{x_i} \enspace.
\]

We now examine the properties of the Collatz-Wielandt number
and relate it with the different notions of spectral radius. 
We start
with an obvious observation.
\begin{lemma} \label{sp-lem-add4}
Let $C$ be a proper cone in a Banach space $(X,\norm{\cdot})$, with nonempty
interior, and let
$g$ and $h:C\to C$ be two maps which are homogeneous.
If $g\leq h$, then
\begin{equation} \label{sp-lem-add41}
\cw[C]{g}\leq \cw[C]{h}\enspace .
\end{equation}
\end{lemma}

We also have:
\begin{lemma} \label{sp-lem-add5}
Let $C$ be a proper cone in a Banach space $(X, \norm{\cdot})$, 
with nonempty interior,
and let  $h:C\to C$ be a continuous, homogeneous
and order-preserving map.
Then, for all $n\geq 1$, we have
\begin{equation}\label{sp-lem-add52}
\cczech[C]{h}\leq ( \cw[C]{h^n})^{1/n}
\leq\cw[C]{h} \enspace . \end{equation}
If in addition $C$ is normal, we also have, for all $n\geq 1$,
\begin{equation}\label{sp-lem-add51}
\cspr[C]{h}=\bonsall[C]{h}\leq ( \cw[C]{h^n})^{1/n}
 \enspace . \end{equation}
\end{lemma}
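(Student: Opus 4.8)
The plan is to reduce the whole lemma to a single comparison principle between sub-eigenvectors in $\Cint$ and super-eigenvectors in $C\setminus\{0\}$. I would first record the elementary fact that for $u\in\Cint$ and any $v\in C\setminus\{0\}$ there is $\beta>0$ with $v\leq\beta u$: choosing $\epsilon>0$ so that the closed ball $\set{z}{\norm{z-u}\leq\epsilon}$ lies in $C$ and applying this to $u-\epsilon v/\norm{v}$ gives $\epsilon v/\norm{v}\leq u$, i.e. $v\leq(\norm{v}/\epsilon)u$. The core claim is then: if $n,k\geq1$, $h^n(u)\leq\mu u$ for some $u\in\Cint$ and $\mu>0$, and $h^{nk}(v)\geq\theta v$ for some $v\in C\setminus\{0\}$ and $\theta>0$, then $\theta\leq\mu^k$. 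To prove it I would iterate, using that $h$ is order-preserving and homogeneous, to get $h^{nkj}(v)\geq\theta^j v$ and $h^{nkj}(u)\leq\mu^{kj}u$; combined with $v\leq\beta u$ these give $\theta^j v\leq h^{nkj}(v)\leq\beta\mu^{kj}u$, i.e. $(\theta/\mu^k)^j v\leq\beta u$ for all $j$. If $\theta>\mu^k$, multiplying by $(\mu^k/\theta)^j$ and letting $j\to\infty$ forces $-v\in C$ since $C$ is closed, whence $v=0$ as $C$ is pointed, a contradiction. This contradiction step is where the hypotheses on $C$ (closed and pointed) are genuinely used.

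Given this, the inequality $(\cw[C]{h^n})^{1/n}\leq\cw[C]{h}$ is immediate and needs no interior argument: assuming $\cw[C]{h}<+\infty$, for any $\lambda>\cw[C]{h}$ I would pick $x\in\Cint$ with $h(x)\leq\lambda x$ and iterate to get $h^n(x)\leq\lambda^n x$, so that $\cw[C]{h^n}\leq\lambda^n$; letting $\lambda\downarrow\cw[C]{h}$ finishes it.

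For $\cczech[C]{h}\leq(\cw[C]{h^n})^{1/n}$ I would assume $\cw[C]{h^n}<+\infty$ and fix $\mu>\cw[C]{h^n}$ together with $u\in\Cint$ such that $h^n(u)\leq\mu u$. Given any $k\geq1$ and any eigenvalue $\lambda>0$ of $h^k$, with eigenvector $v\in C\setminus\{0\}$, one has $h^{nk}(v)=\lambda^n v$, so the core claim with $\theta=\lambda^n$ yields $\lambda^n\leq\mu^k$; taking the infimum over $\mu$ and then roots gives $\lambda^{1/k}\leq(\cw[C]{h^n})^{1/n}$. Taking the supremum over eigenvalues $\lambda$ of $h^k$ and then over $k\geq1$ in the definition $\cczech[C]{h}=\sup_{k\geq1}(\eigenvalspr[C]{h^k})^{1/k}$ produces the bound.

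Finally, under the extra hypothesis that $C$ is normal, $\cspr[C]{h}=\bonsall[C]{h}$ by Theorem~\ref{sp-prop-nus1}, so I only need $\bonsall[C]{h}\leq(\cw[C]{h^n})^{1/n}$. Fixing again $\mu>\cw[C]{h^n}$ and $u\in\Cint$ with $h^n(u)\leq\mu u$, the new point is to upgrade $v\leq\beta u$ to a \emph{uniform} domination: with $\epsilon>0$ as above, every $x\in C$ with $\norm{x}\leq1$ satisfies $u-\epsilon x\in C$, hence $x\leq Ru$ with $R=1/\epsilon$ independent of $x$. Then $0\leq h^{nj}(x)\leq R\,h^{nj}(u)\leq R\mu^j u$, and normality gives $\norm{h^{nj}(x)}\leq MR\mu^j\norm{u}$ for the normality constant $M$; by homogeneity $\norm[C]{h^{nj}}\leq MR\norm{u}\,\mu^j$. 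Since the limit in~\eqref{sp-eq31-bonsallcspr} exists it may be evaluated along $m=nj$, so
\[
\bonsall[C]{h}=\lim_{j\to\infty}\norm[C]{h^{nj}}^{1/(nj)}\leq\lim_{j\to\infty}\bigl(MR\norm{u}\bigr)^{1/(nj)}\mu^{1/n}=\mu^{1/n}\enspace .
\]
Letting $\mu\downarrow\cw[C]{h^n}$ gives the claim. I expect this last part to be the main obstacle: one must pass from the $x$-dependent comparison $v\leq\beta u$ to a bound uniform over the unit ball of $C$, which is exactly what upgrades control of the orbit-wise radius $\cspr[C]{h}$ to control of Bonsall's radius $\bonsall[C]{h}$, and it is here that normality enters.
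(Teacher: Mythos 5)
Your proof is correct, and its overall architecture (reduce everything to comparing a sub-eigenvector $u\in\Cint$ against a super- or genuine eigenvector $v\in C\setminus\{0\}$, using $v\leq\beta u$) is the same as the paper's; but two of your steps take a genuinely different local route. For the core comparison the paper argues in one shot: from $\mu^{nm}y=h^{nm}(y)\leq b\,h^{nm}(x)\leq b\lambda^{nm}x$ with $b=\M(y/x)\in(0,+\infty)$ it gets $\M(y/x)\leq(\lambda/\mu)^{nm}\M(y/x)$ and concludes $\mu\leq\lambda$ immediately, whereas you iterate $j\to\infty$ and use closedness and pointedness of $C$ to force $-v\in C$; both are fine, the paper's version just packages the same geometry into the finiteness and positivity of $\M(y/x)$. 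The more substantive divergence is in~\eqref{sp-lem-add51}: the paper fixes $y\in C$, uses the $y$-dependent constant $b=\M(y/x)$ and normality to get $\limsup_k\norm{h^k(y)}^{1/k}\leq\lambda$ for each $y$ separately, i.e.\ it bounds $\cspr[C]{h}$ and then transfers to $\bonsall[C]{h}$ via Theorem~\ref{sp-prop-nus1}; you instead upgrade to the uniform domination $x\leq u/\epsilon$ over the unit ball of $C$ and bound $\norm[C]{h^{nj}}$ directly, evaluating the Bonsall limit along the subsequence $m=nj$. Your variant buys a direct estimate on Bonsall's radius that does not route the inequality through Theorem~\ref{sp-prop-nus1} (which you then only need for the stated equality $\cspr[C]{h}=\bonsall[C]{h}$), at the cost of invoking the nonempty interior of $C$ once more; the paper's variant is slightly shorter because it reuses the machinery it has already set up. Also note the paper reduces the case of general $n$ to $n=1$ via $\bonsall[C]{h^n}=(\bonsall[C]{h})^n$, while you handle general $n$ directly along the subsequence $nj$ — again equivalent.
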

\begin{proof}
Let $\lambda>0$ and $x\in \Cint$ be such that $h(x)\leq \lambda x$.
Since $h$ is homogeneous 
and order-preserving, we deduce that $h^n(x)\leq \lambda^n x$ for all
$n\geq 1$.
Hence, $\lambda^n\geq \cw[C]{h^n}$, and since this holds for
all $\lambda>0$ and $x\in \Cint$ such that $h(x)\leq \lambda x$,
we get that $( \cw[C]{h^n})^{1/n} \leq\cw[C]{h}$
and the second inequality of~\eqref{sp-lem-add52} is proved.

We now assume that $\cczech[C]{h}>0$, since otherwise the first inequality
of~\eqref{sp-lem-add52} is trivial.
Let $n,m\geq 1$,  $\lambda,\mu>0$, $x\in  \Cint$ and 
$y\in C\setminus \{0\}$ be such that $h^n(x)\leq\lambda^n x$ 
and $h^m(y)= \mu^m y$. Then, $h^{nm}(y)= \mu^{nm} y$ and,
since $h$ is homogeneous and order preserving,
$h^{nm}(x)\leq\lambda^{nm} x$.
Since $x\in \Cint$ and $y\in C\setminus \{0\}$, we get that
$b:=\M(y/x)\in (0,+\infty)$. Since $y\leq b x$, using again the fact
that $h$ is homogeneous
and order-preserving, it follows that
$\mu^{nm} y=h^{nm}(y)\leq b h^{nm}(x)\leq b \lambda^{nm} x$.
Hence, $\M(y/x)\leq (\mu^{-1}\lambda)^{nm} b=(\mu^{-1}\lambda)^{nm}\M(y/x)$,
and 
since $\M(y/x)\in (0,+\infty)$,
we deduce that $\mu\leq \lambda$. Since this holds for all
$\lambda>0$ and $\mu>0$ as above, we get
\[(\eigenvalspr[C]{h^m})^{1/m}\leq (\cw[C]{h^n})^{1/n}
\enspace .\]
Since this inequality holds for all $n,m\geq 1$, we obtain the
first inequality of~\eqref{sp-lem-add52}.

Assume now that $C$ is normal.
By Theorem~\ref{sp-prop-nus1}, $\bonsall[C]{h}=\cspr[C]{h}$.
Since for all $m\geq 1$,
$h^m$ is continuous, homogeneous and order-preserving, and
$(\bonsall[C]{h})^m=\bonsall[C]{h^m}$ it is sufficient to prove the
inequality of~\eqref{sp-lem-add51} for $m=n=1$.
Let $\lambda>0$, let $x\in \Cint$ be such that $h(x)\leq \lambda x$
and let $y\in C$. Then, $b:=\M(y/x)\in (0,+\infty)$ and $y\leq b x$. 
As above, it follows that $h^k(y)\leq b h^k(x)\leq b \lambda^k x$,
for all $k\geq 1$.
Since $C$ is normal, there exists $M>0$ (independent of
$y\in C$ and $k$) such that $\norm{h^k(y)}\leq M b \lambda^k \norm{x}$, 
for all $k\geq 1$.
Hence, $\mu(y)\leq \lambda$ holds for all $y\in C$,
and so $\cspr[C]{h}\leq \lambda$. Since this holds 
for all $\lambda>0$ and $x\in \Cint$ such that $h(x)\leq \lambda x$,
this concludes the proof of~\eqref{sp-lem-add51}.
\end{proof}
The following theorem extends the characterization
of the Perron root as the value of the Collatz-Wielandt
function, which arises in finite dimensional (linear) Perron-Frobenius
theory.
\begin{theorem}\label{sp-supeigenth0}
Let $C$ be a normal cone in a Banach space $(X,\norm{\cdot})$, with
nonempty interior.
Let $h:C\to C$ be a continuous, homogeneous, and order-preserving
map. 
Consider the following conditions:
\begin{center}
\begin{tabular}{cl}
(i)& $h$ is uniformly continuous on bounded sets of $C$,\\
(ii)&$\mes[C]{h}< \cw[C]{h}$,\\
(iii)&$\essp[C]{h}<\cw[C]{h}$.
\end{tabular}
\end{center}
If (i) or~(ii) holds, then
\[\cspr[C]{h}=
\cw[C]{h} \enspace.\]
If ((i) and (iii)) or (ii) holds, then
\[
\eigenvalspr[C]{h}= \cspr[C]{h} \enspace .
\]
and there exists an element $x\in C\setminus\{0\}$
such that
\[
h(x)=\cspr[C]{h} x \enspace .
\]
\end{theorem}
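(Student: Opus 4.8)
The plan is to separate the first identity into $\cspr[C]{h}\le\cw[C]{h}$ and $\cw[C]{h}\le\cspr[C]{h}$, the first being immediate: as $C$ is normal, Theorem~\ref{sp-prop-nus1} gives $\cspr[C]{h}=\bonsall[C]{h}$, and Lemma~\ref{sp-lem-add5} then yields $\cspr[C]{h}=\bonsall[C]{h}\le\cw[C]{h}$. Everything therefore reduces to the reverse inequality, which I would prove by contradiction: assuming $\cw[C]{h}>\cspr[C]{h}$, I will exhibit, for a suitable $\lambda$ with $\cspr[C]{h}<\lambda<\cw[C]{h}$, an interior super-eigenvector, i.e.\ some $x\in\Cint$ with $h(x)\le\lambda x$. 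This contradicts $\lambda<\cw[C]{h}$ by the very definition of the Collatz--Wielandt number. (No such $x$ can be expected at $\lambda=\cspr[C]{h}$, since the infimum defining $\cw[C]{h}$ is in general not attained.)

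To manufacture $x$ I would fix $u\in\Cint$ and seek a fixed point of the resolvent-type map $T(y):=u+\lambda^{-1}h(y)$, continuous and order-preserving from $C$ to $C$; any fixed point satisfies $\lambda^{-1}h(x)=x-u$, so $h(x)\le\lambda x$ and $x\ge u$, whence $x\in\Cint$. The relevant structural fact is that $\id-\lambda^{-1}h$ has Property~\PF: under~(i) this follows from Proposition~\ref{sp-prop1} applied to $\lambda^{-1}h$, whose Bonsall radius equals $\bonsall[C]{h}/\lambda<1$; under~(ii), taking $\lambda$ also above $\mes[C]{h}$ (possible since $\cspr[C]{h}$ and $\mes[C]{h}$ are both $<\cw[C]{h}$) it holds already by the one-step Lemma~\ref{sp-lem-ksetcontract}, and moreover $\lambda^{-1}h$ is then a genuine $k$-set contraction, which permits a Darbo/degree fixed-point argument on a bounded closed convex $T$-invariant set. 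In each case the mechanism is the same: secure an a priori bound, extract a limit point from the compactness encoded in Property~\PF\ (respectively the $k$-set-contraction property), and solve $T(x)=x$. The nondecreasing iteration $y_0=0$, $y_{n+1}=T(y_n)$ is a convenient vehicle, because in a normal cone a nondecreasing sequence possessing a convergent subsequence converges.

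The step I expect to be hardest is the a priori bound, i.e.\ ruling out iterates (or fixed-point candidates) escaping to infinity. Since $h$ is genuinely nonlinear there is no subadditive norm adapted to it, so the naive estimate $\norm{y_{n+1}}\le\norm{u}+\lambda^{-1}\norm[C]{h}\,\norm{y_n}$ is useless precisely when $\norm[C]{h}>\lambda$, which is the relevant regime $\lambda$ near $\bonsall[C]{h}=\inf_{k}\norm[C]{h^k}^{1/k}$. The plan is to argue as in Lemma~\ref{sp-lemma-proper}: if $\norm{y_n}\to\infty$ along a subsequence, normalize $z_n:=y_n/\norm{y_n}$ and pass to the limit, using homogeneity, in $\tfrac{\norm{y_{n+1}}}{\norm{y_n}}\,z_{n+1}=\tfrac{u}{\norm{y_n}}+\lambda^{-1}h(z_n)$; compactness yields $z\in C\setminus\{0\}$ and $t\ge 0$ with $h(z)=\lambda t\,z$, so that $\lambda t\le\eigenvalspr[C]{h}\le\cspr[C]{h}<\lambda$ forces $t<1$ and contradicts $\norm{y_n}\to\infty$. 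Carrying this out under~(i), where the only compactness at hand is Property~\PF\ and there is no $k$-set-contraction hypothesis, is the genuinely delicate part.

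For the final assertion I would split on the hypotheses. Under~(ii), once $\cspr[C]{h}=\cw[C]{h}$ is in hand we have $\mes[C]{h}<\cspr[C]{h}$, so Theorem~\ref{sp-th-nus2} applies with $m=1$ and yields at once $x\in C\setminus\{0\}$ with $h(x)=\cspr[C]{h}\,x$, hence $\eigenvalspr[C]{h}=\cspr[C]{h}$. Under~(i) and~(iii) we only know $\essp[C]{h}<\cspr[C]{h}$, and Theorem~\ref{sp-th-nus2} gives, for some $m$, a vector $x_m\in C\setminus\{0\}$ with $h^m(x_m)=\cspr[C]{h}^{\,m}x_m$; as no averaging converts this into an eigenvector of $h$ (the usual device being purely linear), the plan is to use a super-eigenvector as a barrier. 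Writing $r=\cspr[C]{h}$ and rescaling the interior super-eigenvectors from the first part so that they dominate $x_m$, I would extract, via the compactness in~(iii), a limit $y_0\in C\setminus\{0\}$ with $h(y_0)\le r\,y_0$ and $y_0\ge x_m$; then $h^k(y_0)/r^k$ is nonincreasing and, along $k\in m\N$, bounded below by $h^{mj}(x_m)/r^{mj}=x_m\ne 0$, so it converges (again by compactness) to some $x_\infty\ge x_m$ with $h(x_\infty)=r\,x_\infty$. The subtle point is producing this boundary super-eigenvector $y_0$ at the exact value $r$ that still dominates $x_m$: since the infimum defining $\cw[C]{h}$ is not attained in $\Cint$, $y_0$ has to arise as a limit of interior super-eigenvectors, and keeping that family in a compact set while preserving $y_0\ge x_m$ is exactly where~(iii) is indispensable.
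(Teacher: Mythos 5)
Your outer skeleton matches the paper's: the inequality $\cspr[C]{h}=\bonsall[C]{h}\le\cw[C]{h}$ via Theorem~\ref{sp-prop-nus1} and Lemma~\ref{sp-lem-add5} is exactly right, the contradiction under~(i) against the uniqueness of the fixed point coming from $\bonsall[C]{\mu^{-1}h}<1$, and the use of Theorem~\ref{sp-th-nus2} with $m=1$ once $\mes[C]{h}<\cspr[C]{h}$ is established under~(ii), are all sound. But the central construction --- producing the interior super-eigenvectors --- has a genuine gap, and it is precisely the step you flag as ``genuinely delicate'' without closing it. The additive resolvent $T(y)=u+\lambda^{-1}h(y)$ does not telescope: since $h$ is not subadditive, the iterates $y_n=u+g(u+g(\cdots))$ cannot be dominated by $\sum_k g^k(u)$, so neither $\bonsall[C]{h}<\lambda$ nor $\essp[C]{h}<\lambda$ yields an a priori bound. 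Your rescue via blow-up normalization $z_n=y_n/\norm{y_n}$ requires precompactness of $\{z_n\}$, but under hypothesis~(i) the only compactness available is Property~\PF\ of $\id-\lambda^{-1}h$, which applies only to sequences $x_j$ with $(\id-\lambda^{-1}h)(x_j)\to 0$; the normalized sequence satisfies instead a relation involving the ratios $\norm{y_{n-1}}/\norm{y_n}$, which need not converge, so no subsequence can be extracted. Even under~(ii), the one-step estimate $\mesname(\{z_n\})\le\lambda^{-1}M\,\mes[C]{h}\,\mesname(\{z_n\})$ (the normality constant $M$ entering through the ratios) does not close unless $\lambda>M\mes[C]{h}$, and passing to iterates is blocked by the same failure of telescoping. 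Similarly, in the ((i) and (iii)) case, your ``boundary super-eigenvector $y_0$ at the exact value $r$ dominating $x_m$'' is asserted to arise as a limit of rescaled interior super-eigenvectors kept in a compact set, but no mechanism is given for controlling that family (it is not an orbit of $h$, so $\essp[C]{h}$ gives no handle on it), and since the infimum defining $\cw[C]{h}$ is not attained such a $y_0$ need not exist without further argument.

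The paper's key idea, which your proposal is missing, is to perturb \emph{multiplicatively relative to the cone} rather than additively: with $q=\norm[u]{\cdot}$ and $u\in\Cint$, set $h_s(x)=h(x)+s\,q(h(x))\,u$. By Lemma~\ref{sp-lem-eps1} the map $\Psi_{su}$ is a strict contraction in Hilbert's projective metric on bounded sets of each slice $\Cint\cap\Sigma$ with bounded image, so $\Psi_{su}\circ h$ has a unique fixed point $x_s\in\Cint\cap\Sigma$ by completeness (Proposition~\ref{sp-prop-birkhoff}); this gives $h_s(x_s)=\lambda_s x_s$ with $x_s$ interior. The a priori bound you struggle with is then automatic, because $q(x_s)=1$ by construction. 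Lemma~\ref{sp-lem-add51} identifies $\lambda_s=\cw[C]{h_s}\ge\cw[C]{h}$, monotone in $s$, so $\lambda_{1/k}\downarrow\mu\ge\cw[C]{h}$ and $\mu y_k-h(y_k)\to 0$ with $y_k=x_{1/k}$ bounded --- exactly the hypothesis of Property~\PF\ for $\mu\id-h$ (supplied by Lemma~\ref{sp-lem-ksetcontract} under~(ii) and by Proposition~\ref{sp-prop1} under~(i) or~(iii)). The limit $y$ satisfies $q(y)=1$ and $h(y)=\mu y$, and the chain $\mu\le\eigenvalspr[C]{h}\le\cspr[C]{h}\le\cw[C]{h}\le\mu$ finishes all assertions at once, with no need for the separate $h^m$-periodic-point argument you sketch for the ((i) and (iii)) case.
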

To prove this result, we need the following lemmas.

\begin{lemma}\label{sp-lemma-gen}
Let $C$ be a proper cone in a Banach space $(X,\norm{\cdot})$.
For all $u,x,y\in C\setminus\{0\}$ such that $\M(x/u)<+\infty$ and
$\M(y/x)<+\infty$, we have
\[ \M(y+u / x+ u)\leq 
\frac{(\M(y/x)\vee 1 ) \M(x/u)+1}{\M(x/u)+1}\enspace.\]
\end{lemma}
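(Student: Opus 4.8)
The plan is to exhibit an explicit admissible bound for $\M(y+u/x+u)$ rather than to estimate it indirectly. Set $p:=\M(y/x)$ and $q:=\M(x/u)$, both finite by hypothesis; since $C$ is closed, the infima in~\eqref{sp-def-M} are attained, so $y\leq p x$ and $x\leq q u$. Put $\bar p:=p\vee 1\geq 1$; adding $(\bar p-p)x\in C$ to $px-y\in C$ shows that $y\leq \bar p x$ as well. I then claim that the candidate value
\[ b:=\frac{\bar p\, q+1}{q+1} \]
satisfies $y+u\leq b(x+u)$, which by definition of $\M(\cdot/\cdot)$ gives exactly $\M(y+u/x+u)\leq b$, the asserted inequality.

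To prove the claim I would verify the algebraic identity
\[ b(x+u)-(y+u)=(\bar p x-y)+\frac{\bar p-1}{q+1}\,(q u-x) \]
by comparing the coefficients of $x$, $y$ and $u$ on both sides: the left-hand side equals $bx-y+(b-1)u$, while on the right the coefficient of $x$ is $\bar p-\frac{\bar p-1}{q+1}=\frac{\bar p q+1}{q+1}=b$, the coefficient of $y$ is $-1$, and the coefficient of $u$ is $\frac{(\bar p-1)q}{q+1}=b-1$. Both terms on the right lie in $C$: we have $\bar p x-y\in C$ by the reduction above, $qu-x\in C$ because $x\leq qu$, and the scalar $\frac{\bar p-1}{q+1}$ is nonnegative since $\bar p\geq 1$. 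As $C$ is a convex cone, the right-hand side, hence $b(x+u)-(y+u)$, belongs to $C$, which is precisely $y+u\leq b(x+u)$.

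The one point that deserves care — and the only place where the precise form of the bound is used — is the choice of the coefficient $\tfrac{\bar p-1}{q+1}$. If one writes $b(x+u)-(y+u)=(\bar p x-y)+t(q u-x)+w$ and asks for $w\in C$ with nonnegative coordinates along $x$ and $u$, the $x$-coordinate forces $t\geq \bar p-b$ and the $u$-coordinate forces $t\leq (b-1)/q$; a short computation shows both bounds equal $\tfrac{\bar p-1}{q+1}$, so $t$ is uniquely determined and the remainder $w$ vanishes, leaving no slack. Everything else is routine manipulation in the ordered vector space, relying only on the closedness of $C$ (for attainment of the defining infima) and on the stability of $C$ under nonnegative linear combinations.
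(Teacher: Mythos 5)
Your proof is correct and is essentially the paper's argument in a slightly different packaging: both reduce to exhibiting $b_1=\bigl((\M(y/x)\vee 1)\,\M(x/u)+1\bigr)/\bigl(\M(x/u)+1\bigr)$ with $y+u\leq b_1(x+u)$, using $y\leq(\M(y/x)\vee 1)x$ and $x\leq \M(x/u)\,u$; you verify this by an explicit conic decomposition of $b_1(x+u)-(y+u)$, while the paper derives the equivalent sufficient condition $(b-b_1)\M(x/u)\leq b_1-1$ and solves for $b_1$. No gaps.
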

\begin{proof}
Let $u,x,y$ be as in the lemma, and denote $b=\M(y/x)\vee 1$. Then, $b \geq 1$
and $y\leq b x$.
We need to find $b_1\geq 0$ such that
$y+u \leq b_1(x+ u)$. Since $y\leq b x$, the latter
inequality is satisfied if $b x+u \leq b_1(x+ u)$,
or equivalently if
\[ (b-b_1) x\leq (b_1-1) u\enspace.\]
This inequality holds when $1\leq b_1\leq b$ and
\[ (b-b_1) \M(x/u) \leq (b_1-1)\enspace. \]
Since $b\geq 1$, we can take
$b_1=(b \M(x/u)+1)/(\M(x/u)+1)$,
which yields the inequality of the lemma.
\end{proof}
Results closely related to the following lemma can be found in
Lemma~2.1, p.~45, and Theorem~2.6, p.~59,
of~\cite{nussbaum88}. 
See, also, Lemma~3.9, p.~216, in~\cite{Nus07}.
\begin{lemma}\label{sp-lem-eps1}  
Let $C$ be a proper cone in a Banach space $(X,\norm{\cdot})$, with
nonempty interior $\Cint$ and let $\hil$ denote Hilbert's projective metric.
Let $q:C\setminus\{0\} \to (0,+\infty)$ be a homogeneous map
preserving the order of $C$, 
and let $\Sigma:=\set{x\in C\setminus\{0\}}{q(x)=1}$. 
Given $u\in \Cint$, define the maps $\Phi_u: C\setminus\{0\} \to\Cint$ and
$\Psi_u:C\setminus\{0\}\to\Cint\cap\Sigma$ by
\[
\Phi_u(x) = x + q(x) u\quad \mrm{ and }\quad \Psi_u(x)=
 \frac{\Phi_u(x)}{q(\Phi_u(x))} \enspace .
\]
Then, for all $v\in C\setminus\{0\}$ and  $R>0$,
there exists a constant $c=c_{u,v,R}$ (depending on $u$, $v$
 and $R$) such that $0\leq c<1$ and
\begin{equation}\label{sp-lem-eps2}
\hil(\Psi_u(x), \Psi_u(y)) =\hil(\Phi_u(x), \Phi_u(y))
\leq c \; \hil(x,y) \qquad \forall x,y
\in B_R(v)\enspace,
\end{equation}
where $B_R(v):=\set{x\in C_v}{\hil(x,v)\leq R}$.
In particular, $\Psi_u|_{C_v\cap\Sigma}$ is nonexpansive and it is 
a contraction mapping 
on all bounded sets of the metric space $(C_v\cap\Sigma,d)$.

Moreover, if $C$ is normal and there exists $\gamma >0$ such that 
\begin{equation}\label{sp-qcoerc}
\norm{x}\leq \gamma q(x)\quad\forall x\in C\setminus\{0\}\enspace,\end{equation}
the image of $C\setminus\{0\}$ by $\Psi_u$ is a bounded subset of 
$(\Cint\cap\Sigma,d)$.
\end{lemma}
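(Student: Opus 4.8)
The plan is to reduce the whole statement to the one-variable estimate of Lemma~\ref{sp-lemma-gen} after two reductions. First I would dispose of the equality in~\eqref{sp-lem-eps2}: since $\Psi_u(x)=\Phi_u(x)/q(\Phi_u(x))$ is a positive multiple of $\Phi_u(x)=x+q(x)u$, and $\hil$ is unchanged when either argument is scaled by a positive constant, one has $\hil(\Psi_u(x),\Psi_u(y))=\hil(\Phi_u(x),\Phi_u(y))$; moreover $\Phi_u(x)\in\Cint$ because $q(x)>0$ and $u\in\Cint$, so every distance in sight is well defined. Next I would normalize the arguments: putting $\hat x=x/q(x)$, homogeneity of $q$ gives $q(\hat x)=1$ and $\Phi_u(x)=q(x)(\hat x+u)$, whence by scale invariance $\hil(\Phi_u(x),\Phi_u(y))=\hil(\hat x+u,\hat y+u)$ and $\hil(x,y)=\hil(\hat x,\hat y)$, while $\hil(\hat x,v)=\hil(x,v)$ keeps $\hat x$ in $B_R(v)$. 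Hence it suffices to prove $\hil(x+u,y+u)\le c\,\hil(x,y)$ for $x,y\in B_R(v)\cap\Sigma$, and I may assume the centre satisfies $q(v)=1$.

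For such $x,y$ I would invoke Lemma~\ref{sp-lemma-gen}, which applies since $\M(x/u)<+\infty$ (as $u\in\Cint$) and $\M(y/x)<+\infty$ (as $x,y$ lie in the common part $C_v$); it yields $\M(y+u/x+u)\le(b\alpha+1)/(\alpha+1)$, where $b:=\M(y/x)\ge1$ and $\alpha:=\M(x/u)\in(0,+\infty)$, the bound $b\ge1$ coming from the argument for~\eqref{sp-mleqd}. The decisive point is to bound $b$ and $\alpha$ uniformly on the ball. From~\eqref{sp-mleqd} and the triangle inequality, $b\le e^{\hil(x,y)}\le e^{2R}=:B$; and since $\hil(x,v)\le R$ gives $x\le e^{R}v$ by~\eqref{sp-mleqd}, monotonicity of $\M(\cdot/u)$ yields $\alpha\le e^{R}\M(v/u)=:A<+\infty$. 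The same bounds hold for the reversed quantities $b':=\M(x/y)$ and $\alpha':=\M(y/u)$.

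The heart of the proof is then a purely scalar estimate. I would show that $\phi(t,\beta):=(\log\tfrac{t\beta+1}{\beta+1})/\log t$ extends continuously to the compact set $[1,B]\times[0,A]$ (with value $\beta/(\beta+1)$ at $t=1$) and satisfies $\phi<1$ there, because $\tfrac{t\beta+1}{\beta+1}<t$ is equivalent to $t>1$; consequently $c:=c_{u,v,R}:=\max_{[1,B]\times[0,A]}\phi$ lies in $[0,1)$. Applying this to $(b,\alpha)$ and to $(b',\alpha')$ gives $\log\M(y+u/x+u)\le c\log b$ and $\log\M(x+u/y+u)\le c\log b'$ (both right members being nonnegative), and adding them (using $\log b+\log b'=\log\M(y/x)+\log\M(x/y)=\hil(x,y)$, which holds since $\m(y/x)=1/\M(x/y)$) yields $\hil(x+u,y+u)\le c\,\hil(x,y)$, i.e.\ \eqref{sp-lem-eps2}. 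Global nonexpansiveness of $\Psi_u$ comes from the same computation without any radius, since $(b\alpha+1)/(\alpha+1)\le b$ for every $b\ge1$; and contraction on bounded sets follows because any $\hil$-bounded subset of $C_v\cap\Sigma$ lies in a ball $B_R(w)$ for a suitable $w,R$. I expect this uniform scalar estimate to be the only real obstacle: $\phi(t,\beta)\to1$ as $t\to\infty$, which is precisely why the contraction factor must be allowed to depend on $R$ and the restriction to a bounded ball cannot be removed.

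For the last assertion I would exhibit a single Hilbert ball containing the whole image. Normalizing again and using scale invariance, $\hil(\Psi_u(x),\hat u)=\hil(\hat x+u,u)$ with $\hat u=u/q(u)$, and expanding, $\hil(\hat x+u,u)=\log\M(u/\hat x+u)+\log\M(\hat x+u/u)$. The first term is $\le0$ since $u\le\hat x+u$, and the second satisfies $\M(\hat x+u/u)\le\M(\hat x/u)+1$ because $\hat x\le\M(\hat x/u)\,u$. Finally, choosing $\epsilon>0$ with the closed $\epsilon$-ball about $u$ contained in $C$, one gets $\M(\hat x/u)\le\|\hat x\|/\epsilon\le\gamma/\epsilon$ from~\eqref{sp-qcoerc} and $q(\hat x)=1$; hence $\hil(\Psi_u(x),\hat u)\le\log(\gamma/\epsilon+1)$ uniformly in $x$, so $\Psi_u(C\setminus\{0\})$ is bounded in $(\Cint\cap\Sigma,\hil)$.
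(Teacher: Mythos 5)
Your proof is correct and follows essentially the same route as the paper: reduce to $\Sigma$ by homogeneity, apply Lemma~\ref{sp-lemma-gen}, bound $\M(y/x)\leq e^{2R}$ and $\M(x/u)\leq e^R\M(v/u)$ via~\eqref{sp-mleqd}, and then conclude by a uniform scalar estimate. The only (harmless) variations are that you obtain the contraction constant $c$ by a compactness/continuity argument on $\phi(t,\beta)$ where the paper exhibits an explicit $c=\log(\mu+(1-\mu)e^{2R})/(2R)$ via convexity of $\varphi$, and that in the final boundedness step you bound $\M(\hat x/u)$ directly with the $\epsilon$-ball around $u$ rather than citing the norm equivalence of Proposition~\ref{sp-prop-normequiv}.
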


\begin{proof}
Since $q(x)>0$ for all $x\in C\setminus\{0\}$,
it is clear that $\Phi_u$ sends $C\setminus\{0\}$ into
$\Cint$ and $\Psi_u$ sends $C\setminus\{0\}$ into $\Cint\cap\Sigma$.
Moreover, if~\eqref{sp-lem-eps2} is shown for all $R>0$, then
$\hil(\Psi_u(x), \Psi_u(y)) \leq \hil(x,y)$ for all $x,y\in C_v$,
which implies that $\Psi_u|_{C_v\cap\Sigma}$ is nonexpansive in $\hil$.

Let $v\in C\setminus\{0\}$ and  $R>0$, and let us show~\eqref{sp-lem-eps2}.
Since the Hilbert's projective metric only depends on the
lines generated by two vectors, 
we have $\hil(\Psi_u(x), \Psi_u(y))=\hil(\Phi_u(x), \Phi_u(y))$
for all $x,y\in C\setminus\{0\}$, which shows the equality 
in~\eqref{sp-lem-eps2}.
Moreover, by homogeneity of $q$,  
we get that for all $x,w\in C\setminus\{0\}$
$\Psi_u(x)=\Psi_u(x')$ and $\hil(x,w)=\hil(x',w)$
where $x'=\frac{x}{q(x)}\in\Sigma$,
hence it is sufficient to show the 
inequality in~\eqref{sp-lem-eps2} when $x,y\in  B_R(v)\cap \Sigma$. 
We can also assume without loss of generality that $v\in \Sigma$.
In addition, since $\hil(x,y)=\log \M(y/x)+\log\M(x/y)$ for all 
$x,y,w\in C\setminus\{0\}$ such that $x,y\in C_w$, it is
sufficient to show that 
\begin{equation}
\M(\Phi_u(y)/ \Phi_u(x)) \leq \left[\M(y/x)\right]^c 
\qquad \forall x,y \in B_R(v) \cap \Sigma\enspace.
\end{equation}

Let $x,y\in B_R(v)\cap \Sigma$. 
Since $u\in\Cint$ and $v\in C\setminus\{0\}$,
we deduce that $M_0:=\M(v/u)\in (0,+\infty)$.
{From}~\eqref{sp-mleqd}, we deduce
that $\M(x/v)\leq e^{\hil(x,v)}\leq e^R$ (since $q(x)=q(v)=1$
and $x\in C_v$), hence
$\M(x/u)\leq \M(x/v) \M(v/u)\leq e^R M_0$.
Using this inequality together with~\eqref{sp-mleqd} and 
Lemma~\ref{sp-lemma-gen}, we get that 
\[ \M(\Phi_u(y)/\Phi_u(x))\leq \mu+ (1-\mu) \M(y/x)\quad
\mrm{where}\quad \mu=\frac{1}{e^R M_0+1}\enspace.\]
Hence, it is sufficient to show that there exists $0\leq c<1$,
independent of $x,y\in B_R(v)\cap \Sigma$, such that
\[ \mu+ (1-\mu) \M(y/x)\leq \left[\M(y/x)\right]^c \enspace.\]
Since $\hil(x,y)\leq \hil(x,v)+\hil(v,y)\leq 2R$, it follows from~\eqref{sp-mleqd} that
$ 1\leq \M(y/x)\leq e^{2R}$.
Hence, it is sufficient to show that 
\begin{equation}\label{sp-elem-conv}
 \mu+ (1-\mu) \beta \leq \beta^c \quad \forall\beta\in [1,e^{2R}]
\enspace.\end{equation}
Let $0\leq c<1$ and define
$\varphi(\beta)=\mu+(1-\mu)\beta-\beta^c$ for $\beta>0$. We get that
$\varphi(1)=0$ and since $\varphi$ is convex, the 
inequality~\eqref{sp-elem-conv} holds if, and only if,
$\varphi(e^{2R})\leq 0$, which is satisfied when
\[ c=\frac{\log(\mu+(1-\mu) e^{2R})}{2R}\enspace .\]
Since $0<\mu<1$ and $R>0$, we get that $0<c<1$ and~\eqref{sp-lem-eps2} is proved.

Assume now that~\eqref{sp-qcoerc} holds.
Let  $x\in C\setminus\{0\}$.
We get that $q(x) u\leq \Phi_u(x)$, hence $\m(\Phi_u(x)/u)\geq q(x)$.
Since $C$ is normal and $u\in\Cint$, the norms $\norm{\cdot}$ and 
$\norm[u]{\cdot}$ are equivalent in $X$ (see Proposition~\ref{sp-prop-normequiv}).
In particular, there exists a constant $\beta>0$ such that
$\norm[u]{x}\leq \beta \norm{x}$ for all $x\in X$.
{From}~\eqref{sp-qcoerc}, we deduce that
$\norm[u]{x}\leq \beta \norm{x}\leq \beta \gamma q(x)$.
It follows that 
$\Phi_u(x)\leq (\beta \gamma +1) q(x) u$, or equivalently
$\M(\Phi_u(x)/u)\leq (\beta \gamma +1) q(x) $. 
With $\m(\Phi_u(x)/u)\geq q(x)$, this yields 
$\hil(\Psi_u(x),u)=\hil(\Phi_u(x),u)\leq \log (\beta \gamma +1)$,
which shows that the image of $\Psi_u$ is bounded
in $(\Cint\cap\Sigma,d)$.
\end{proof}

\begin{remark}
The conclusion of Lemma~\ref{sp-lem-eps1} remains
valid, with the same constant $c$, if 
the map $\Phi_u$ (or $\Psi_u$) is replaced by the
map $C\setminus\{0\}\to \Cint,\; x\mapsto x+u$,
$\Sigma$ is replaced by $C\setminus\{0\}$,
and Hilbert's projective
metric $\hil$ is replaced by Thompson's metric $\tho$.
\end{remark}

\begin{proof}[Proof of Theorem~\ref{sp-supeigenth0}]
We shall assume that $h$ is a non identically zero map on $C$,
since otherwise all the assertions of the theorem are trivially true.
By Theorem~\ref{sp-prop-nus1}, $\cspr[C]{h}=\bonsall[C]{h}$,
so we shall prove that
$\bonsall[C]{h}=\cw[C]{h}$.
To do so, we shall construct an approximation
of $h$ which has an eigenvector in the interior of $C$.

Let $u\in\Cint$. Since $C$ is a normal cone, 
$\norm[u]{\cdot}$ is a norm equivalent to $\norm{\cdot}$ on $X$
(see Proposition~\ref{sp-prop-normequiv}), 
and since $\norm[u]{\cdot}$ is order-preserving on $C$, 
the restriction $q$ of $\norm[u]{\cdot}$ to $C\setminus\{0\}$
satisfies all the conditions of Lemma~\ref{sp-lem-eps1}.
Since $h$ is homogeneous and order-preserving,
$h$ sends $\Cint=C_u$ into $C_{h(u)}$
and $h$ is nonexpansive in Hilbert's projective metric $\hil$
(see Lemma~\ref{sp-lemma-nonexpan}).
Moreover, since $h$ is nonzero and continuous, 
$h(\Cint)$ cannot be $\{0\}$, hence $h(u)\neq 0$.
Let $s>0$, let $\Sigma$, $\Phi_{su}$ and $\Psi_{su}$ be defined as 
in Lemma~\ref{sp-lem-eps1}, 
denote $h_s=\Phi_{su}\circ h$,
$g_s=\Psi_{su}\circ h|_{\Cint\cap \Sigma}$ and $v=h(u)$. 
By the properties of $h$ and
Lemma~\ref{sp-lem-eps1}, $g_s$ is a self-map of $\Cint\cap \Sigma$,
it is nonexpansive in $\hil$ and its image is bounded for the metric $\hil$.
Moreover, since $h$ is nonexpansive in $\hil$, the image by $h$ of
any bounded set of $(\Cint\cap \Sigma,d)$ is included in some
$B_R(v)$ with $R>0$, hence
$g_s$ is a contraction mapping on any bounded set of $(\Cint\cap \Sigma,d)$,
and in particular on the closure (for $\hil$) of the image of $g_s$.
Since, by Proposition~\ref{sp-prop-birkhoff}, $(\Cint\cap \Sigma,d)$ 
is a complete metric space, it follows that $g_s$ admits a 
unique fixed point $x_s\in\Cint\cap \Sigma$.
This implies that $x_s\in\Cint$, $q(x_s)=1$ and 
\begin{equation}\label{sp-supeigenth3}
 h_s(x_s)=h(x_s)+sq(h(x_s))u=\lambda_s x_s\end{equation}
for some $\lambda_s>0$.

For all $s>0$, $h_s:C\to C,\; x\mapsto h(x)+s q(h(x)) u$ 
($q$ and $h_s$ are extended by $0$ at $0$) is continuous, 
order-preserving and homogeneous. Hence,
by~\eqref{sp-supeigenth3} and~\eqref{sp-lem-add51}, 
we get that 
$\cw[C]{h_s}\leq \lambda_s\leq \eigenvalspr[C]{h_s}
\leq \cspr[C]{h_s}\leq \cw[C]{h_s}$, hence 
 $\cw[C]{h_s}= \lambda_s$.
Since $h\leq h_s\leq h_t$ for all $0<s\leq t$, 
Lemma~\ref{sp-lem-add4} implies that 
$\cw[C]{h}\leq \lambda_s\leq \lambda_t$.
Denote $y_k=x_{1/k}$ and $\mu_k=\lambda_{1/k}$.
The sequence $(\mu_k)_{k\geq 1}$ is nonincreasing, thus
it converges towards some real $\mu\geq \cw[C]{h}$.
Since $q(y_k)=1$ for all $k$, the sequence $y_k$ is bounded in 
$(X,\norm{\cdot})$. Then, $\mu y_k-h(y_k)= (\mu-\mu_k) y_k +\frac 1 k q(h(y_k))u$ tends
to $0$ when $k$ goes to $\infty$.

Suppose first that $\mes[C]{h}< \cw[C]{h}$.
Then, 
\[\mes[C]{\frac{1}{\mu} h}<
\cw[C]{h}/\mu\leq 1
\enspace,
\]
and by Lemma~\ref{sp-lem-ksetcontract}, 
$\mu \id -h$ has Property~\PF. 
Hence, $y_k$ has a convergent subsequence,
and since $h$ and $q$ are continuous and $C$ is closed, the limit $y$ of 
this subsequence satisfies $y\in C$, $q(y)=1$ and $h(y)=\mu y$.
Hence, $\mu\leq \eigenvalspr[C]{h}\leq \cspr[C]{h}=\bonsall[C]{h}\leq
\cw[C]{h}\leq \mu$, which implies that
$\bonsall[C]{h}=\cw[C]{h}$ and also proves that
$\eigenvalspr[C]{h}=\cspr[C]{h}$ and that there exists 
$y\in C\setminus\{0\}$ such that $h(y)= \cspr[C]{h} y$.

Suppose now that $h$ is uniformly continuous on bounded sets
of $C$, and assume by contradiction that $\bonsall[C]{h}<\cw[C]{h}$.
Then, $\bonsall[C]{\frac{1}{\mu} h}<\cw[C]{h}/\mu\leq 1$
and by Proposition~\ref{sp-prop1}, $\mu \id -h$ has Property~\PF,
and $0$ is the unique fixed point of $\frac{1}{\mu} h$.
Using Property~\PF, we conclude as above that there exists
an element $y\in C$ such that $q(y)=1$ and $h(y)=\mu y$.
which contradicts the fact that
$0$ is the unique fixed point of $\frac{1}{\mu} h$.
This shows that $\bonsall[C]{h}=\cw[C]{h}$.

Suppose in addition that $\essp[C]{h}<\cw[C]{h}$.  
Then,
$\essp[C]{\frac{1}{\mu} h}< \cw[C]{h}/\mu\leq 1$ and,
by Proposition~\ref{sp-prop1}, $\mu \id -h$ has Property~\PF.
As above, we conclude that there is a vector $y\in C$ such that $q(y)=1$  and $h(y)=\mu y$. 
Therefore, $\mu\leq \eigenvalspr[C]{h}\leq \cspr[C]{h}=\bonsall[C]{h}\leq
\cw[C]{h}\leq \mu$, showing that $h(y)=\cspr[C]{h}y$.

\end{proof}
\begin{corollary}\label{sp-cor-supeigenth}
Let $C$ be a normal cone in a Banach space $(X,\norm{\cdot})$, with
nonempty interior.
Let $h:C\to C$ be a continuous, homogeneous, and order-preserving
map. If, for some $m\geq 1$, we have
\begin{align}
\mes[C]{h^m}< \cw[C]{h^m}
\end{align}
then, 
\begin{align}
\eigenvalspr[C]{h^m}=(\cspr[C]{h})^m=(\bonsall[C]{h})^m= 
\cw[C]{h^m}
\end{align}
and there exists an element $x_m\in C\setminus\{0\}$
such that 
\begin{align}
h^m(x_m)= (\cspr[C]{h})^m x_m\enspace .
\label{sp-supeigenth1new}
\end{align}
\end{corollary}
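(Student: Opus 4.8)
The plan is to apply Theorem~\ref{sp-supeigenth0} not to $h$ itself but to its iterate $g:=h^m$, observing that the single hypothesis $\mes[C]{h^m}<\cw[C]{h^m}$ is precisely condition~(ii) of that theorem for the map $g$. The whole corollary is then essentially a translation of the conclusions of Theorem~\ref{sp-supeigenth0} back into quantities attached to $h$.

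First I would check that $g=h^m$ inherits from $h$ the properties required by Theorem~\ref{sp-supeigenth0}: being a composition of continuous, homogeneous, order-preserving maps, $g$ is again continuous, homogeneous and order-preserving, while $C$ remains a normal cone of nonempty interior by assumption. Since $\mes[C]{g}=\mes[C]{h^m}<\cw[C]{h^m}=\cw[C]{g}$ is exactly condition~(ii), and (ii) alone triggers every conclusion of the theorem, I obtain at once $\cspr[C]{h^m}=\cw[C]{h^m}$, the equality $\eigenvalspr[C]{h^m}=\cspr[C]{h^m}$, and a vector $x_m\in C\setminus\{0\}$ with $h^m(x_m)=\cspr[C]{h^m}\,x_m$.

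It then remains only to re-express $\cspr[C]{h^m}$ through $h$. Here I would use Theorem~\ref{sp-prop-nus1}, which (because $C$ is normal and both $h$ and $h^m$ are order-preserving) gives $\cspr[C]{h}=\bonsall[C]{h}$ and $\cspr[C]{h^m}=\bonsall[C]{h^m}$, together with the multiplicativity of Bonsall's cone spectral radius read off directly from the limit definition~\eqref{sp-eq31-bonsallcspr}:
\[
\bonsall[C]{h^m}=\lim_{k\to\infty}\norm[C]{h^{mk}}^{1/k}
=\Bigl(\lim_{k\to\infty}\norm[C]{h^{mk}}^{1/(mk)}\Bigr)^m=(\bonsall[C]{h})^m.
\]
Combining these identities yields $\cspr[C]{h^m}=(\bonsall[C]{h})^m=(\cspr[C]{h})^m$, and substituting into the conclusions of the previous step produces the full chain $\eigenvalspr[C]{h^m}=(\cspr[C]{h})^m=(\bonsall[C]{h})^m=\cw[C]{h^m}$ as well as $h^m(x_m)=(\cspr[C]{h})^m\,x_m$. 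The argument has no genuinely hard step; the only point needing care is the multiplicativity identity above, which is immediate from the existence of the limit in~\eqref{sp-eq31-bonsallcspr}.
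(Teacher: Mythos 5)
Your proposal is correct and follows essentially the same route as the paper: apply Theorem~\ref{sp-supeigenth0} to $g=h^m$ under hypothesis~(ii), then convert $\cspr[C]{h^m}$ into $(\cspr[C]{h})^m$ via Theorem~\ref{sp-prop-nus1} and the multiplicativity $\bonsall[C]{h^m}=(\bonsall[C]{h})^m$ read off from~\eqref{sp-eq31-bonsallcspr}. No gaps.
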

\begin{proof}
Theorem~\ref{sp-supeigenth0} shows that 
$\eigenvalspr[C]{h^m}=\cspr[C]{h^m}=\bonsall[C]{h^m}= 
\cw[C]{h^m}$, and that there exists an element
$x_m\in C\setminus\{0\}$ satisfying $h^m(x_m)= \cspr[C]{h^m} x_m$.
A straightforward argument in~\cite{Nuss-Mallet} proves
that $\bonsall[C]{h^m}=(\bonsall[C]{h})^m$. Since, by Theorem~\ref{sp-prop-nus1},
$\bonsall[C]{h}=\cspr[C]{h}$, the corollary
is proved.
\end{proof}

\begin{remark}
The Collatz-Wielandt theorem of~\cite{nussbaum86} was recently generalized in~\cite{GV10}
in a different way, to the case of order preserving and positively homogeneous
self-maps of
the interior of a finite dimensional cone with a family of geodesics
in Thompson's metric satisfying Busemann's nonpositive curvature condition.
Some of the conditions of~\cite{GV10} (finite dimension, nonpositive curvature)
are considerably more demanding than the ones of Theorem~\ref{sp-supeigenth0}, 
however, the Collatz-Wielandt type result of~\cite{GV10} remains valid
even if the map cannot be extended continuously to the boundary
of the cone, a property which we require here. 
\end{remark}
\section{Spectral radius of a supremum or of an infimum of functions}
\label{sp-spec-inf}
We now apply the previous characterizations of the spectral radius to show
that the spectral radius acts as a ``morphism'' with respect to
suprema and infima of families of positively homogeneous order preserving
maps satisfying certain selection properties. This is motivated
in particular by zero-sum games, in which Shapley operators
are typically given as infima or suprema of more elementary operators,
see~\cite{mariannemtns,marianne102013} for some applications of the present 
results.

Let $C$ be a proper cone in a Banach space $(X,\norm{\cdot})$, and 
let $(f_a)_{a\in \A}$ be a family of maps $X\to X$.
We say that  $(f_a)_{a\in \A}$ admits an \NEW{upper selection}
if  for all $x\in X$, there exists $a_x\in \A$ such that
$f_{a_x}(x)\geq f_b(x) $ for all $b \in \A$.
We denote by $\sup_{a \in \A} f_a$ the map $f: X\to X$
which associate to $x\in X$ the element $f_{a_x}(x)\in X$,
and call $f$ the \NEW{supremum} of the family $(f_a)_{a\in \A}$.
We define symmetrically the notion of  \NEW{lower selection} of
$(f_a)_{a\in \A}$ and the associated \NEW{infimum} which we denote by
$\inf_{a \in \A} f_a$.

\begin{proposition}\label{sp-lem-add3}
Let $C$ be a proper cone in a Banach space $(X,\norm{\cdot})$, 
and let $(f_a)_{a\in \A}$ be a family of maps $X\to X$ that are
homogeneous.
Assume that  $(f_a)_{a\in \A}$ admits a upper selection
and denote by $f$ its supremum.
Then, $f$ is homogeneous and we have:
\begin{align}
\eigenvalspr[C]{f}&\leq \sup_{a\in \A} 
\eigenvalspr[C]{f_a}\enspace .\label{sp-lem-add32}
\end{align}
Assume further that the maps $f_a$ are 
continuous and order-preserving with respect to $C$, that 
$f$ is continuous and that $C$ is a normal cone. Then,
$f$ is order-preserving, and we have:
\begin{align}
\sup_{a\in \A} \cspr[C]{f_a}\leq \cspr[C]{f}
\leq \max(\mes[C]{f}, \sup_{a\in \A}  \cspr[C]{f_a})\enspace .
\label{sp-lem-add31}
\end{align}
In particular, when $\mes[C]{f}< \cspr[C]{f}$, we have
\begin{align}\label{sp-lem-add33}\eigenvalspr[C]{f}= 
\sup_{a\in \A} \eigenvalspr[C]{f_a}
=\sup_{a\in \A} \cspr[C]{f_a}=\cspr[C]{f}
\enspace ,
\end{align} 
and the suprema are attained in~\eqref{sp-lem-add33}.
\end{proposition}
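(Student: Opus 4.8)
The plan is to treat $f$ as the pointwise supremum $f(x)=\sup_{a\in\A}f_a(x)$ furnished by the upper selection, and to exploit the single decisive feature of a selection: at each point $x$ the value $f(x)$ literally equals $f_{a_x}(x)$ for the selecting index $a_x$. Every assertion will be extracted from this identity together with the monotonicity results (Lemma~\ref{sp-lem-add2}, Proposition~\ref{sp-prop-easy}) and the existence theorem (Theorem~\ref{sp-th-nus2}) already available.

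First I would dispatch the structural claims, which use only homogeneity and (for the order part) the extra hypotheses. For homogeneity, fix $t>0$: by the selection at $tx$ and homogeneity of each $f_a$, $f(tx)=f_{a_{tx}}(tx)=tf_{a_{tx}}(x)\le tf(x)$, while $f(tx)=f_{a_{tx}}(tx)\ge f_{a_x}(tx)=tf_{a_x}(x)=tf(x)$, so $f(tx)=tf(x)$. For order-preservation, if $x\le y$ then $f(x)=f_{a_x}(x)\le f_{a_x}(y)\le f(y)$, using that each $f_a$ is order-preserving and that $f(y)$ dominates $f_{a_x}(y)$. Inequality \eqref{sp-lem-add32} is equally direct: if $f(x)=\lambda x$ with $x\in C\setminus\{0\}$, then $f_{a_x}(x)=f(x)=\lambda x$, so $x$ is an eigenvector of $f_{a_x}$, whence $\lambda\le\eigenvalspr[C]{f_{a_x}}\le\sup_a\eigenvalspr[C]{f_a}$; taking the supremum over such $\lambda$ gives the bound.

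Next I would establish the two-sided estimate \eqref{sp-lem-add31}. The left inequality is immediate from Lemma~\ref{sp-lem-add2} applied to $g=f_a\le f=h$ (each $f_a$ is order-preserving, continuous, homogeneous and $C$ is normal), giving $\cspr[C]{f_a}\le\cspr[C]{f}$ for every $a$. For the right inequality I would split cases. If $\cspr[C]{f}\le\mes[C]{f}$ there is nothing to prove. Otherwise $\mes[C]{f}<\cspr[C]{f}$, and Theorem~\ref{sp-th-nus2} with $m=1$ yields $v\in C\setminus\{0\}$ with $f(v)=\cspr[C]{f}\,v$; hence $\cspr[C]{f}\le\eigenvalspr[C]{f}$, which together with Proposition~\ref{sp-prop-easy} forces $\eigenvalspr[C]{f}=\cspr[C]{f}$. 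Chaining this with \eqref{sp-lem-add32} and $\eigenvalspr[C]{f_a}\le\cspr[C]{f_a}$ gives $\cspr[C]{f}=\eigenvalspr[C]{f}\le\sup_a\eigenvalspr[C]{f_a}\le\sup_a\cspr[C]{f_a}$, which is the desired bound in this case.

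Finally, under $\mes[C]{f}<\cspr[C]{f}$ the equality chain \eqref{sp-lem-add33} is a squeeze: the estimates just assembled read $\cspr[C]{f}=\eigenvalspr[C]{f}\le\sup_a\eigenvalspr[C]{f_a}\le\sup_a\cspr[C]{f_a}\le\cspr[C]{f}$, the last step by the left inequality of \eqref{sp-lem-add31}, so all four quantities coincide. Attainment follows from the same $v$: writing $f(v)=f_{a_v}(v)=\cspr[C]{f}\,v$ shows $\eigenvalspr[C]{f_{a_v}}\ge\cspr[C]{f}$ and then $\cspr[C]{f_{a_v}}\ge\eigenvalspr[C]{f_{a_v}}\ge\cspr[C]{f}$, so both suprema are attained at $a_v$. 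I expect the only substantive step to be the right-hand inequality of \eqref{sp-lem-add31}: everything else is bookkeeping with the selection identity, whereas that bound is exactly where one must first convert $\cspr[C]{f}$ into a genuine eigenvalue via Theorem~\ref{sp-th-nus2} before the selection can be brought to bear.
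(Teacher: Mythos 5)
Your proof is correct and follows essentially the same route as the paper's: the selection identity $f(x)=f_{a_x}(x)$ handles homogeneity, order-preservation, \eqref{sp-lem-add32} and the attainment claim, Lemma~\ref{sp-lem-add2} gives the left inequality of \eqref{sp-lem-add31}, and Theorem~\ref{sp-th-nus2} with $m=1$ supplies the eigenvector that collapses the chain when $\mes[C]{f}<\cspr[C]{f}$. The only (immaterial) difference is order of presentation: the paper derives the right-hand bound of \eqref{sp-lem-add31} as a consequence of \eqref{sp-lem-add33}, whereas you prove it first by the case split on whether $\cspr[C]{f}\leq\mes[C]{f}$.
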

\begin{proof}
For all $\lambda> 0$ and $x\in X$, we get 
$f_a(\lambda x)=\lambda f_a(x)$ for all $a\in\A$, hence
$f(\lambda x)=f_{a_{\lambda x}}(\lambda x)=
\lambda f_{a_{\lambda x}}(x)\leq \lambda f(x)
=\lambda f_{a_x}(x)=f_{a_x}(\lambda x)\leq f(\lambda x)$,
and $f(\lambda x)=\lambda f(x)$. This shows that
$f$ is homogeneous.
Let now $x\in C$ and 
$\lambda\geq 0$ be such that $f(x)=\lambda x$.
Since $(f_a)_{a\in\A}$ admits a upper
selection, there exists $a_x\in\A$ such that 
$f_{a_x}(x)=f(x)=\lambda x$. It follows that
$\lambda\leq \eigenvalspr[C]{f_{a_x}}
\leq \sup_{a\in\A} \eigenvalspr[C]{f_{a}}$.
Since this holds for all $\lambda$ such that $f(x)=\lambda x$ for
some $x\in C$, we deduce~\eqref{sp-lem-add32}.

Now assume that the maps $f_a$ are 
continuous and order-preserving with respect to $C$, that 
$f$ is continuous and that $C$ is a normal cone. 
Let $x,y\in X$ be such that $x\leq y$. We get 
$f_a(x)\leq f_a(y)$ for all $a\in\A$, hence
$f(x)=f_{a_x}(x)\leq f_{a_x}(y)\leq f(y)$, which shows that
$f$ is order-preserving.
Using Lemma~\ref{sp-lem-add2}, we obtain that
$\cspr[C]{f}\geq \cspr[C]{f_a}$ for all $a\in \A$,
which shows the first inequality in~\eqref{sp-lem-add31}.
Combining the previous inequalities with
the fact that $\eigenvalspr[C]{f_a}\leq \cspr[C]{f_a}$ for
all $a\in\A$, we get:
\begin{align*} \eigenvalspr[C]{f}\leq 
\sup_{a\in \A} \eigenvalspr[C]{f_a}
\leq \sup_{a\in \A} \cspr[C]{f_a}\leq\cspr[C]{f} \enspace .
\end{align*}
Then, the equalities in~\eqref{sp-lem-add33} follow 
from Theorem~\ref{sp-th-nus2}.
Moreover, by~\eqref{sp-th-nus21},
there exists 
$x\in C\setminus\{0\}$ such that $f(x)=\eigenvalspr[C]{f} x$.
Then, $f_{a_x}(x)=\eigenvalspr[C]{f} x$, hence
$\eigenvalspr[C]{f} \leq \eigenvalspr[C]{f_{a_x}}$ and
since $\eigenvalspr[C]{f_{a_x}}\leq \cspr[C]{f_{a_x}}
\leq \cspr[C]{f}=\eigenvalspr[C]{f}$, we get that the suprema are
attained in~\eqref{sp-lem-add33}.
Finally, the second inequality in~\eqref{sp-lem-add31}
follows from~\eqref{sp-lem-add33}.
\end{proof}

\begin{proposition}\label{sp-lem-inf}
Let $C$ be a proper cone with non-empty interior in a Banach space $(X,\norm{\cdot})$, 
and let $(f_a)_{a\in \A}$ be a family of maps $X\to X$ that are
homogeneous.
Assume that  $(f_a)_{a\in \A}$ admits a lower selection
and denote by $f$ its infimum.
Then, $f$ is homogeneous and we have:
\begin{align}
\cw[C]{f}&= \inf_{a\in \A} 
\cw[C]{f_a}\enspace .\label{sp-lem-inf1}
\end{align}
Assume in addition that the maps $f_a$ are 
continuous and order-preserving with respect to $C$, that 
$f$ is continuous and that $C$ is a normal cone. Then,
$f$ is order-preserving, and we have:
\begin{align}
\cspr[C]{f}\leq \inf_{a\in \A} \cspr[C]{f_a}
\enspace .\label{sp-lem-inf2}
\end{align}
When, in addition, either $\mes[C]{f}<\cw[C]{f}$
or $f$ is uniformly continuous on bounded sets of $C$,
the following equalities hold:
\begin{align}
\cspr[C]{f}=\inf_{a\in \A} \cspr[C]{f_a}=
\inf_{a\in \A} \cw[C]{f_a} = \cw[C]{f}
\enspace .
\label{sp-lem-inf3}
\end{align}
\end{proposition}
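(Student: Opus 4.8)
The plan is to mirror the structure of the proof of Proposition~\ref{sp-lem-add3}, dualizing each step, and then to close the argument by invoking the Collatz--Wielandt identity of Theorem~\ref{sp-supeigenth0} applied to the infimum $f$ itself. First I would verify that $f$ is homogeneous by the same bracketing used for the supremum: for $\lambda>0$ and $x\in X$, the lower selection gives $f(\lambda x)=\lambda f_{a_{\lambda x}}(x)\geq \lambda f(x)$ and $\lambda f(x)=f_{a_x}(\lambda x)\geq f(\lambda x)$, whence $f(\lambda x)=\lambda f(x)$.

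For~\eqref{sp-lem-inf1}, the inequality $\cw[C]{f}\leq\inf_{a\in\A}\cw[C]{f_a}$ is immediate from $f\leq f_a$ and the monotonicity of the Collatz--Wielandt number (Lemma~\ref{sp-lem-add4}). The reverse inequality is where the lower selection is essential: if $\lambda>0$ and $x\in\Cint$ satisfy $f(x)\leq\lambda x$, then $f_{a_x}(x)=f(x)\leq\lambda x$ with $x\in\Cint$, so $\cw[C]{f_{a_x}}\leq\lambda$ and hence $\inf_{a\in\A}\cw[C]{f_a}\leq\lambda$; taking the infimum over admissible $\lambda$ gives equality. Next, under the additional continuity, order-preservation and normality hypotheses, I would show $f$ is order-preserving by the chain $f(x)\leq f_{a_y}(x)\leq f_{a_y}(y)=f(y)$ for $x\leq y$, where the first inequality uses that $f$ is a pointwise infimum and the second uses that $f_{a_y}$ is order-preserving. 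Inequality~\eqref{sp-lem-inf2} then follows at once from $f\leq f_a$ together with Lemma~\ref{sp-lem-add2} (applied with $g=f$, $h=f_a$), since $f$ is now a continuous, homogeneous, order-preserving self-map of the normal cone $C$.

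Finally, for~\eqref{sp-lem-inf3} I would assemble the chain
\[
\cspr[C]{f}\leq \inf_{a\in\A}\cspr[C]{f_a}\leq \inf_{a\in\A}\cw[C]{f_a}=\cw[C]{f},
\]
where the first step is~\eqref{sp-lem-inf2}, the middle step uses $\cspr[C]{f_a}\leq\cw[C]{f_a}$ (the case $n=1$ of Lemma~\ref{sp-lem-add5}, valid since each $f_a$ is order-preserving and $C$ is normal), and the last equality is~\eqref{sp-lem-inf1}. The proof then closes by applying Theorem~\ref{sp-supeigenth0} to the map $f$: under the alternative $\mes[C]{f}<\cw[C]{f}$ (condition (ii) of that theorem) or the alternative that $f$ is uniformly continuous on bounded sets (condition (i)), the theorem yields $\cspr[C]{f}=\cw[C]{f}$, which forces every inequality in the chain to collapse to an equality.

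The only point that needs care—and the main obstacle—is confirming that $f$ genuinely meets the standing hypotheses of Theorem~\ref{sp-supeigenth0}: continuity of $f$ is assumed, but homogeneity and order-preservation must be established first, as above, and one must check that the two alternative conditions stated for~\eqref{sp-lem-inf3} match exactly the alternatives (i) and (ii) of that theorem. Once this correspondence is verified, the equalities in~\eqref{sp-lem-inf3} are immediate, and no further estimates are required.
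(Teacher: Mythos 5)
Your proof is correct and follows essentially the same route as the paper: the same bracketing for homogeneity and order-preservation, the same two-sided argument for \eqref{sp-lem-inf1} (monotonicity of $\cw[C]{\cdot}$ via Lemma~\ref{sp-lem-add4} in one direction, the lower selection in the other), Lemma~\ref{sp-lem-add2} for \eqref{sp-lem-inf2}, and the chain closed by Lemma~\ref{sp-lem-add5} and Theorem~\ref{sp-supeigenth0} applied to $f$. No gaps.
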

\begin{proof}
The proof of the properties that $f$ is homogeneous of degree 1, 
or order-preserving are identical to that for Lemma~\ref{sp-lem-add3}.
Also, the proof of~\eqref{sp-lem-inf2} is identical to that of
the first inequality in~\eqref{sp-lem-add31}.
{From} Lemma~\ref{sp-lem-add4}, we get that 
$\cw[C]{f}\leq \inf_{a\in \A} \cw[C]{f_a}$.
Let $\lambda>0$ and $x\in \Cint$ be such that $f(x)\leq \lambda x$.
By asumption, there exists $a_x\in\A$ such that
$f_{a_x} (x)=f(x)\leq \lambda x$,  hence $\lambda\geq 
\cw[C]{f_{a_x}}\geq \inf_{a\in \A} \cw[C]{f_a}$.
since this holds for all $\lambda>0$ and $x\in \Cint$ 
such that $f(x)\leq \lambda x$, we deduce that 
$\cw[C]{f}\geq \inf_{a\in \A} \cw[C]{f_a}$,
which shows~\eqref{sp-lem-inf1}.
Finally, the equalities in~\eqref{sp-lem-inf3} follow
from~\eqref{sp-lem-inf1},~\eqref{sp-lem-inf2},
Lemma~\ref{sp-lem-add5} and Theorem~\ref{sp-supeigenth0}.
\end{proof}

\nocite{goebel,sadovski}
\newcommand{\etalchar}[1]{$^{#1}$}
\def\cprime{$'$} \def\cprime{$'$}

\end{document}